\documentclass[10pt,reqno]{amsart}
\usepackage[foot]{amsaddr}

\usepackage[utf8]{inputenc}
\usepackage[english]{babel}

\usepackage[left=2cm,right=2cm,top=2cm,bottom=2cm]{geometry}
\setlength{\parindent}{0pt}
\setlength{\parskip}{0.5\baselineskip}

\usepackage{enumitem}
\setlist[itemize]{leftmargin=2em}

\usepackage{graphics}
\usepackage{float}
\usepackage{color}


\numberwithin{equation}{section}

\usepackage{amsmath}	
\usepackage{amssymb}	
\usepackage{mathtools}	

\usepackage[capitalise,noabbrev]{cleveref}
\crefname{equation}{Eq.\!\!}{Eq.\!\!}
\crefname{definition}{D.\!\!}{D.\!\!}
\crefname{lemma}{L.\!\!}{L.\!\!}
\crefname{proposition}{P.\!\!}{P.\!\!}
\crefname{theorem}{T.\!\!}{T.\!\!}
\crefname{corollary}{C.\!\!}{C.\!\!}
\crefname{example}{E.\!\!}{E.\!\!}
\crefname{remark}{R.\!\!}{R.\!\!}
\crefname{assumption}{A.\!\!}{A.\!\!}
\crefname{problem}{P.\!\!}{P.\!\!}


\newtheorem{definition}{Definition}[section]
\newtheorem{lemma}[definition]{Lemma}

\newtheorem{theorem}[definition]{Theorem}
\newtheorem{corollary}[definition]{Corollary}

\newtheorem{assumption}[definition]{Assumption}
\newtheorem{problem}[definition]{Problem}


\usepackage{ifthen}
\newcommand{\ifNotEmpty}[2]{\ifthenelse{\equal{#1}{}}{}{#2}}

\newcounter{repeatCharCounter}
\newcommand{\repeatChar}[2]{\setcounter{repeatCharCounter}{0}\whiledo{\value{repeatCharCounter}<#1}{#2\stepcounter{repeatCharCounter}}}


\renewcommand{\phi}{\varphi}
\renewcommand{\ell}{l}
\renewcommand{\epsilon}{\varepsilon}
\renewcommand{\*}{\hspace{-0.15em}\cdot\hspace{-0.15em} }	


\newcommand{\cleq}{\lesssim}
\newcommand{\cgeq}{\gtrsim}
\newcommand{\ceq}{\eqsim}

\renewcommand{\(}{\bigg(}
\renewcommand{\)}{\bigg)}
\renewcommand{\[}{\bigg[}
\renewcommand{\]}{\bigg]}


\newcommand{\set}[1]{\{#1\}}

\newcommand{\Set}[2]{\{#1\,|\,#2\}}

\newcommand{\N}{\mathbb{N}}
\newcommand{\Z}{\mathbb{Z}}

\newcommand{\R}{\mathbb{R}}
\newcommand{\C}{\mathbb{C}}

\newcommand{\complexI}{\mathbf{i}}


\newcommand{\Ball}[3][]{\mathrm{Ball}_{#1}(#2,#3)}

\newcommand{\Bubbles}[1]{\Omega_{#1}}

\newcommand{\closureN}[1]{\overline{#1}}

\newcommand{\inflateN}[3][]{{#2}^{#3}_{#1}}
\newcommand{\inflate}[3][]{(#2)^{#3}_{#1}}

\newcommand{\cardN}[1]{\# #1}

\newcommand{\meas}[2][]{|#2|_{#1}}
\newcommand{\measB}[2][]{\bigg|#2\bigg|_{#1}}

\newcommand{\diam}[2][]{\mathrm{diam}_{#1}(#2)}

\newcommand{\dist}[3][]{\mathrm{dist}_{#1}(#2,#3)}


\newcommand{\supp}[1]{\mathrm{supp}(#1)}

\newcommand{\restrictN}[2]{#1|_{#2}}
\newcommand{\restrict}[2]{(#1)|_{#2}}

\newcommand{\fDef}[3]{#1: #2 \longrightarrow #3}
\newcommand{\fDefB}[5][]{
\ifNotEmpty{#1}{#1:}
\left\{
\begin{array}{ccc}
#2 & \longrightarrow & #3 \\
#4 & \longmapsto & #5
\end{array}
\right.
}

\newcommand{\case}[4][]{
\left\{
\begin{array}{cl}
#2 & \text{if} \,\, #3 \\
#4 & \ifthenelse{\equal{#1}{}}{\text{else}}{\text{if} \,\, #1}
\end{array}
\right.}


\newcommand{\identity}{\mathrm{id}}
\newcommand{\charFunc}[1]{\mathbb{I}_{#1}}
\newcommand{\kronecker}[1]{\delta_{#1}}

\newcommand{\Landau}[1]{\mathcal{O}(#1)}

\let\oldprime\prime

\renewcommand{\prime}[2]{(#2)^{\repeatChar{#1}{\oldprime}}}

\newcommand{\diffN}[2]{#2^{(#1)}}
\newcommand{\diff}[2]{(#2)^{(#1)}}

\let\oldpartial\partial
\newcommand{\partialN}[3][]{\oldpartial_{#2}^{\ifNotEmpty{#1}{(#1)}} #3}
\renewcommand{\partial}[3][]{\oldpartial_{#2}^{\ifNotEmpty{#1}{(#1)}}(#3)}

\newcommand{\DN}[3][]{\mathrm{D}_{#1}^{#2} #3}
\newcommand{\D}[3][]{\mathrm{D}_{#1}^{#2}(#3)}

\newcommand{\LaplaceN}[2][]{\Delta_{#1} #2}

\newcommand{\I}[4][]{\int\displaylimits_{#2}^{#1} #3 \,\mathrm{d}#4}


\newcommand{\abs}[1]{|#1|}

\newcommand{\seminorm}[2][]{|#2|_{#1}}

\newcommand{\norm}[2][]{\|#2\|_{#1}}

\newcommand{\skalar}[3][]{\langle#2,#3\rangle_{#1}}

\newcommand{\bilinear}[3][]{#1(#2,#3)}


\renewcommand{\ker}[1]{\mathrm{ker}(#1)}

\newcommand{\ran}[1]{\mathrm{ran}(#1)}

\newcommand{\rank}[1]{\mathrm{rank}(#1)}


\newcommand{\dual}[1]{(#1)^{\oldprime}}

\newcommand{\mvemph}[1]{\boldsymbol{#1}}

\newcommand{\spanN}[1]{\mathrm{span}\,#1}

\newcommand{\dimN}[1]{\mathrm{dim}\,#1}
\renewcommand{\dim}[1]{\mathrm{dim}(#1)}


\newcommand{\Schwartz}[1][]{\mathcal{S}\ifNotEmpty{#1}{(#1)}}
\newcommand{\SchwartzO}[1]{\mathcal{S}^{#1}_0}


\newcommand{\Ck}[2]{C^{#1}(#2)}

\newcommand{\CkO}[2]{C^{#1}_0(#2)}

\newcommand{\lp}[2]{\ell^{#1}(#2)}
\newcommand{\Lp}[2]{L^{#1}(#2)}
\newcommand{\LpLoc}[2]{L^{#1}_{\mathrm{loc}}(#2)}

\newcommand{\Hk}[2]{H^{#1}(#2)}

\newcommand{\HkLoc}[2]{H^{#1}_{\mathrm{loc}}(#2)}

\newcommand{\Wkp}[3]{W^{#1,#2}(#3)}

\newcommand{\WkpLoc}[3]{W^{#1,#2}_{\mathrm{loc}}(#3)}

\newcommand{\Pp}[2]{\mathbb{P}^{#1}(#2)}

\newcommand{\VHarm}[1]{V_{\mathrm{harm}}(#1)}


\newcommand{\Fourier}[1]{\mathcal{F}(#1)}

\newcommand{\FourierInvN}[1]{\mathcal{F}^{-1} #1}
\newcommand{\FourierInv}[1]{\mathcal{F}^{-1}(#1)}

\newcommand{\FourierHatN}[1]{\widehat{#1}}

\newcommand{\FourierInvHatN}[1]{\check{#1}}	

\newcommand{\Elements}{\mathcal{T}}

\newcommand{\hMin}[1]{h_{\min\ifNotEmpty{#1}{,}#1}}
\newcommand{\hMax}[1]{h_{\max\ifNotEmpty{#1}{,}#1}}



\newcommand{\Tree}[2][]{\mathbb{T}_{#2}^{\ifNotEmpty{#1}{(}#1\ifNotEmpty{#1}{)}}}

\newcommand{\depth}[1]{\mathrm{depth}(#1)}

\newcommand{\BPart}{\mathbb{P}}
\newcommand{\BPartAdm}{\mathbb{P}_{\mathrm{adm}}}
\newcommand{\BPartSmall}{\mathbb{P}_{\mathrm{small}}}
\newcommand{\HMatrices}[2]{\mathcal{H}(#1,#2)}

\newcommand{\CCard}{\sigma_{\mathrm{card}}}     
\newcommand{\CAdm}{\sigma_{\mathrm{adm}}}       
\newcommand{\CSmall}{\sigma_{\mathrm{small}}}   
\newcommand{\CSparse}[1]{C_{\mathrm{sparse}}(#1)}   
\newcommand{\CExp}{\sigma_{\mathrm{exp}}}       
\newcommand{\CSSCO}{\sigma_{\mathrm{sco}}}		


\newcommand{\CutoffFc}[2]{\kappa_{#1}^{#2}}
\newcommand{\CutoffOp}[2]{K_{#1}^{#2}}
\newcommand{\CoarseningOp}[2]{Q_{#1}^{#2}}

\begin{document}

\title{$\mathcal{H}$-inverses for RBF interpolation}
\author[N. Angleitner, M. Faustmann, J.M. Melenk]{Niklas Angleitner, Markus Faustmann, Jens Markus Melenk}
\address{Technische Universit\"at Wien, Institute of Analysis and Scientific Computing (Inst. E 101), Wiedner Hauptstrasse 8-10, A-1040 Wien, Austria}
\email{niklas.angleitner@tuwien.ac.at}
\date{\today}
\subjclass[2010]{Primary: 65F50, Secondary: 65F30}   
\keywords{Radial basis function interpolation, H-matrices, Approximability, Non-uniform point clouds}

\begin{abstract}
We consider the interpolation problem for a class of radial basis functions (RBFs) that includes the classical polyharmonic splines (PHS). We show that the inverse of the system matrix for this interpolation problem can be approximated at an exponential rate in the block rank in the $\mathcal{H}$-matrix format if the block structure of the $\mathcal{H}$-matrix arises from a standard clustering algorithm.
\end{abstract}

\maketitle


\section{Introduction}

Radial basis functions (RBFs) have become an important tool in computational mathematics. Starting from the general question of interpolating scattered data, they found their way into statistics applications (see, e.g., \cite{Wahba90,GS94} or \cite{BL88} for application in machine learning) and, as a specific instance of meshfree methods, into the realm of numerical methods for partial differential equations \cite{MR2060191,Wendland_Scattered}.

The analysis of the approximation properties of a variety of RBFs is by now rather mature, \cite{MR1997878,MR2060191,Wendland_Scattered}. The classical interpolation problem with RBFs leads to linear systems with fully populated system matrices, which brings their efficient solution to the fore. A basic question in this connection is that of an efficient representation of the system matrix. Many RBFs such as polyharmonic splines, multiquadrics, and Gaussians are ``asymptotically smooth'' so that the system matrix can very efficiently be approximated by blockwise low rank matrices, \cite{MR1618780,MR2337575} and \cite{MR3707897,MR3779519}. This observation opened the door for log-linear complexity matrix-vector multiplication and a subsequent iterative solution. Nevertheless, good preconditioning strategies are necessary. Domain decomposition techniques \cite{MR1813294,MR3913657}, possibly combined with multilevel techniques \cite{MR2060191,MR4190812}, are an option. A recent alternative is the use of the arithmetic that comes with rank-structured matrices. Here, we consider specifically $\mathcal{H}$-matrices, \cite{Hackbusch_Hierarchical_matrices}. These are blockwise low-rank matrices endowed with a hierarchical structure that leads to an (approximate) arithmetic including addition, multiplication, inversion, and LU-factorization in logarithmic-linear complexity. This arithmetic can thus be used as a direct solver or be employed to create preconditioners. The arithmetic is only approximate but the error can be controlled by either a priori chosen parameters \cite{GH03} or adaptively \cite{BR03,Grasedyck05}. Yet, a basic question remains whether the target, i.e., the inverse of the system matrix or the LU-factorization, can be represented in the $\mathcal{H}$-matrix format. 
This question has been answered for finite element discretizations of elliptic PDEs in \cite{BebendorfHackbusch03}, recently improved in \cite{Faustmann_H_matrices_FEM} (to arbitrary accuracy) and \cite{Angleitner_H_matrices_FEM} (locally refined meshes), as well as for boundary element discretizations, \cite{FMP16,FMP17}, and the coupling of finite elements and boundary elements, \cite{FMP21}.

It is the purpose of the present paper to provide such an approximation result in \cref{Main_result} for RBFs that are fundamental solutions of certain partial differential operators with constant coefficients; in particular, the popular polyharmonic splines, introduced and analyzed in \cite{Duchon_Interpolation}, are covered by the present paper (see \cref{Fundamental_solution_Ex_2}).
Consequently, see \cite{Bebendorf07,Faustmann_H_matrices_FEM}, one also obtains the existence of $LU$-decompositions in the $\mathcal{H}$-matrix format, which gives mathematical underpinning to their  observed good performance in black-box preconditioning, \cite{MR3952680,MR4190812}.

The present paper is structured as follows: In \cref{Sec:Main_results} we formulate our model problem, the so called \emph{interpolation problem} together with the corresponding \emph{native space} $(V,\bilinear[a]{\cdot}{\cdot})$.  We then reformulate the problem as a linear system of equations (LSE) and introduce the \emph{interpolation matrix}. Moreover, some basic definitions concerning $\mathcal{H}$-matrices are given and finally, we state the main result, \cref{Main_result}. 
\cref{Sec:Proof} is devoted to the proof of our main result: First, in \cref{SSec:Native_space}, we investigate the native space $V$ and introduce the \emph{homogeneous native space} $V_0 \subseteq V$. In \cref{SSec:Interpol_problem_2}, we reformulate the original interpolation problem with an orthogonality condition in terms of $\bilinear[a]{\cdot}{\cdot}$ and the space $V_0$. In \cref{SSec:Rep_formula}, \cref{Rep_formula}, we derive a representation formula for the inverse system matrix to reduce the original ``matrix approximation problem'' to a ``function approximation problem''. Then, the main step in the proof is the construction of low dimensional spaces from which solutions to the interpolation problem can be approximated at an exponential rate, \cref{SSec:Coarse_ops}. 
Finally, \cref{Sec:Numerical_examples} provides some numerical examples.

Concerning notation: We write ``$a \cleq b$'' iff there exists a constant $C>0$ such that $a \leq C b$. The constant might depend on the space dimension $d$, the orders $k$ and $k_{\min}$ of the native space $V$, the coefficients $\sigma_l$ of the bilinear form $\bilinear[a]{\cdot}{\cdot}$, et cetera. However, it does \emph{not} depend on critical parameters such as the problem size $N$. We write $a \ceq b$, if both $a \cleq b$ and $a \cgeq b$ are satisfied. Matrices and vectors in linear systems of equations are expressed in boldface letters, e.g., $\mvemph{A} \in \R^{N \times N}$ and $\mvemph{f} \in \R^N$. The only exception to this rule is the set $\mvemph{C} \subseteq \R^N$ introduced in \cref{Coeff_set_C}, from which coefficient vectors $\mvemph{c} \in \mvemph{C}$ are drawn. For all $x \in \R^d$ and $r > 0$, we write $\Ball{x}{r} := \Set{y \in \R^d}{\norm[2]{y-x} < r}$ for the Euclidean ball of radius $r$, centered at $x$. The diameter of a set $\Omega \subseteq \R^d$ is denoted by $\diam[2]{\Omega} := \sup_{x,y \in \Omega} \norm[2]{x-y}$ and the distance between two sets $\Omega_1,\Omega_2 \subseteq \R^d$ is given by $\dist[2]{\Omega_1}{\Omega_2} := \inf_{x_1 \in \Omega_1, x_2 \in \Omega_2} \norm[2]{x_2-x_1}$. To treat derivatives in all space dimensions $d \geq 1$ alike, we adopt the usual multi-index notation $\DN{\alpha}{} := \partialN[\alpha_d]{d}{} \circ \dots \circ \partialN[\alpha_1]{1}{}$ and $\abs{\alpha} := \alpha_1+\dots+\alpha_d$, where $\alpha = (\alpha_1,\dots,\alpha_d) \in \N_0^d$.  We frequently drop the supplement ``for all $\alpha \in \N_0^d$'' and use shorthand phrases like ``for all $\abs{\alpha}=k$'' to describe the set $\Set{\alpha \in \N_0^d}{\abs{\alpha}=k}$.

As for specific function spaces, we work with the following definitions (all function spaces are meant to be real-valued): Let $d \geq 1$ and $\Omega \subseteq \R^d$ be an open set. For all $p \in \N_0$, we use the polynomial space $\Pp{p}{\Omega} := \spanN{\Set{x^{\alpha}}{\abs{\alpha} \leq p}}$, and we also write $\Pp{-1}{\Omega} := \set{0}$. We write $\Lp{p}{\Omega}$, $p \in [1,\infty]$, for the classical Lebesgue spaces. Moreover,  
a measurable function $\fDef{f}{\Omega}{\R}$ belongs to the space $\LpLoc{p}{\Omega}$, iff it satisfies $\restrictN{f}{\omega} \in \Lp{p}{\omega}$ for all bounded open sets $\omega \subseteq \R^d$ satisfying $\closureN{\omega} \subseteq \Omega$. For all $k \in \N_0$ and $p \in [1,\infty]$, the Sobolev space $\Wkp{k}{p}{\Omega}$ consists of all $k$-times weakly differentiable functions $f \in \LpLoc{1}{\Omega}$, whose weak derivatives $(\DN{\alpha}{f})_{\abs{\alpha} \leq k}$ lie in $\Lp{p}{\Omega}$. Similarly, a function $f \in \LpLoc{1}{\Omega}$ belongs to the space $\WkpLoc{k}{p}{\Omega}$, if it is $k$-times weakly differentiable and if there holds $\DN{\alpha}{f} \in \LpLoc{p}{\Omega}$ for all $\abs{\alpha} \leq k$. In the case $p=2$, we write $\Hk{k}{\Omega} := \Wkp{k}{2}{\Omega}$ and $\HkLoc{k}{\Omega} := \WkpLoc{k}{2}{\Omega}$.

\section{Main results} \label{Sec:Main_results}
\subsection{The interpolation problem} \label{SSec:Interpol_problem_1}

We start our presentation with the definition of the \emph{native space $V$}, which forms the basis of our functional-analytic framework:
\begin{definition} \label{Native_space}
Let $d,k \in \N$ with $k>d/2$. Furthermore, let $k_{\min} \in \set{0,\dots,k}$ and $\sigma_{k_{\min}},\dots,\sigma_k \geq 0$ be given coefficients with $\sigma_{k_{\min}},\sigma_k > 0$. We define the \emph{native space $V$} and the polynomial space $P$:
\begin{equation*}
\begin{array}{rcl}
V &:=& \Set{v \in \LpLoc{1}{\R^d}}{\forall l \in \set{k_{\min},\dots,k}: \forall \abs{\alpha}=l: \DN{\alpha}{v} \in \Lp{2}{\R^d}}, \\
P &:=& \Pp{k_{\min}-1}{\R^d}.
\end{array}
\end{equation*}
In other words, a function $v \in \LpLoc{1}{\R^d}$ belongs to $V$, if, for all $l \in \set{k_{\min},\dots,k}$ and all $\abs{\alpha}=l$, the $\alpha$-th weak derivative $\DN{\alpha}{v} \in \LpLoc{1}{\R^d}$ exists and lies in $\Lp{2}{\R^d}$. For all $u,v \in V$, we define
\begin{equation*}
\bilinear[a]{u}{v} := \sum_{l=k_{\min}}^{k} \sigma_l \sum_{\abs{\alpha}=l} \frac{l!}{\alpha!} \skalar[\Lp{2}{\R^d}]{\DN{\alpha}{u}}{\DN{\alpha}{v}}, \quad \quad \quad \seminorm[a]{v} := \sqrt{\bilinear[a]{v}{v}}.
\end{equation*}

Furthermore, for every open subset $\Omega \subseteq \R^d$, we set
\begin{equation*}
\seminorm[a,\Omega]{v} := \( \sum_{l=k_{\min}}^{k} \sigma_l \sum_{\abs{\alpha}=l} \frac{l!}{\alpha!} \norm[\Lp{2}{\Omega}]{\DN{\alpha}{u}}^2 \)^{1/2}.
\end{equation*}

\end{definition}

Note that the assumption $k>d/2$ guarantees that the Sobolev embedding $\Hk{k}{\Omega} \subseteq \Ck{0}{\closureN{\Omega}}$ is continuous for all bounded Lipschitz domains $\Omega \subseteq \R^d$ as well as $\Omega = \R^d$ (e.g., \cite[Chapter 7]{Gilbarg_Trudinger}). In particular, $\norm[\Ck{0}{\closureN{\Omega}}]{v} \leq C(d,k,\Omega) \norm[\Hk{k}{\Omega}]{v}$, for all $v \in \Hk{k}{\Omega}$. We will make use of this fact on several occasions throughout this work.

For the basic properties of $V$, $P$, $\bilinear[a]{\cdot}{\cdot}$ and $\seminorm[a]{\cdot}$, we refer the reader to \cref{Native_space_Props} in the proof section below. In brief, $\bilinear[a]{\cdot}{\cdot}$ defines a symmetric positive semi-definite bilinear form on $V$ and $\seminorm[a]{\cdot}$ is a seminorm with kernel $P$. There holds $V \subseteq \Ck{0}{\R^d}$, so that every $v \in V$ has well-defined point-values $v(x)$, $x \in \R^d$. In the extreme case $k_{\min} = k$, there holds $V = \mathrm{BL}^k(\R^d)$, which is the well-known \emph{homogeneous Sobolev space} or \emph{Beppo-Levi space} (e.g., \cite{Deny_Lions_Beppo_Levi}, \cite{Specovius_Density}). In the other extreme case $k_{\min} = 0$, however, there holds $V = \Hk{k}{\R^d}$, the standard Sobolev space. The norms are equivalent and the polynomial space becomes trivial, $P = \set{0}$. In particular, $(V,\bilinear[a]{\cdot}{\cdot})$ is then a proper Hilbert space.

\begin{definition} \label{Interpol_points}
Let $N_{\min} := \dimN{P} = \binom{d+k_{\min}-1}{d}$. For each $N \in \N$ with $N \geq N_{\min}$, let $\set{x_1,\dots,x_N} \subseteq \R^d$ be a set of pairwise distinct \emph{interpolation points}. We denote their \emph{separation distance} by
\begin{equation*}
\hMin{N} := \hMin{} := \frac{1}{2} \min_{n \neq m \in \set{1,\dots,N}} \norm[2]{x_n-x_m}.
\end{equation*}

We make the following assumptions about the family $(\set{x_1,\dots,x_N})_{N \geq N_{\min}}$:
\begin{enumerate}
\item \emph{Boundedness:} There exists a constant $C>0$, such that
\begin{equation*}
\forall N \geq N_{\min}: \forall n \in \set{1,\dots,N}: \quad \quad \norm{x_n} \leq C.
\end{equation*}

\item \emph{Unisolvent subset:} There exists a set of points $\Set{\xi_{\alpha}}{\abs{\alpha}<k_{\min}} \subseteq \R^d$, unisolvent for the space $P$, such that
\begin{equation*}
\forall N \geq N_{\min}: \quad \quad \Set{\xi_{\alpha}}{\abs{\alpha}<k_{\min}} \subseteq \set{x_1,\dots,x_N}.
\end{equation*}

\item \emph{Balance $N \leftrightarrow \hMin{}$:} There exist constants $C,\CCard \geq 1$, such that
\begin{equation*}
\forall N \geq N_{\min}: \quad \quad 1 \leq C N^{\CCard} \hMin{N}^d.
\end{equation*}
\end{enumerate}

\end{definition}

During the proof of \cref{Space_VBDL}, the assumption of boundedness will allow us to apply some Poincar\'e-type inequality on a fixed, bounded subset of $\R^d$, which is independent of the problem size $N$. Furthermore, as a minor technical detail, it guarantees that $\hMin{N} \cleq 1$ for all $N \geq N_{\min}$.

The unisolvency assumption, on the other hand, allows us to make the following implication: If a polynomial $p \in P$ satisfies $p(\xi_{\alpha})=0$ for all $\abs{\alpha}<k_{\min}$, then already $p=0$. This argument will be used on numerous occasions. Note that \emph{all} sets of interpolation points $\set{x_1,\dots,x_N}$, $N \geq N_{\min}$, must contain \emph{the same} set of unisolvent points $\Set{\xi_{\alpha}}{\abs{\alpha} < k_{\min}}$. This assumption is necessary to ensure that the aforementioned Poincar\'e constant in \cref{Space_VBDL} is independent of the problem size $N$. Clearly, if the family $(\set{x_1,\dots,x_N})_{N \geq N_{\min}}$ is constructed by an algorithm that successively adds more points to some initial point set, never deleting or modifying existing ones, then this assumption is satisfied.

The asserted balance between the problem size $N$ and the separation distance $\hMin{N}$ is fulfilled for a wide variety of families of interpolation points. As an example, consider the case where a bounded domain $\Omega \subseteq \R^d$ is given and where $\set{x_1,\dots,x_N} \subseteq \Omega$ for all $N \geq N_{\min}$. Denote by $\Elements_N := \set{T_1,\dots,T_N}$ the corresponding Voronoi decomposition of $\Omega$, i.e., $T_n = \Set{x \in \Omega}{\norm{x-x_n} = \min_{m \in \set{1,\dots,N}} \norm{x-x_m}}$, and by $\hMax{N} := \max_{n \in \set{1,\dots,N}} \diam[2]{T_n}$ the maximal cell diameter. Now suppose that
\begin{equation*}
\hMax{N}^{\CCard} \leq C\hMin{N},
\end{equation*}
which is satisfied, e.g., for \emph{uniform} and \emph{algebraically graded} families of interpolation points. Then,
\begin{equation*}
1 \cleq \meas{\Omega}^{\CCard} = \(\sum_{n=1}^{N} \meas{T_n}\)^{\CCard} \leq \(\sum_{n=1}^{N} \diam[2]{T_n}^d\)^{\CCard} \leq (N \hMax{N}^d)^{\CCard} \leq C^d N^{\CCard} \hMin{N}^d.
\end{equation*}

Although \cref{Interpol_points} is phrased in terms of a \emph{family} of interpolation points, most of the upcoming results deal with a single set $\set{x_1,\dots,x_N} \subseteq \R^d$ of interpolation points. For the most part, one can therefore think of $N$ as being ``fixed'' throughout this work.

\begin{definition} \label{Eval_op}
The \emph{evaluation operator} $\fDef{E_N}{\Ck{0}{\R^d}}{\R^N}$ is defined by $E_N v := (v(x_n))_{n=1}^{N}$ for all $v \in \Ck{0}{\R^d}$.
\end{definition}

Recall that $V \subseteq \Ck{0}{\R^d}$, so that $E_N v$ is well-defined for all $v \in V$.

Now that the native space $V$ and the interpolation points $x_1,\dots,x_N$ are fixed, let us pose the \emph{interpolation problem}:
\begin{problem} \label{Interpol_problem_1}
Let $f \in V$. Find a function $u \in V$ that satisfies the following interpolation and minimization properties:
\begin{equation*}
E_N u = E_N f, \quad \quad \quad \seminorm[a]{u} \leq \inf_{\substack{\tilde{u} \in V: \\ E_N \tilde{u} = E_N f}} \seminorm[a]{\tilde{u}}.
\end{equation*}
\end{problem}

Clearly, the interpolation condition ``$E_N u = E_N f$'' can also be written as ``$\forall n \in \set{1,\dots,N}: u(x_n) = f(x_n)$''. In other words, we are looking for a minimizer $u$ of the seminorm $\seminorm[a]{\cdot}$ over the set of interpolants of $f$. While the interpolation conditions fix the values of $u$ at the interpolation points $x_1,\dots,x_N$, the minimization property determines the behavior of $u$ on the rest of $\R^d$.

In \cref{SSec:Interpol_problem_2} below we will see that, for every given $f \in V$, this interpolation problem has a unique solution $u \in V$. It turns out that the mapping $f \mapsto u$ is linear and that there holds the a priori bound $\seminorm[a]{u} \leq \seminorm[a]{f}$, i.e., the problem is well-posed.

\subsection{The fundamental solution} \label{SSec:Fundamental_solution}

At first glance, for given $f \in V$, the solution $u \in V$ of \cref{Interpol_problem_1} looks like an infinite-dimensional object. However, since the set of interpolants $\tilde{u} \in V$ of $f$ is so large, the minimization property contains a lot of information about $u$. In fact, we shall shortly see that $u$ can be written in the form $u = \sum_{n=1}^{N} \mvemph{c}_n \phi_n + \sum_{\abs{\alpha}<k_{\min}} \mvemph{d}_{\alpha} \pi_{\alpha}$, where $\mvemph{c}_n, \mvemph{d}_{\alpha} \in \R$ are certain coefficients and where $\Set{\pi_{\alpha}}{\abs{\alpha}<k_{\min}} \subseteq P$ is a polynomial basis. The functions $\phi_n \in \Ck{0}{\R^d}$, on the other hand, have the particularly simple structure of translates, i.e., $\phi_n = \phi(\cdot-x_n)$. Here, $x_n \in \R^d$ is the $n$-th interpolation point and $\phi \in \Ck{0}{\R^d}$ is a specific function that is intimately linked to the native space $(V,\bilinear[a]{\cdot}{\cdot})$ from \cref{Native_space}. More precisely, $\phi$ is a fundamental solution of the differential operator that is associated with the bilinear form $\bilinear[a]{\cdot}{\cdot}$. The above representation will then allow us to rephrase \cref{Interpol_problem_1} as a linear system of equations (LSE) for the coefficients $\mvemph{c}_n, \mvemph{d}_{\alpha} \in \R$.

Before we treat the general setting $k_{\min} \geq 0$, let us have a look at the much simpler case $k_{\min} = 0$ first. Recall that then $V = \Hk{k}{\R^d}$ with equivalent norms. 
\begin{lemma} \label{Fundamental_solution_Ex_0}
If $k_{\min} = 0$, then there exists a unique function $\phi \in V$, such that the following equality holds true:
\begin{equation*}
\forall x_0 \in \R^d: \forall v \in V: \quad \quad \bilinear[a]{\phi(\cdot-x_0)}{v} = v(x_0).
\end{equation*}
\end{lemma}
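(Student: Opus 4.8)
The plan is to identify $\bilinear[a]{\cdot}{\cdot}$ as a genuine inner product on $V$, apply the Riesz representation theorem to the point-evaluation functional, and then use translation invariance to reduce to a single representative.

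First I would recall (from the basic properties of the native space, \cref{Native_space_Props}) that when $k_{\min}=0$ we have $V=\Hk{k}{\R^d}$, the seminorm $\seminorm[a]{\cdot}$ is in fact a norm, equivalent to the standard $\Hk{k}{\R^d}$-norm, and $(V,\bilinear[a]{\cdot}{\cdot})$ is a Hilbert space; on the Fourier side the equivalence rests on the identity $\sum_{\abs{\alpha}=k}\frac{k!}{\alpha!}\abs{\xi^\alpha}^2=\abs{\xi}^{2k}$ together with $\sigma_0,\sigma_k>0$. Since $k>d/2$, the Sobolev embedding $V\subseteq\Ck{0}{\R^d}$ is continuous, so for each fixed $x_0\in\R^d$ the functional $v\mapsto v(x_0)$ is linear and bounded on $(V,\bilinear[a]{\cdot}{\cdot})$. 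By the Riesz representation theorem there is therefore a unique $\phi_{x_0}\in V$ with $\bilinear[a]{\phi_{x_0}}{v}=v(x_0)$ for all $v\in V$.

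Next I would pin down the dependence on $x_0$. Writing $\tau_y w:=w(\cdot-y)$ for $y\in\R^d$, one has $\tau_y V=V$ and, since weak differentiation commutes with translation and $\skalar[\Lp{2}{\R^d}]{\cdot}{\cdot}$ is translation invariant, $\bilinear[a]{\tau_y u}{\tau_y v}=\bilinear[a]{u}{v}$ for all $u,v\in V$. Put $\phi:=\phi_0$. Then, for any $x_0\in\R^d$ and $v\in V$, setting $w:=\tau_{-x_0}v\in V$,
\begin{equation*}
\bilinear[a]{\phi(\cdot-x_0)}{v}=\bilinear[a]{\tau_{x_0}\phi}{\tau_{x_0}w}=\bilinear[a]{\phi}{w}=w(0)=v(x_0),
\end{equation*}
so $\phi(\cdot-x_0)$ is the Riesz representative of $v\mapsto v(x_0)$; this proves existence. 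For uniqueness, if $\phi,\tilde\phi\in V$ both satisfy the asserted identity, then evaluating at $x_0=0$ gives $\bilinear[a]{\phi-\tilde\phi}{v}=0$ for all $v\in V$, and the choice $v=\phi-\tilde\phi$ yields $\seminorm[a]{\phi-\tilde\phi}=0$, hence $\phi=\tilde\phi$ because $\seminorm[a]{\cdot}$ is a norm on $V$ in this case.

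I do not expect any real difficulty here: the only ingredient needing care is the coercivity of $\bilinear[a]{\cdot}{\cdot}$ on $\Hk{k}{\R^d}$ (equivalently, that $\seminorm[a]{\cdot}$ controls the full $\Hk{k}{\R^d}$-norm), which is precisely what legitimizes the use of Riesz, and this is already supplied by the native-space properties. The substantive case is $k_{\min}\geq 1$, where $\seminorm[a]{\cdot}$ has the nontrivial kernel $P$ and a fundamental solution must instead be characterized modulo polynomials; that is treated separately in the sequel.
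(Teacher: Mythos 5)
Your proposal is correct and follows essentially the same route as the paper: Riesz--Fr\'echet applied to the point evaluation at the origin (justified by the continuous embedding $\Hk{k}{\R^d} \subseteq \Ck{0}{\R^d}$ and the norm equivalence when $k_{\min}=0$), followed by translation invariance of $\bilinear[a]{\cdot}{\cdot}$ to handle general $x_0$. Your spelled-out uniqueness argument and the explicit substitution $w = \tau_{-x_0} v$ merely make precise what the paper compresses into ``a simple integral transformation.''
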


\begin{proof}
The continuous Sobolev embedding $\Hk{k}{\R^d} \subseteq \Ck{0}{\R^d}$ guarantees that the linear form $v \mapsto v(0)$ is continuous. Then, according to the Riesz-Fr\'echet Representation Theorem, there exists a unique function $\phi \in V$, such that $\bilinear[a]{\phi}{v} = v(0)$ for all $v \in V$. Since the coefficients $\sigma_l l!/\alpha!$ of $\bilinear[a]{\cdot}{\cdot}$ are spatially constant, a simple integral transformation yields the desired formula for general $x_0 \in \R^d$.
\end{proof}

Now, given data $f \in V$, we make the ansatz $u := \sum_{n=1}^{N} \mvemph{c}_n \phi(\cdot-x_n) \in V$ for the solution of \cref{Interpol_problem_1}. The coefficients $\mvemph{c}_n \in \R$ can be chosen such that the interpolation conditions $E_N u = E_N f$ are satisfied (cf. \cref{LSE_cd_to_u}). On the other hand, for all $v \in V$ with $E_N v = \mvemph{0}$, the defining equation of $\phi$ tells us that $\bilinear[a]{u}{v} = \sum_{n=1}^{N} \mvemph{c}_n v(x_n) = 0$. In \cref{Interpol_problems_equivalent} further below, it will be argued in more detail that this orthogonality implies the required minimization property. We conclude that $u = \sum_{n=1}^{N} \mvemph{c}_n \phi(\cdot-x_n)$ is indeed the unique solution of \cref{Interpol_problem_1}.

Now let us return to the general case $k_{\min} \geq 0$. Since $\bilinear[a]{\cdot}{\cdot}$ need not be \emph{strictly} positive definite, the existence of a ``basis function'' $\phi$ is not so straightforward any more. Therefore, we have to take a different approach.
\begin{definition} \label{Diff_op}
Denote by $\sigma_{k_{\min}},\dots,\sigma_k \geq 0$ the coefficients of the bilinear form $\bilinear[a]{\cdot}{\cdot}$ from \cref{Native_space}. We define a differential operator $\fDef{\DN{2k}{}}{\CkO{\infty}{\R^d}}{\CkO{\infty}{\R^d}}$ via
\begin{equation*}
\DN{2k}{v} := \sum_{l=k_{\min}}^{k} \sigma_l (-\LaplaceN{})^l v.
\end{equation*}
\end{definition}

The precise relationship between $\bilinear[a]{\cdot}{\cdot}$ and $\DN{2k}{}$ is described in the subsequent lemma. Anticipating \cref{Native_space_Props}, we claim that there holds $\CkO{\infty}{\R^d} \subseteq V$, so we can plug any given $v \in \CkO{\infty}{\R^d}$ into $\bilinear[a]{\cdot}{\cdot}$. Furthermore, we will prove that every $u \in V$ lies in $\HkLoc{k}{\R^d}$, so that the integrals in $\bilinear[a]{u}{v}$ are amenable to successive partial integrations over bounded subsets of $\R^d$.
\begin{lemma} \label{Bil_form_Diff_op}
For all $u \in V$ and $v \in \CkO{\infty}{\R^d}$, there holds the identity
\begin{equation*}
\bilinear[a]{u}{v} = \I{\R^d}{u(\DN{2k}{v})}{x}.
\end{equation*}

\end{lemma}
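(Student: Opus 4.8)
The plan is to reduce the identity to iterated integration by parts, handling the fact that $u$ is only locally $H^k$ (and not globally smooth) by a cutoff/truncation argument. First I would recall from the (forthcoming) \cref{Native_space_Props} that $\CkO{\infty}{\R^d} \subseteq V$, that every $u \in V$ lies in $\HkLoc{k}{\R^d}$, and that for $v \in \CkO{\infty}{\R^d}$ the function $\DN{2k}{v} = \sum_{l=k_{\min}}^{k} \sigma_l (-\LaplaceN{})^l v$ again lies in $\CkO{\infty}{\R^d}$, so in particular is compactly supported; thus both sides of the asserted identity are well-defined finite integrals. Fix $v$ and let $\omega$ be a bounded open set with $\suppN{v} \subseteq \omega$ and $\closureN{\omega}$ compact; since $v$ and all its derivatives vanish near $\boundaryN{\omega}$, all integrations by parts below produce no boundary terms.

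The core computation is the following. For a single multi-index term, write $\frac{l!}{\alpha!}$ as the multinomial coefficient appearing in $(-\LaplaceN{})^l$; the key algebraic fact is the multinomial identity $(-\LaplaceN{})^l w = \sum_{\abs{\alpha}=l} (-1)^l \frac{l!}{\alpha!} \DN{2\alpha}{w}$, equivalently $\sum_{\abs{\alpha}=l}\frac{l!}{\alpha!}\DN{\alpha}{} \circ \DN{\alpha}{} = (-1)^l \cdot (-1)^l\LaplaceN{}^{\,l}$ after pairing each $\DN{\alpha}{}$ factor. Concretely, for $u \in \HkLoc{k}{\R^d}$ and $v \in \CkO{\infty}{\R^d}$ and each fixed $l \in \set{k_{\min},\dots,k}$, I would show
\begin{equation*}
\sum_{\abs{\alpha}=l} \frac{l!}{\alpha!} \skalar[\Lp{2}{\R^d}]{\DN{\alpha}{u}}{\DN{\alpha}{v}} = \I{\R^d}{u \, (-\LaplaceN{})^l v}{x}
\end{equation*}
by moving all $l$ derivatives off $u$ and onto $v$ via $l$ successive integrations by parts (each valid since $u$ has $l \leq k$ weak derivatives locally and $v$ is smooth and compactly supported), then collecting the resulting $\DN{2\alpha}{v}$ terms with the multinomial theorem to recognize $(-1)^l \sum_{\abs{\alpha}=l} \frac{l!}{\alpha!} \DN{2\alpha}{v} = (-1)^l \LaplaceN{}^{\,l} v = (-\LaplaceN{})^l v$. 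Multiplying by $\sigma_l$ and summing over $l$ from $k_{\min}$ to $k$ gives, on the left, exactly $\bilinear[a]{u}{v}$ by \cref{Native_space}, and on the right $\I{\R^d}{u \sum_{l=k_{\min}}^{k}\sigma_l (-\LaplaceN{})^l v}{x} = \I{\R^d}{u (\DN{2k}{v})}{x}$ by \cref{Diff_op}.

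The main obstacle is purely a regularity bookkeeping matter rather than a conceptual one: justifying the integrations by parts when $u$ is only in $\HkLoc{k}$ and not smooth. Since $v$ is compactly supported, every integral is over the fixed bounded set $\omega$ and it suffices to have $u \in \Hk{k}{\omega}$, which holds; one then either invokes the standard integration-by-parts formula for Sobolev functions against compactly supported smooth test functions directly, or mollifies $u$ on a slightly larger set, performs the (classical) integrations by parts for the smooth approximants, and passes to the limit using $\DN{\beta}{u_\epsilon} \to \DN{\beta}{u}$ in $\Lp{2}{\omega}$ for $\abs{\beta} \leq k$ together with the uniform smoothness bounds on $v$. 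The only other point worth a line of care is that no boundary contributions arise at any stage, which is immediate because $v \in \CkO{\infty}{\R^d}$ vanishes identically in a neighbourhood of $\boundaryN{\omega}$.
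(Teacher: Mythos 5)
Your proposal is correct and follows essentially the same route as the paper: successive integration by parts (justified by the compact support of $v$ and $\DN{\alpha}{u} \in \LpLoc{2}{\R^d}$ for $\abs{\alpha} \leq k$) combined with the multinomial identity $\sum_{\abs{\alpha}=l} (l!/\alpha!)\, \DN{2\alpha}{v} = \LaplaceN{}^l v$. The additional care you take with the mollification/Sobolev integration-by-parts justification is exactly the bookkeeping the paper leaves implicit.
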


\begin{proof}
The relation follows readily from successive partial integrations and the identity $\sum_{\abs{\alpha}=l} (l!/\alpha!) \DN{2\alpha}{v} = \LaplaceN{}^l v$. The compact support of $v \in \CkO{\infty}{\R^d}$ and the fact that $\DN{\alpha}{u} \in \LpLoc{2}{\R^d}$ for all $\abs{\alpha} \leq k$, guarantee that all integrals involved in the computation are well-defined.
%
%
\end{proof}

\begin{definition} \label{Coeff_set_C}
We define the set of coefficient vectors $\mvemph{C} := \Set{\mvemph{c} \in \R^N}{\forall p \in P: \skalar[2]{\mvemph{c}}{E_N p} = 0}$.
\end{definition}

A careful analysis of the function $\phi$ from the case $k_{\min}=0$ above leads us to the following set of assumptions:
\begin{assumption} \label{Fundamental_solution}
We assume that the coefficients $\sigma_{k_{\min}},\dots,\sigma_k \geq 0$ from \cref{Native_space} are such that there exists a function $\fDef{\phi}{\R^d}{\R}$ with the following properties:
\begin{enumerate}
\item \emph{Regularity:} There holds $\phi \in \Ck{0}{\R^d}$.
\item \emph{Fundamental solution:} $\phi$ is a fundamental solution of $\DN{2k}{}$, i.e.,
\begin{equation*}
\forall x_0 \in \R^d: \forall v \in \CkO{\infty}{\R^d}: \quad \quad \I{\R^d}{\phi(x-x_0)(\DN{2k}{v})(x)}{x} = v(x_0).
\end{equation*}
\item \emph{Conformity:} For all $\mvemph{c} \in \mvemph{C}$, there holds $\sum_{n=1}^{N} \mvemph{c}_n \phi(\cdot-x_n) \in V$.
\end{enumerate}
\end{assumption}

Note that we \emph{did not} assume that $\phi$ lies in the native space $V$. Instead, we only require certain linear combinations of its translates to do so. Combining \cref{Bil_form_Diff_op} with $(2)$ and $(3)$, we regain the important orthogonality $\bilinear[a]{u}{v} = \sum_{n=1}^{N} \mvemph{c}_n v(x_n) = 0$ for all $u$ of the form $u = \sum_{n=1}^{N} \mvemph{c}_n \phi(\cdot-x_n)$, $\mvemph{c} \in \mvemph{C}$, and for all $v \in \CkO{\infty}{\R^d}$ with $E_N v = \mvemph{0}$. Using a density argument, the orthogonality can then be extended to the space of functions $v \in V$ with $E_N v = \mvemph{0}$. A detailed proof of these facts will be given later in \cref{Coord_mappings_Phi_Pi_Props}.

In contrast to the case $k_{\min} = 0$, a function $\phi$ satisfying \cref{Fundamental_solution} need not be unique if $k_{\min}>0$. In fact, if $\phi$ is a valid choice and if $p \in P$, then $\phi + p$ is valid as well.

According to the much celebrated Malgrange-Ehrenpreis Theorem, \cite{Malgrange}, \cite{Ehrenpreis}, every non-trivial differential operator with constant coefficients has a fundamental solution in the distributional sense. Since $\DN{2k}{} = \sum_{l=k_{\min}}^{k} \sigma_l (-\LaplaceN{})^l$ falls into this category, this theorem provides us with a distribution $\phi \in \dual{\CkO{\infty}{\R^d}}$ that satisfies assumption $(2)$ in the distributional sense. However, to the best of our knowledge, there is no guarantee that this $\phi$ satisfies $(1)$ and $(3)$, if no further assumptions on the coefficients $\sigma_l$ are made.

\cref{Fundamental_solution_Ex_0} is somewhat unsatisfactory, since it does not provide an explicit formula for the basis function $\phi$. However, for a specific choice of the coefficients $\sigma_0,\dots,\sigma_k$, we have the following result, the proof of which we postpone to \cref{SSec:Fundamental_solutions_proofs}.
\begin{lemma} \label{Fundamental_solution_Ex_1}
Let $b \in (0,\infty)$. Consider the case where $k_{\min} := 0$ and $\sigma_l := \binom{k}{l} b^{2(k-l)} > 0$ for all $l \in \set{0,\dots,k}$. The differential operator from \cref{Diff_op} takes the form
\begin{equation*}
\DN{2k}{v} = \sum_{l=0}^{k} \binom{k}{l} b^{2(k-l)} (-\LaplaceN{})^l v = (b^2-\LaplaceN{})^k v.
\end{equation*}

Define the function
\begin{equation*}
\forall x \in \R^d: \quad \quad \phi(x) := \frac{(4\pi)^{-d/2}}{\Gamma(k)} \I[\infty]{0}{t^{k-d/2-1} e^{-b^2 t} e^{-\norm{x}^2/(4t)}}{t}.
\end{equation*}

Then, $\phi$ satisfies \cref{Fundamental_solution}.
\end{lemma}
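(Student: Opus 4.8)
The plan is to verify the three properties in \cref{Fundamental_solution} directly, exploiting the explicit Gaussian-type integral representation of $\phi$. The key observation is that the integrand is built from the heat kernel $G_t(x) := (4\pi t)^{-d/2} e^{-\norm{x}^2/(4t)}$, which satisfies $(\partialN[]{t}{G_t}) = \LaplaceN{} G_t$ and $\I{\R^d}{G_t(x)}{x} = 1$. Indeed, up to the prefactor one has $\phi(x) = \frac{1}{\Gamma(k)} \I[\infty]{0}{t^{k-1} e^{-b^2 t} G_t(x)}{t}$; this is (a dimensionally adjusted form of) the classical subordination formula expressing the Bessel/Matérn kernel $(b^2 - \LaplaceN{})^{-k}$ as a Laplace-type superposition of heat semigroup operators $e^{t\LaplaceN{}}$. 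First I would check convergence and regularity: near $t \to 0^+$ the factor $e^{-\norm{x}^2/(4t)}$ decays super-polynomially when $x \neq 0$, and for $x = 0$ the power $t^{k-d/2-1}$ is integrable precisely because $k > d/2$; near $t \to \infty$ the factor $e^{-b^2 t}$ forces exponential decay. Differentiating under the integral sign (justified by locally uniform domination of the $x$-derivatives of $G_t$, again using $e^{-b^2 t}$ at infinity and the Gaussian at the origin) shows $\phi \in \Ck{\infty}{\R^d \setminus \set{0}} \cap \Ck{0}{\R^d}$, which gives property (1).

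Next, property (2), the fundamental solution property. The plan is to show $(b^2 - \LaplaceN{})^k \phi = \Dirac{0}$ in $\dual{\CkO{\infty}{\R^d}}$. Using the heat equation, $(b^2 - \partialN[]{t}{}) \big( e^{-b^2 t} G_t \big) = -e^{-b^2 t} \LaplaceN{} G_t + b^2 e^{-b^2 t} G_t - (-b^2 e^{-b^2 t} G_t + e^{-b^2 t} \partialN[]{t}{G_t}) $, and since $\partialN[]{t}{G_t} = \LaplaceN{} G_t$, one finds that applying $(b^2 - \LaplaceN{})$ to the $t$-integrand and integrating by parts in $t$ reduces the exponent $t^{k-1}/\Gamma(k)$ to $t^{k-2}/\Gamma(k-1)$ — i.e. $(b^2 - \LaplaceN{})$ lowers $k$ by one. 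Iterating $k$ times leaves $\frac{1}{\Gamma(1)} \I[\infty]{0}{e^{-b^2 t} \partialN[]{t}{} (\cdots)}{t}$-type boundary terms plus $\I[\infty]{0}{\partialN[]{t}{}(e^{-b^2 t} G_t)}{t} = \lim_{t \to \infty} e^{-b^2 t} G_t - \lim_{t \to 0^+} e^{-b^2 t} G_t = -\Dirac{0}$, since $G_t \to \Dirac{0}$ as $t \to 0^+$ (approximate identity) and $G_t \to 0$ as $t \to \infty$. Testing against $v \in \CkO{\infty}{\R^d}$, moving all derivatives onto $v$ (legitimate since everything is smooth and compactly supported in $x$, with the $t$-integral absolutely convergent after pairing with $v$), and carefully tracking the boundary terms in the iterated integration by parts — all of which vanish because of the $e^{-b^2 t}$ and $t^{k-j}$ factors at the two endpoints for $j < k$ — yields $\I{\R^d}{\phi(x) (b^2 - \LaplaceN{})^k v(x)}{x} = v(0)$, and the general $x_0$ follows by translation invariance. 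I would organize this as a clean induction on $k$.

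Property (3), conformity, is immediate in this case: since $k_{\min} = 0$ we have $\mvemph{C} = \R^N$ and $P = \set{0}$, so there is nothing to constrain, and we must only check that $\phi(\cdot - x_n) \in V = \Hk{k}{\R^d}$ (with equivalent norms, by the discussion after \cref{Native_space}), hence that any linear combination lies in $V$. By translation invariance it suffices to show $\phi \in \Hk{k}{\R^d}$. The cleanest route is the Fourier side: the subordination identity gives $\FourierHatN{\phi}(\xi) = (b^2 + \norm{\xi}^2)^{-k}$ (this can be read off by applying $\Fourier{\cdot}$ to the Gaussian representation, since $\FourierN{G_t}(\xi) = e^{-t\norm{\xi}^2}$ and $\I[\infty]{0}{\frac{t^{k-1}}{\Gamma(k)} e^{-(b^2 + \norm{\xi}^2) t}}{t} = (b^2 + \norm{\xi}^2)^{-k}$), so $(1 + \norm{\xi}^2)^{k/2} \FourierHatN{\phi}(\xi) \in \Lp{2}{\R^d}$ precisely because $(1+\norm{\xi}^2)^k (b^2+\norm{\xi}^2)^{-2k} \ceq (1+\norm{\xi}^2)^{-k}$ is integrable for $k > d/2$. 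Thus $\phi \in \Hk{k}{\R^d} = V$.

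The main obstacle is property (2): rigorously justifying the iterated integration by parts in $t$ while keeping track of all endpoint contributions and the interchange of the $t$-integral, the $x$-integral against $v$, and the $k$ applications of $(b^2 - \LaplaceN{})$. The delicate point is the $t \to 0^+$ endpoint, where $G_t$ concentrates at the origin: one must argue that for $j < k$ the boundary term $t^{k-j} e^{-b^2 t} (\text{derivatives of } G_t)$ paired against a test function vanishes (the surviving power $t^{k-j}$ with $k - j \geq 1$ beats the singular behavior of the heat kernel and its derivatives after integration against a smooth $v$), while exactly at $j = k$ the approximate-identity property produces the Dirac mass. Once the bookkeeping at the endpoints is set up, the algebra via $\partialN[]{t}{G_t} = \LaplaceN{} G_t$ is mechanical. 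An alternative that sidesteps the endpoint analysis is to work entirely in Fourier variables — verify $\FourierHatN{\phi}(\xi) = (b^2 + \norm{\xi}^2)^{-k}$ as above, whence $(b^2 - \LaplaceN{})^k \phi$ has Fourier transform $\equiv 1$, i.e. equals $\Dirac{0}$ — but then one still owes the identification of the tempered-distribution $\phi$ defined spatially with the one defined spectrally, which is itself a (milder) Fubini/dominated-convergence argument. I would present the Fourier computation as the primary argument for (2) and (3) together, and relegate the direct heat-kernel manipulation to a remark.
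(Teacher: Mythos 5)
Your primary argument — computing $\FourierHatN{\phi}$ via the subordination formula $\I[\infty]{0}{\tfrac{t^{k-1}}{\Gamma(k)}e^{-(b^2+\norm{\xi}^2)t}}{t}=(b^2+\norm{\xi}^2)^{-k}$, verifying property (2) by Fourier duality against test functions, and property (3) via the Fourier characterization of $\Hk{k}{\R^d}=V$ (using $\mvemph{C}=\R^N$, $P=\set{0}$) — is exactly the paper's proof, up to a harmless difference in Fourier normalization. The proposal is correct; the heat-kernel integration-by-parts route you sketch as an alternative is not needed.
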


We mention that $\phi$ can also be written in the form
\begin{equation*}
\phi(x) = \frac{(2\pi)^{-d/2}}{2^{k-1} \Gamma(k)} \(\frac{\norm{x}}{b}\)^{k-d/2} K_{k-d/2}(b\norm{x}),
\end{equation*}
which goes by the name of \emph{Mat\'ern function}, \emph{Sobolev spline} or \emph{Bessel potential} in the literature (e.g., \cite{Aronszajn_Bessel_potentials}). Here, $K_{\nu}(r) := \I[\infty]{0}{e^{-r\cosh(s)}\cosh(\nu s)}{s}$ is the well-known \emph{modified Bessel function of the second kind}. To see the identity, one writes $K_{\nu}(r) = \I[\infty]{-\infty}{e^{-r(e^s+e^{-s})/2} e^{\nu s}/2}{s}$, plugs in $\nu=k-d/2$ and $r=b\norm{x}$, and then substitutes $s=\ln(2bt/\norm{x})$. Furthermore, in the case where $d \in \set{1,3,5,\dots}$, there holds the representation
\begin{equation*}
\phi(x) = \frac{(4\pi)^{(1-d)/2}}{\Gamma(k) (2b)^{2L+1}} \sum_{l=0}^{L} \frac{(2L-l)!}{l!(L-l)!} (2b\norm{x})^l e^{-b\norm{x}}, \quad \quad \quad L := k-d/2-1/2 \in \N_0.
\end{equation*}
This follows easily from the known identity $K_{L+1/2}(r) = \pi^{1/2} e^{-r} \sum_{l=0}^{L} (L+l)!/(l!(L-l)!(2r)^{l+1/2})$, which can be found, e.g., in \cite[Page 925]{Gradshteyn_Table}.

We finish this section with another example for a function $\phi$ satisfying \cref{Fundamental_solution}. This time, we look at the other extreme case, $k_{\min} = k$. The proof will be delayed to \cref{SSec:Fundamental_solutions_proofs} again.
\begin{lemma} \label{Fundamental_solution_Ex_2}
Consider the case where $k_{\min} = k$ and $\sigma_{k_{\min}} = \sigma_k = 1$. The differential operator from \cref{Diff_op} reads
\begin{equation*}
\DN{2k}{v} = (-\LaplaceN{})^k v.
\end{equation*}

Define the \emph{thin-plate spline}
\begin{equation*}
\begin{array}{lrcl}
(d \in \set{1,3,5,\dots}) & \phi(x) &:=& C_1 \norm{x}^{2k-d}, \\
(d \in \set{2,4,6,\dots}) & \phi(x) &:=& C_2 \norm{x}^{2k-d} \ln\norm{x},
\end{array}
\end{equation*}
where $C_1 := \Gamma(d/2-k)/(4^k \pi^{d/2} (k-1)!)$ and $C_2 := (-1)^{k+(d-2)/2}/(2^{2k-1} \pi^{d/2} (k-1)!(k-d/2)!)$. Then, $\phi$ satisfies \cref{Fundamental_solution}.
\end{lemma}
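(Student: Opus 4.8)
The plan is to verify the three properties of \cref{Fundamental_solution} for the explicitly given $\phi$. Since the differential operator is now the pure polyharmonic operator $\DN{2k}{} = (-\LaplaceN{})^k$, the key classical fact to invoke is that (up to the stated normalization constants) $\phi$ is, in the odd-dimensional case, a constant multiple of $\norm{x}^{2k-d}$, and in the even-dimensional case a multiple of $\norm{x}^{2k-d}\ln\norm{x}$, and that these are precisely the standard fundamental solutions of the $k$-fold Laplacian. I would first recall the well-known fact that the fundamental solution of $(-\LaplaceN{})$ on $\R^d$ has the radial form $c_d \norm{x}^{2-d}$ ($d\neq 2$) or $c_2 \ln\norm{x}$ ($d=2$), and then obtain the fundamental solution of $(-\LaplaceN{})^k$ by iterated convolution, or equivalently by the Fourier-side identity $\widehat{\phi}(\xi) \ceq \norm{\xi}^{-2k}$ interpreted as a tempered distribution. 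A clean route is to reduce to \cref{Fundamental_solution_Ex_1}: the function there is the fundamental solution of $(b^2-\LaplaceN{})^k$, and formally letting $b \to 0^+$ (after subtracting a suitable polynomial, which is allowed since $\phi+p$ is still admissible) produces the fundamental solution of $(-\LaplaceN{})^k$; the integral $\int_0^\infty t^{k-d/2-1} e^{-\norm{x}^2/(4t)}\,\mathrm{d}t$ can be computed in closed form via the substitution $s = \norm{x}^2/(4t)$, yielding exactly $C_1 \norm{x}^{2k-d}$ when $2k-d>0$ (the Gamma-function $\Gamma(k-d/2)$ appears), and one recovers the logarithmic term in the borderline even-dimensional cases by an analytic-continuation / limiting argument in $d$ (or equivalently differentiating the power $\norm{x}^{s}$ in $s$ at $s=2k-d$).

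The three properties are then checked as follows. Regularity (1): $\phi \in \Ck{0}{\R^d}$ because $2k-d \geq 2k - 2k + 1 > 0$ (recall $k>d/2$, so $2k-d \in \{1,2,\dots\}$), hence $\norm{x}^{2k-d}$ and $\norm{x}^{2k-d}\ln\norm{x}$ extend continuously by $0$ at the origin and are smooth away from it. Fundamental solution (2): this is the classical statement that $(-\LaplaceN{})^k \phi = \Dirac{0}$ in $\dual{\CkO{\infty}{\R^d}}$; I would prove it by writing $(-\LaplaceN{})^k \phi = (-\LaplaceN{})^{k-1}\big[(-\LaplaceN{})\phi\big]$ and pushing all $k$ Laplacians onto the test function, reducing to the single known identity for the Newtonian/logarithmic kernel via Green's second identity on $\R^d \setminus \Ball{0}{\epsilon}$ and a boundary-layer computation as $\epsilon \to 0$; the normalization constants $C_1,C_2$ are exactly what makes the surface-integral residue equal $v(0)$. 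The translation to general $x_0$ follows from translation invariance of $\LaplaceN{}$, exactly as in \cref{Fundamental_solution_Ex_0}.

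Conformity (3) is the real work and the main obstacle, since here $k_{\min}=k$ means $V = \mathrm{BL}^k(\R^d)$ and $\phi$ itself does \emph{not} lie in $V$ (indeed $\norm{x}^{2k-d}$ grows at infinity and its top-order derivatives are $\ceq \norm{x}^{-d/2}\cdot\text{(bounded)}$-type, not in $\Lp{2}{\R^d}$ near infinity without cancellation). I would show: for $\mvemph{c} \in \mvemph{C}$, i.e., $\sum_n \mvemph{c}_n E_N p = 0$ for all $p \in P = \Pp{k-1}{\R^d}$, the combination $u = \sum_n \mvemph{c}_n \phi(\cdot - x_n)$ satisfies $\DN{\alpha}{u} \in \Lp{2}{\R^d}$ for every $\abs{\alpha}=l$, $k_{\min}\le l\le k$. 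Away from the points this is elementary (local smoothness); near each $x_n$ the singularity of $\DN{\alpha}{\phi}$ behaves like $\norm{x-x_n}^{2k-d-l}$, which is locally $\Lp{2}$ precisely when $2(2k-d-l) > -d$, i.e. $l < 2k - d/2$, and since $l \le k \le 2k - d/2$ (as $k \ge d/2$, with the borderline $l=k=d/2$... but $k>d/2$ strictly, so $k < 2k-d/2$) this holds; the logarithmic factor in even dimensions only helps. The decay at infinity is the delicate point: a Taylor expansion of $\phi(\cdot - x_n)$ about a fixed origin, combined with the moment conditions $\sum_n \mvemph{c}_n x_n^\beta = 0$ for $\abs{\beta} \le k-1$, shows that the leading terms of order $\ge 2k-d$ through $\ge 2k-d-(k-1)$ in the expansion of $\sum_n \mvemph{c}_n \phi(\cdot-x_n)$ cancel, so that $\DN{\alpha}{u}(x) = \Landau{\norm{x}^{2k-d-l-k}} = \Landau{\norm{x}^{k-d-l}}$ as $\norm{x}\to\infty$; for $l \ge k_{\min}=k$ this gives decay $\Landau{\norm{x}^{-d}}$, which is square-integrable at infinity (again the even-dimensional $\ln$ is a harmless lower-order perturbation that I would absorb with an $\epsilon$ in the exponent). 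Assembling the local and the far-field estimates yields $u \in V$. I expect the bookkeeping of the Taylor-cancellation orders — making sure one has $k$ vanishing moments available and that they kill exactly enough terms to reach integrable decay for all relevant $l$ and $\abs{\alpha}$ — to be the fiddly heart of the argument; everything else is classical potential theory.
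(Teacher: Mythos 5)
Your proposal is correct in substance, but for the decisive step -- conformity, item $(3)$ of \cref{Fundamental_solution} -- it takes a genuinely different route from the paper. You argue in physical space: local square-integrability of $\DN{\alpha}{\phi}$ near each $x_n$ (the exponent check $l<2k-d/2$ is right), and far-field decay obtained by Taylor-expanding $\phi(\cdot-x_n)$ in $x_n$ and using the vanishing moments $\skalar[2]{\mvemph{c}}{E_N p}=0$, $p\in\Pp{k-1}{\R^d}$, to kill the first $k$ terms, leaving $\DN{\alpha}{u}=\Landau{\norm{x}^{k-d-l}}$; for $l=k=k_{\min}$ this is $\Landau{\norm{x}^{-d}}$ and hence in $\Lp{2}{}$ at infinity. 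This is the classical Duchon-style argument and it works, though as you note the bookkeeping (differentiating first, then expanding the remainder with $\DN{2k}{\phi}\ceq\norm{x}^{-d}$, and handling the logarithm in even $d$) is the fiddly part. The paper instead works entirely on the Fourier side: it invokes the generalized Fourier transform $\FourierHatN{\phi}(y)=(2\pi)^{-d/2}\norm{y}^{-2k}$ paired against homogeneous Schwartz functions, observes that $c(y):=\sum_n\mvemph{c}_n e^{-\complexI\skalar{x_n}{y}}$ vanishes to order $k$ at $y=0$ precisely because $\mvemph{c}\in\mvemph{C}$, so that $(\complexI\,\cdot)^{\alpha}c\FourierHatN{\phi}\in\Lp{2}{\R^d}$ for $\abs{\alpha}=k$, and identifies $\DN{\alpha}{v}$ with its inverse Fourier transform. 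The Fourier route is shorter and sidesteps your local/far-field case split; your route is more elementary and makes the geometric role of the moment conditions explicit. For the fundamental-solution property the paper simply cites \cite[Theorem 10.36]{Wendland_Scattered}, whereas you propose to reprove it via Green's identities (fine, just more work); your suggested $b\to 0^+$ reduction to \cref{Fundamental_solution_Ex_1} should be treated only as heuristic, as you indicate.
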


\subsection{The LSE} \label{SSec:LSE}

We already suggested that the solution $u \in V$ of \cref{Interpol_problem_1} can be written in the form $u = \sum_{n=1}^{N} \mvemph{c}_n \phi_n + \sum_{\abs{\alpha}<k_{\min}} \mvemph{d}_{\alpha} \pi_{\alpha}$, where the coefficients $\mvemph{c}_n, \mvemph{d}_{\alpha} \in \R$ satisfy a certain LSE. The corresponding \emph{interpolation matrix} is the main object of interest of the present work.

\begin{definition} \label{Ansatz_functions}
Denote by $x_1,\dots,x_N \in \R^d$ the interpolation points from \cref{Interpol_points} and by $\phi \in \Ck{0}{\R^d}$ the fundamental solution from \cref{Fundamental_solution}. For every $n \in \set{1,\dots,N}$, we define the translate
\begin{equation*}
\phi_n := \phi(\cdot-x_n) \in \Ck{0}{\R^d}.
\end{equation*}
Furthermore, let $\Set{\pi_{\alpha}}{\abs{\alpha}<k_{\min}} \subseteq P$ be the Lagrange basis associated with the unisolvent point set $\Set{\xi_{\alpha}}{\abs{\alpha}<k_{\min}} \subseteq \R^d$ from \cref{Interpol_points}, i.e., $\pi_{\beta}(\xi_{\alpha}) = \kronecker{\alpha\beta}$ (Kronecker $\delta$) for all $\abs{\alpha},\abs{\beta}<k_{\min}$.
\end{definition}

\begin{definition} \label{LSE_Matrices}
We define the following matrices:
\begin{equation*}
\mvemph{A} := (\phi_n(x_m))_{m,n=1}^{N} \in \R^{N \times N}, \quad \quad \quad \mvemph{B} := (\pi_{\beta}(x_n))_{\abs{\beta}<k_{\min},n \in \set{1,\dots,N}} \in \R^{N_{\min} \times N}.
\end{equation*}

Furthermore, we define the \emph{interpolation matrix}
\begin{equation*}
\(\begin{matrix} \mvemph{A} & \mvemph{B}^T \\ \mvemph{B} & \mvemph{0} \end{matrix}\) \in \R^{(N+N_{\min})\times(N+N_{\min})}.
\end{equation*}
\end{definition}

As will be shown later in \cref{LSE_invertible}, the interpolation matrix is invertible. Now, let $f \in V$ and set $\mvemph{f} := E_N f \in \R^N$. Denote by $\mvemph{c} \in \R^N$ and $\mvemph{d} \in \R^{N_{\min}}$ the unique solution of the following LSE:
\begin{equation*}
\(\begin{matrix} \mvemph{A} & \mvemph{B}^T \\ \mvemph{B} & \mvemph{0} \end{matrix}\) \(\begin{matrix} \mvemph{c} \\ \mvemph{d} \end{matrix}\) = \(\begin{matrix} \mvemph{f} \\ \mvemph{0} \end{matrix}\).
\end{equation*}

Then, the unique solution $u \in V$ of \cref{Interpol_problem_1} can be written in the following form:
\begin{equation*}
u = \sum_{n=1}^{N} \mvemph{c}_n \phi_n + \sum_{\abs{\alpha}<k_{\min}} \mvemph{d}_{\alpha} \pi_{\alpha}.
\end{equation*}
Once again, we postpone the derivation of this identity to the proof section below, \cref{LSE_cd_to_u}. The first row of the LSE encodes the interpolation conditions $E_N u = E_N f$, whereas the second row guarantees that the coefficient vector $\mvemph{c} \in \R^N$ lies in the set $\mvemph{C}$ from \cref{Coeff_set_C}. Owing to \cref{Fundamental_solution}, this implies the conformity $\sum_{n=1}^{N} \mvemph{c}_n \phi_n \in V$.

\subsection{Hierarchical matrices} \label{SSec:Hierarchical_matrices}

An extensive discussion of hierarchical matrices can be found in the books \cite{Hackbusch_Hierarchical_matrices,BebendorfBuch,BoermBuch}. Here, we only provide a bare minimum of definitions that are need for the subsequent analysis.

\begin{definition} \label{Bubbles}
We define the \emph{bubbles}
\begin{equation*}
\forall n \in \set{1,\dots,N}: \quad \quad \Bubbles{n} := \Ball[2]{x_n}{\hMin{}} \subseteq \R^d.
\end{equation*}
Similarly, for every subset $I \subseteq \set{1,\dots,N}$, we set $\Bubbles{I} := \bigcup_{n \in I} \Bubbles{n} \subseteq \R^d$.
\end{definition}

Note that the definition of the separation distance $\hMin{}$ in \cref{Interpol_points} guarantees that the bubbles $\Bubbles{n}$ are pairwise disjoint, i.e., $\Bubbles{n} \cap \Bubbles{m} = \emptyset$ whenever $n \neq m$.

\begin{definition} \label{Box}
A subset $B \subseteq \R^d$ is called \emph{(axes-parallel) box}, if it has the form $B = \bigtimes_{i=1}^{d} (a_i,b_i)$ with $a_i \leq b_i$.
\end{definition}

\begin{definition} \label{Block_partition}
Let $\CSmall,\CAdm>0$. A tuple $(I,J)$ with $I,J \subseteq \set{1,\dots,N}$ is called \emph{small}, if there holds $\min\set{\cardN{I}, \cardN{J}} \leq \CSmall$. It is called \emph{admissible}, if there exist boxes $B_I,B_J \subseteq \R^d$ such that $\Bubbles{I} \subseteq B_I$, $\Bubbles{J} \subseteq B_J$ and
\begin{equation*}
\diam[2]{B_I} \leq \CAdm \dist[2]{B_I}{B_J}.
\end{equation*}

A set $\BPart$ of tuples $(I,J)$ with $I,J \subseteq \set{1,\dots,N}$ is called \emph{sparse hierarchical block partition}, if the following assumptions are satisfied:
\begin{enumerate}
\item The system $\Set{I \times J}{(I,J) \in \BPart}$ forms a partition of $\set{1,\dots,N} \times \set{1,\dots,N}$.

\item There holds $\BPart = \BPartSmall \cup \BPartAdm$, where every $(I,J) \in \BPartSmall$ is small and every $(I,J) \in \BPartAdm$ is admissible.

\item There exists a constant $C>0$, such that
\begin{equation*}
\forall \mvemph{M} \in \R^{N \times N}: \quad \quad \norm[2]{\mvemph{M}} \leq C \ln(N) \max_{(I,J) \in \BPart} \norm[2]{\restrictN{\mvemph{M}}{I \times J}}.
\end{equation*}

\end{enumerate}

\end{definition}

While items $(1)$ and $(2)$ are more or less standard in the literature about hierarchical matrices, item $(3)$ might seem odd to the informed reader. Usually, this inequality is proved, rather than assumed (e.g., \cite[Lemma 6.5.8]{Hackbusch_Hierarchical_matrices}). However, its proof typically requires a rigorous introduction of \emph{(hierarchical) cluster trees} and \emph{(hierarchical) block cluster trees}, two types of data structure that are described in great detail in \cite[Chapter 5]{Hackbusch_Hierarchical_matrices}. Here, we largely avoid this tedious task and only give a brief overview:

In essence, a \emph{(hierarchical) cluster tree} $\Tree{N}$ is a system of index \emph{clusters} $I \subseteq \set{1,\dots,N}$ that contains the full index set $\set{1,\dots,N}$ as its root. Furthermore, for every $I \in \Tree{N}$ with $\cardN{I} > \CSmall$, the cluster tree also contains two non-empty, disjoint clusters $I_1,I_2$ with $I = I_1 \cup I_2$. Starting at the tree root, these \emph{sons} are typically generated by a predefined \emph{clustering strategy} which takes into account the geometric positions of the interpolation points $x_1,\dots,x_N \in \R^d$. An example for a \emph{geometrically balanced clustering strategy}, which uses a hierarchy of axes-parallel boxes $B_I \subseteq \R^d$, can be found in \cite{Hackbusch_Geometric_clustering}. Finally, we denote by $\depth{\Tree{N}} \in \N$ the tree's \emph{depth}, and assume that the reader is familiar with this notion.

On the other hand, a \emph{(hierarchical) block cluster trees} $\Tree{N \times N}$ consists of tuples $(I,J)$, where $I$ and $J$ are clusters from a given cluster tree $\Tree{N}$. This time, the tuple $(\set{1,\dots,N}, \set{1,\dots,N})$ is the tree's root and, for every $(I,J) \in \Tree{N \times N}$ with $\diam[2]{B_I} > \CAdm \dist[2]{B_I}{B_J}$, all four pairs of sons $(I_1,J_1)$, $(I_I,J_2)$, $(I_2,J_1)$, $(I_2,J_2)$ lie in $\Tree{N \times N}$. Here, the sets $B_I,B_J \subseteq \R^d$ are the boxes that were used to build the cluster tree $\Tree{N}$. Finally, the well-known \emph{sparsity constant} $\CSparse{\Tree{N \times N}} \in \N$ counts the maximum number of ``partners'' $(I,J) \in \Tree{N \times N}$ that each cluster $I$ or $J$ can have (see, e.g., \cite[Section 6.3]{Hackbusch_Hierarchical_matrices}).

The geometrically balanced clustering strategy from \cite{Hackbusch_Geometric_clustering} guarantees the bounds $\depth{\Tree{N}} \cleq \ln(\hMin{}^{-1})$ and $\CSparse{\Tree{N \times N}} \cleq 1$. We plug in the relation $1 \cleq N^{\CCard} \hMin{}^d$ from \cref{Interpol_points} and get $\depth{\Tree{N}} \cleq \ln(N)$. Now, using \cite[Lemma 6.5.8]{Hackbusch_Hierarchical_matrices}, we obtain the following inequality:
\begin{equation*}
\forall \mvemph{M} \in \R^{N \times N}: \quad \quad \norm[2]{\mvemph{M}} \leq \CSparse{\Tree{N \times N}} \depth{\Tree{N}} \max_{(I,J) \in \BPart} \norm[2]{\restrictN{\mvemph{M}}{I \times J}} \cleq \ln(N) \max_{(I,J) \in \BPart} \norm[2]{\restrictN{\mvemph{M}}{I \times J}}.
\end{equation*}

These considerations justify the assumptions that we made in \cref{Block_partition}.

\begin{definition} \label{H_matrices}
Let $\BPart$ be a sparse hierarchical block partition and $r \in \N$ a given \emph{block rank bound}. We define the set of \emph{$\mathcal{H}$-matrices} by
\begin{equation*}
\HMatrices{\BPart}{r} := \Set{\mvemph{M} \in \R^{N \times N}}{\forall (I,J) \in \BPartAdm: \exists \mvemph{X} \in \R^{I \times r}, \mvemph{Y} \in \R^{J \times r}: \restrictN{\mvemph{M}}{I \times J} = \mvemph{X} \mvemph{Y}^T}.
\end{equation*}
\end{definition}

We mention that, according to \cite[Lemma 6.3.6]{Hackbusch_Hierarchical_matrices}, the memory requirements to store an $\mathcal{H}$-matrix $\mvemph{M} \in \HMatrices{\BPart}{r}$ can be bounded by $\CSparse{\Tree{N \times N}} (\CSmall + r) \depth{\Tree{N}} N$. Inserting the relations $\CSparse{\Tree{N \times N}} \cleq 1$ and $\depth{\Tree{N}} \cleq \ln(N)$ from before, we get an overall bound of $\Landau{r\ln(N)N}$ for the storage complexity.

\subsection{The main result}

We are finally in the position to formulate our main result.

\begin{theorem} \label{Main_result}
Let $\bilinear[a]{\cdot}{\cdot}$ be the bilinear form from \cref{Native_space} and consider a set of interpolation points $\set{x_1,\dots,x_N} \subseteq \R^d$ satisfying \cref{Interpol_points}. Denote by $\phi \in \Ck{0}{\R^d}$ the fundamental solution from \cref{Fundamental_solution}. Furthermore, let $(\begin{smallmatrix} \mvemph{A} & \mvemph{B}^T \\ \mvemph{B} & \mvemph{0} \end{smallmatrix})$ be the interpolation matrix from \cref{LSE_Matrices}. Finally, let $\BPart$ be a sparse hierarchical block partition as in \cref{Block_partition}. Write the inverse of the interpolation matrix in the block form $(\begin{smallmatrix} \mvemph{A} & \mvemph{B}^T \\ \mvemph{B} & \mvemph{0} \end{smallmatrix})^{-1} = (\begin{smallmatrix} \mvemph{S_{11}} & \mvemph{S_{12}} \\ \mvemph{S_{21}} & \mvemph{S_{22}} \end{smallmatrix})$ with matrices $\mvemph{S_{11}} \in \R^{N \times N}$, $\mvemph{S_{21}} \in \R^{N_{\min} \times N}$, $\mvemph{S_{12}} \in \R^{N \times N_{\min}}$ and $\mvemph{S_{22}} \in \R^{N_{\min} \times N_{\min}}$. Then, there exists a constant $\CExp>0$ such that the following holds true: For every block rank bound $r \in \N$, there exists an $\mathcal{H}$-matrix $\mvemph{M} \in \HMatrices{\BPart}{r}$ such that
\begin{equation*}
\norm[2]{\mvemph{S_{11}} - \mvemph{M}} \cleq \ln(N) N^{\CCard(3k-d)/d} \exp(-\CExp r^{1/(d+1)}).
\end{equation*}
\end{theorem}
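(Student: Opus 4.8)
The plan is to follow the standard $\mathcal{H}$-matrix approximation paradigm pioneered in \cite{BebendorfHackbusch03} and refined in \cite{Faustmann_H_matrices_FEM}: reduce the matrix approximation problem to a local (blockwise) low-rank approximation problem, and solve the latter by constructing a hierarchy of low-dimensional approximation spaces for ``discrete solutions'' of the interpolation problem. First I would use the representation formula for $\mvemph{S_{11}}$ (promised as \cref{Rep_formula} in \cref{SSec:Rep_formula}), which should express $\mvemph{S_{11}} \mvemph{f}$ in terms of the solution $u \in V_0$ of the (reformulated, $a(\cdot,\cdot)$-orthogonal) interpolation problem with data $\mvemph{f}$; concretely, entries of $\restrictN{\mvemph{S_{11}}}{I \times J}$ should be recoverable from evaluating such solutions at the points $\{x_n\}_{n \in I}$ when the data is supported at $\{x_m\}_{m \in J}$. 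Because of the third assumption in \cref{Block_partition}, it suffices to show that for every \emph{admissible} block $(I,J) \in \BPartAdm$ the restriction $\restrictN{\mvemph{S_{11}}}{I \times J}$ can be approximated by a rank-$r$ matrix with error $\lesssim N^{\CCard(3k-d)/d} \exp(-\CExp r^{1/(d+1)})$; the $\ln(N)$ factor and the union over $\BPartSmall \cup \BPartAdm$ are then absorbed via that stability estimate (small blocks are reproduced exactly since $\min\{\#I,\#J\}\le\CSmall$ contributes rank $\le \CSmall$, which can be folded into $r$ up to constants).

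The core is the construction for a fixed admissible pair $(I,J)$. The admissibility gives boxes $B_I \supseteq \Bubbles{I}$, $B_J \supseteq \Bubbles{J}$ with $\diam[2]{B_I} \le \CAdm \dist[2]{B_I}{B_J}$. The key observation is that if $u$ is the interpolation-problem solution corresponding to data supported only at points in $J$, then on a neighborhood of $B_I$ (say an inflated box $\hat B_I$ still separated from $B_J$) the function $u$ is \emph{$a$-harmonic} in the appropriate sense: it satisfies $\bilinear[a]{u}{v}=0$ for all test functions $v$ supported in $\hat B_I$, equivalently $\DN{2k}{u}=0$ there, because the right-hand side ``charges'' only points in $J$. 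I would then establish a Caccioppoli-type interior regularity estimate for such $a$-harmonic functions: on nested boxes $B^{(1)} \subset B^{(2)}$ of comparable size with $\dist[2]{B^{(1)}}{\partial B^{(2)}} \cgeq \delta$, one controls $\seminorm[a,B^{(1)}]{u}$ (indeed higher Sobolev seminorms up to order $k$) by $\delta^{-\text{(power)}}$ times $\norm[\Lp{2}{B^{(2)}}]{u}$ or $\seminorm[a,B^{(2)}]{u}$. Iterating the Caccioppoli inequality across a telescope of $\sim L$ concentric boxes between $B_I$ and $\hat B_I$ yields a gain of a factor like $(L/\text{diam})^{L}$ against the loss of $L$ derivatives, which upon balancing $L$ against $r$ produces the exponential convergence — this is exactly the mechanism that forces the rate $\exp(-\CExp r^{1/(d+1)})$ (the exponent $1/(d+1)$ coming from the dimension of the local polynomial/finite-element-like approximation spaces in $d$ space variables plus the ``scale'' variable). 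Concretely, $u$ restricted to $B_I$ is approximated from a space of dimension $\lesssim L^{d+1}$ (a piecewise polynomial / coarsening operator $\CoarseningOp{}{}$ applied on a grid of width $\h{}/L$-type scale, cf.\ the operators $Q$, $K$ in \cref{SSec:Coarse_ops}), with error decaying like $\exp(-c L)$; setting $r \ceq L^{d+1}$ gives the claim.

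Assembling the blockwise bound: the approximation space for $\restrictN{\mvemph{S_{11}}}{I \times J}$ is obtained by feeding all $J$-supported data vectors through the solution operator and then through the local coarsening operator $\CoarseningOp{}{}$ on the $I$-side; its image has dimension $\lesssim r$, giving the required factorization $\restrictN{\mvemph{M}}{I\times J}=\mvemph{X}\mvemph{Y}^T$. The error on the block is bounded by the approximation error of $\CoarseningOp{}{}$ times the stability of the map from data to local $a$-harmonic function; the latter stability is where the algebraic factor $N^{\CCard(3k-d)/d}$ enters — it comes from estimating the $\Lp{2}$-norm of a translate $\phi_n$ (or of $\sum \mvemph{c}_n\phi_n$) on a far-field box and from the inverse-type/scaling estimates that convert between the continuous native-space seminorm and the discrete Euclidean norm on $\R^N$, each contributing powers of $\hMin{}^{-1}$, which via $1 \lesssim N^{\CCard}\hMin{}^{d}$ become powers of $N^{\CCard/d}$; the precise exponent $3k-d$ tracks the regularity order $k$ of $\phi$ near the diagonal (singularity of order $2k-d$ or $2k-d$ with a log) together with the derivatives taken. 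The main obstacle, and the part requiring the most care, is proving the iterated Caccioppoli / interior regularity estimate for the higher-order, possibly-degenerate operator $\DN{2k}{}=\sum_{l=k_{\min}}^k \sigma_l(-\LaplaceN{})^l$ with sharp tracking of the scale-dependence of all constants (so that the telescoping of $\sim L$ steps does not destroy the exponential gain), and dovetailing it correctly with the construction and error analysis of the coarsening operators $\CoarseningOp{}{}$/$\CutoffOp{}{}$ on non-uniform point clouds — i.e.\ making the single-scale approximation error genuinely $\exp(-cL)$ and the space dimension genuinely $O(L^{d+1})$ uniformly in $N$. Everything else (the reduction via \cref{Block_partition}(3), handling $\BPartSmall$, linearity of $f\mapsto u$, and the final bookkeeping of powers of $N$) is routine once these two technical pillars are in place.
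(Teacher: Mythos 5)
Your proposal follows essentially the same route as the paper: reduction to admissible blocks via the representation formula and \cref{Block_partition}(3), exploiting that the solution is ``harmonic'' near $B_I$ when the data is supported near $B_J$, a Caccioppoli inequality, and an iterated cut-off/low-rank-coarsening construction on $\sim L$ nested boxes yielding dimension $O(L^{d+1})$ and error $\exp(-cL)$, with the algebraic factor arising from powers of $\hMin{}^{-1}$ converted through the balance $1 \cleq N^{\CCard}\hMin{}^{d}$. The only minor inaccuracy is your attribution of part of the factor $N^{\CCard(3k-d)/d}$ to the near-diagonal singularity of $\phi$ --- in the paper the fundamental solution plays no role after the representation formula; the powers $\hMin{}^{d-2k}$ and $\hMin{}^{-k}$ come instead from the scaling of the dual bump functions $\lambda_n$ and from the norm equivalence between $\norm[\Hk{k}{B}]{\cdot}$ and the weighted sum of seminorms in the multi-step coarsening step.
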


\section{Proof of main results} \label{Sec:Proof}
\subsection{The native space $V$} \label{SSec:Native_space}

We start the proof of the main result with a discussion of the native space $V$ from \cref{Native_space}. Given a function $v \in V$ and a multi-index $\alpha \in \N_0^d$ with $\abs{\alpha} \in \set{k_{\min},\dots,k}$, we know that there exists an $\alpha$-th weak derivative $\DN{\alpha}{v} \in \LpLoc{1}{\R^d}$ that happens to lie in $\Lp{2}{\R^d}$. Naively, the definition of $V$ does not tell us anything about the existence and regularity of lower-order derivatives $\DN{\alpha}{v} \in \LpLoc{1}{\R^d}$, where $\abs{\alpha} < k_{\min}$. However, as shown below, it turns out that these lower-order derivatives do exist and lie in $\LpLoc{2}{\R^d}$.

Our proof of this fact uses a Poincar\'e-type inequality of the form $\norm[\Hk{k}{\Omega}]{\cdot} \cleq \seminorm[\Hk{k}{\Omega}]{\cdot} + \norm[\Lp{1}{\Omega}]{\cdot}$, where $\Omega \subseteq \R^d$ is a ball. The following, slightly more general result, will cover all the occurrences of Poincar\'e-type inequalities in the present work.
\begin{lemma} \label{Poincare_inequality}
Let $\Omega \subseteq \R^d$ be a bounded Lipschitz domain. Let $k \in \N_0$ and $Z$ be a normed vector space. Furthermore, let $\fDef{\iota_Z}{\Hk{k}{\Omega}}{Z}$ be a linear continuous operator satisfying the implication $(\iota_Z p = 0 \Rightarrow p=0)$ for every $p \in \Pp{k-1}{\Omega}$. Then, there holds the following Poincar\'e-type inequality:
\begin{equation*}
\forall v \in \Hk{k}{\Omega}: \quad \quad \norm[\Hk{k}{\Omega}]{v} \leq C(d,k,\Omega,Z,\iota_Z) (\seminorm[\Hk{k}{\Omega}]{v} + \norm[Z]{\iota_Z v}).
\end{equation*}
\end{lemma}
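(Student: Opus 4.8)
The standard approach is a contradiction/compactness argument combined with the Rellich--Kondrachov theorem. Suppose the inequality fails. Then for each $n \in \N$ there exists $v_n \in \Hk{k}{\Omega}$ with $\norm[\Hk{k}{\Omega}]{v_n} = 1$ but $\seminorm[\Hk{k}{\Omega}]{v_n} + \norm[Z]{\iota_Z v_n} < 1/n$. By reflexivity of $\Hk{k}{\Omega}$ (a Hilbert space), after passing to a subsequence we may assume $v_n \rightharpoonup v$ weakly in $\Hk{k}{\Omega}$. Since $\Omega$ is a bounded Lipschitz domain, the embedding $\Hk{k}{\Omega} \hookrightarrow \Hk{k-1}{\Omega}$ is compact (Rellich--Kondrachov), so $v_n \to v$ strongly in $\Hk{k-1}{\Omega}$. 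In particular all derivatives $\DN{\alpha}{v_n} \to \DN{\alpha}{v}$ in $\Lp{2}{\Omega}$ for $\abs{\alpha} \leq k-1$, while for $\abs{\alpha} = k$ we have $\DN{\alpha}{v_n} \rightharpoonup \DN{\alpha}{v}$ weakly; but $\norm[\Lp{2}{\Omega}]{\DN{\alpha}{v_n}} \leq \seminorm[\Hk{k}{\Omega}]{v_n} \to 0$ forces $\DN{\alpha}{v} = 0$ for all $\abs{\alpha} = k$. Hence $v \in \Pp{k-1}{\Omega}$ (a connected bounded Lipschitz domain being in particular connected, so vanishing $k$-th derivatives means $v$ is a polynomial of degree $< k$). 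Moreover the strong $\Hk{k-1}$-convergence upgrades to strong $\Hk{k}$-convergence since the top-order part tends to zero in norm; thus $\norm[\Hk{k}{\Omega}]{v} = \lim \norm[\Hk{k}{\Omega}]{v_n} = 1$, so $v \neq 0$.

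On the other hand, continuity of $\iota_Z$ and strong convergence $v_n \to v$ give $\iota_Z v = \lim \iota_Z v_n$; but $\norm[Z]{\iota_Z v_n} < 1/n \to 0$, so $\iota_Z v = 0$. Since $v \in \Pp{k-1}{\Omega}$, the hypothesized implication $(\iota_Z p = 0 \Rightarrow p = 0)$ yields $v = 0$, contradicting $\norm[\Hk{k}{\Omega}]{v} = 1$. This contradiction establishes the existence of a finite constant $C$ for which the inequality holds; the dependence of $C$ on $d,k,\Omega,Z,\iota_Z$ is as claimed, since the compactness constants depend only on $\Omega$ (through $d,k$) and the final step uses only $\iota_Z$.

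The main obstacle is making sure the compactness step is clean: one needs $\Hk{k}{\Omega} \hookrightarrow \Hk{k-1}{\Omega}$ compact, which is exactly Rellich--Kondrachov for bounded Lipschitz domains, and one needs that a distribution on $\Omega$ (bounded Lipschitz, hence having finitely many connected components, but one may as well treat each component, or simply invoke that $\Pp{k-1}{\Omega}$ as defined in the paper is the full polynomial space and the argument goes through on each component) with all $k$-th weak derivatives zero is a polynomial of degree $\leq k-1$. A minor point worth spelling out: the norm convergence $\norm[\Hk{k}{\Omega}]{v_n} \to \norm[\Hk{k}{\Omega}]{v}$, which combined with weak convergence in the Hilbert space $\Hk{k}{\Omega}$ gives strong convergence, so that $\iota_Z v_n \to \iota_Z v$ in $Z$ by continuity of $\iota_Z$. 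Everything else is routine.
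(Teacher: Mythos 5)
Your proof is correct and is exactly the argument the paper has in mind: its proof consists of the single line "this can be proved by contradiction using arguments similar to \cite[Section 5.8.1]{Evans_PDEs}", i.e.\ the standard compactness/contradiction argument via Rellich--Kondrachov that you spell out. The only (trivial) edge case not covered by your compactness step is $k=0$, where the claimed inequality holds with $C=1$ since $\norm[\Hk{0}{\Omega}]{v}=\seminorm[\Hk{0}{\Omega}]{v}$.
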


\begin{proof}
This can be proved by contradiction using arguments similar to \cite[Section 5.8.1.]{Evans_PDEs}.
%
\end{proof}


Now let us return to the question of existence of the lower-order derivatives $\DN{\alpha}{v}$, $\abs{\alpha}<k_{\min}$, if $v \in V$ is given. Apart from a Poincar\'e inequality, the subsequent proof sports a standard mollification argument with $\CkO{\infty}{\R^d}$-functions, as can be found in any good textbook treating Sobolev spaces (e.g., \cite{Gilbarg_Trudinger}, \cite{Evans_PDEs}). In brief, given a ``standard mollifier'' $\mu \in \CkO{\infty}{\R^d}$ with $\I{\R^d}{\mu(x)}{x}=1$, one defines a sequence $(\mu_n)_{n \in \N} \subseteq \CkO{\infty}{\R^d}$ via $\mu_n(x) := n^d \mu(nx)$. Then, for every function $v \in \LpLoc{1}{\R^d}$, one can show that $\mu_n*v \in \Ck{\infty}{\R^d}$ with $\D{\alpha}{\mu_n*v} = (\DN{\alpha}{\mu_n})*v$ for all $\alpha \in \N_0^d$. Additionally, if $v$ has an $\alpha$-th weak derivative $\DN{\alpha}{v} \in \LpLoc{1}{\R^d}$ for some $\alpha \in \N_0^d$, and if there holds $\restrict{\DN{\alpha}{v}}{\Omega} \in \Lp{p}{\Omega}$ for some $p \in [1,\infty)$ and some open subset $\Omega \subseteq \R^d$, then one can prove that $\restrict{\D{\alpha}{\mu_n*v}}{\Omega} \in \Lp{p}{\Omega}$ and that $\norm[\Lp{p}{\Omega}]{\DN{\alpha}{v}-\D{\alpha}{\mu_n*v}} \xrightarrow{n} 0$.

\begin{lemma} \label{Native_space_Props}
\leavevmode
\begin{enumerate}
\item The function $\bilinear[a]{\cdot}{\cdot}$ defines a symmetric, positive semi-definite bilinear form on $V$ and $\seminorm[a]{\cdot}$ defines a seminorm.
\item There holds $P \subseteq V$.
\item Let $u,v \in V$ such that $u \in P$ or $v \in P$. Then, $\bilinear[a]{u}{v} = 0$.
\item For all $v \in V$, there holds $\seminorm[a]{v} = 0$, if and only if $v \in P$.
\item For all $u,v \in V$, there holds the Cauchy-Schwarz inequality $\abs{\bilinear[a]{u}{v}} \leq \seminorm[a]{u}\seminorm[a]{v}$.
\item There hold the following inclusions (as sets):
\begin{equation*}
\CkO{\infty}{\R^d} \subseteq \Hk{k}{\R^d} \subseteq V \subseteq \HkLoc{k}{\R^d} \subseteq \Ck{0}{\R^d}.
\end{equation*}
In particular, every $v \in V$ is $k$-times weakly differentiable and there holds $\DN{\alpha}{v} \in \LpLoc{2}{\R^d}$ for all $\abs{\alpha} < k_{\min}$, as well as $\DN{\alpha}{v} \in \Lp{2}{\R^d}$ for all $\abs{\alpha} \in \set{k_{\min},\dots,k}$.

\item For all $v \in V$, there hold the bounds
\begin{equation*}
\seminorm[a]{v} \cleq \sum_{l=k_{\min}}^{k} \seminorm[\Hk{l}{\R^d}]{v} \cleq \seminorm[\Hk{k_{\min}}{\R^d}]{v} + \seminorm[\Hk{k}{\R^d}]{v} \cleq \seminorm[a]{v}.
\end{equation*}

\end{enumerate}
\end{lemma}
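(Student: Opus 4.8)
The plan is to prove \cref{Native_space_Props} by establishing the items in a logical order, since several of them depend on one another. I would start with item $(2)$, which is essentially immediate: a polynomial $p \in P = \Pp{k_{\min}-1}{\R^d}$ has all weak derivatives $\DN{\alpha}{p} = 0$ for $\abs{\alpha} \geq k_{\min}$, so trivially $\DN{\alpha}{p} \in \Lp{2}{\R^d}$ for $\abs{\alpha} \in \set{k_{\min},\dots,k}$, and of course $p \in \LpLoc{1}{\R^d}$; hence $P \subseteq V$. Item $(1)$ is then a routine verification: symmetry and bilinearity of $\bilinear[a]{\cdot}{\cdot}$ follow from the corresponding properties of the $\Lp{2}{\R^d}$ inner products, and positive semi-definiteness from $\bilinear[a]{v}{v} = \sum_{l} \sigma_l \sum_{\abs{\alpha}=l} (l!/\alpha!) \norm[\Lp{2}{\R^d}]{\DN{\alpha}{v}}^2 \geq 0$ since all $\sigma_l \geq 0$; that $\seminorm[a]{\cdot}$ is a seminorm (triangle inequality) follows from the Cauchy--Schwarz inequality, which I would prove here as item $(5)$ via the usual argument for semi-inner products (expand $\bilinear[a]{u + t v}{u + t v} \geq 0$ and optimize in $t$, handling the degenerate case $\seminorm[a]{v} = 0$ separately).

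The heart of the lemma is item $(6)$, the chain of inclusions $\CkO{\infty}{\R^d} \subseteq \Hk{k}{\R^d} \subseteq V \subseteq \HkLoc{k}{\R^d} \subseteq \Ck{0}{\R^d}$. The first inclusion is standard (compactly supported smooth functions lie in every Sobolev space). The inclusion $\Hk{k}{\R^d} \subseteq V$ is immediate from the definitions since $v \in \Hk{k}{\R^d}$ has all derivatives up to order $k$ in $\Lp{2}{\R^d}$, in particular those of orders $k_{\min},\dots,k$. The main obstacle — and the step flagged in the text before the lemma — is $V \subseteq \HkLoc{k}{\R^d}$: given $v \in V$, I must show that the \emph{lower}-order weak derivatives $\DN{\alpha}{v}$, $\abs{\alpha} < k_{\min}$, exist and lie in $\LpLoc{2}{\R^d}$. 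The strategy is to work on an arbitrary ball $\Omega = \Ball[2]{0}{R}$, mollify $v$ to get $v_n := \mu_n * v \in \Ck{\infty}{\R^d}$, and apply the Poincar\'e-type inequality \cref{Poincare_inequality} on (a slightly larger ball than) $\Omega$ to the smooth functions $v_n$. Concretely, for each multi-index $\beta$ with $\abs{\beta} = k_{\min}$, one applies \cref{Poincare_inequality} with $k$ replaced by $k - k_{\min}$ to $\DN{\beta}{v_n}$, using $\iota_Z$ an $\Lp{1}$-type functional, to bound $\norm[\Hk{k-k_{\min}}{\Omega}]{\DN{\beta}{v_n}}$ in terms of $\seminorm[\Hk{k-k_{\min}}{\Omega}]{\DN{\beta}{v_n}} = \sum_{\abs{\gamma}=k-k_{\min}} \norm[\Lp{2}{\Omega}]{\DN{\beta+\gamma}{v_n}}$ (which is controlled since $\abs{\beta+\gamma} = k$) plus a lower-order $\Lp{1}$-term. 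More directly, I would apply \cref{Poincare_inequality} to $v_n$ itself on $\Omega$ with $Z = \Lp{1}{\Omega}$ and $\iota_Z = \restrictN{\cdot}{\Omega}$, but this only controls $\seminorm[\Hk{k}{\Omega}]{v_n}$ by the top-order seminorm; the subtlety is that the definition of $V$ only gives $\Lp{2}$-control of orders $\geq k_{\min}$, not all orders $\leq k$. The cleanest route: iterate a Poincar\'e inequality that, on a ball, bounds $\norm[\Lp{2}{\Omega}]{\DN{\alpha}{v}}$ for $\abs{\alpha} = j$ in terms of $\sum_{\abs{\gamma}=1}\norm[\Lp{2}{\Omega}]{\DN{\alpha+\gamma}{v}}$ plus a zero-order term, applied to the (already known to exist in $\Lp{2}_{\mathrm{loc}}$) derivatives of order $k_{\min}$, descending to orders $k_{\min}-1, k_{\min}-2, \dots, 0$; at each stage the mollification argument sketched in the text guarantees the existence of the weak derivative as the $\Lp{2}{\Omega}$-limit of $\DN{\alpha}{v_n}$ and transfers the bound. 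Once $v \in \HkLoc{k}{\R^d}$, the final inclusion $\HkLoc{k}{\R^d} \subseteq \Ck{0}{\R^d}$ is exactly the local Sobolev embedding $\Hk{k}{\Omega} \subseteq \Ck{0}{\closureN{\Omega}}$ noted after \cref{Native_space}, valid because $k > d/2$.

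With $(6)$ in hand, the remaining items follow quickly. For item $(3)$: if $u \in P$ then $\DN{\alpha}{u} = 0$ for $\abs{\alpha} = l \geq k_{\min}$, so every summand in $\bilinear[a]{u}{v}$ vanishes; symmetry gives the case $v \in P$. For item $(4)$: if $v \in P$ then $\seminorm[a]{v} = 0$ by $(3)$ (with $u = v$); conversely, $\seminorm[a]{v} = 0$ forces $\DN{\alpha}{v} = 0$ in $\Lp{2}{\R^d}$ for all $\abs{\alpha} = k_{\min}$ (taking the $l = k_{\min}$ term, whose coefficient $\sigma_{k_{\min}} > 0$), hence $\DN{\alpha}{v} = 0$ for $\abs{\alpha} = k_{\min}$ as a distribution; since $v \in \HkLoc{k}{\R^d} \subseteq \LpLoc{1}{\R^d}$, a standard connectedness/iterated-antiderivative argument shows $v$ is a polynomial of degree $\leq k_{\min}-1$, i.e., $v \in P$. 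Finally item $(7)$ is a direct norm comparison: the identity $\sum_{\abs{\alpha}=l}(l!/\alpha!)\norm[\Lp{2}]{\DN{\alpha}{v}}^2 \ceq \seminorm[\Hk{l}{\R^d}]{v}^2$ (the multinomial coefficients are bounded above and below by constants depending only on $d$ and $l$) together with $\sigma_{k_{\min}},\sigma_k > 0$ and $\sigma_l \geq 0$ gives $\seminorm[a]{v}^2 \ceq \sigma_{k_{\min}}\seminorm[\Hk{k_{\min}}{\R^d}]{v}^2 + \sigma_k \seminorm[\Hk{k}{\R^d}]{v}^2$ up to the intermediate terms, and the middle chain $\sum_{l=k_{\min}}^k \seminorm[\Hk{l}{\R^d}]{v} \ceq \seminorm[\Hk{k_{\min}}{\R^d}]{v} + \seminorm[\Hk{k}{\R^d}]{v}$ follows from interpolation of Sobolev seminorms on $\R^d$ (e.g., via the Fourier transform, $\abs{\xi}^{2l} \leq \abs{\xi}^{2k_{\min}} + \abs{\xi}^{2k}$ for $k_{\min} \leq l \leq k$). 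I expect the descent argument in $(6)$ — carefully combining the mollification limit passage with the repeated application of \cref{Poincare_inequality} on nested balls to control the growth of constants — to be the only genuinely delicate part; everything else is bookkeeping.
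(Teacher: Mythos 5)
Your proposal is correct and follows essentially the same route as the paper: items $(1)$--$(5)$ are routine, item $(7)$ is the Fourier characterization of Sobolev seminorms, and the key inclusion $V \subseteq \HkLoc{k}{\R^d}$ is obtained by mollification combined with the Poincar\'e-type inequality of \cref{Poincare_inequality} on an exhausting sequence of balls. The one remark worth making is that the ``subtlety'' you raise against the direct application of \cref{Poincare_inequality} is not an obstacle --- applied to the differences $v_n - v_m$ with $Z$ the $L^1$-space of the ball, the right-hand side involves only the top-order seminorm and the $L^1$-norm, both of which converge under mollification (no control of intermediate orders is needed as \emph{input}), and this single application, rather than your iterated descent from order $k_{\min}$ down to $0$, is exactly the paper's argument.
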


\begin{proof}
Items $(1)-(5)$ and the inclusions $\CkO{\infty}{\R^d} \subseteq \Hk{k}{\R^d} \subseteq V$ from $(6)$ are elementary. The inclusion $\HkLoc{k}{\R^d} \subseteq \Ck{0}{\R^d}$ follows from $k>d/2$ and a well-known Sobolev embedding. Item $(7)$ follows from the well-known characterization of Sobolev spaces by Fourier transforms (e.g., \cite[Section 5.8.4.]{Evans_PDEs}). Finally, let us sketch the proof of $V \subseteq \HkLoc{k}{\R^d}$: Given $v \in V$, we define the approximants $v_n := \mu_n*v \in \Ck{\infty}{\R^d}$, $n \in \N$, where $(\mu_n)_{n \in \N} \subseteq \CkO{\infty}{\R^d}$ is the sequence of mollifiers from above. We then consider a sequence of bounded Lipschitz domains $B_1 \subseteq B_2 \subseteq B_3 \subseteq \dots \subseteq \R^d$, say, $B_i := \Ball{0}{i}$. For fixed $i \in \N$, we can use \cref{Poincare_inequality}, the convergence $\norm[\Lp{1}{B_i}]{v-v_n} + \seminorm[\Hk{k}{B_i}]{v-v_n} \xrightarrow{n} 0$ and a Cauchy sequence argument to prove that $\restrictN{v}{B_i} \in \Hk{k}{B_i}$. One can then show that the global functions $\fDef{\DN{\alpha}{v}}{\R^d}{\R}$, $\abs{\alpha} \leq k$, defined via $\restrict{\DN{\alpha}{v}}{B_i} := \D{\alpha}{\restrictN{v}{B_i}}$, lie in $\LpLoc{2}{\R^d}$ and represent the $\alpha$-th weak derivative of $v$. In other words, $v$ must lie in $\HkLoc{k}{\R^d}$.
\end{proof}

The next lemma establishes the fact that $\CkO{\infty}{\R^d} \subseteq V$ is dense. We remind the reader of \cref{Coeff_set_C}, where the set $\mvemph{C} \subseteq \R^d$ was introduced.
\begin{lemma} \label{Native_space_Density}
The subset $\CkO{\infty}{\R^d} \subseteq V$ is dense in the following sense: For every $v \in V$, there exists a sequence $(v_n)_{n \in \N} \subseteq \CkO{\infty}{\R^d}$ such that $\seminorm[a]{v-v_n} \xrightarrow{n} 0$ as well as $\abs{\skalar[2]{\mvemph{c}}{E_N(v-v_n)}} \xrightarrow{n} 0$ for all $\mvemph{c} \in \mvemph{C}$.
\end{lemma}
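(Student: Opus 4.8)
The plan is to build the approximating sequence by combining a \emph{cutoff} with a \emph{mollification}, where the cutoff is applied not to $v$ itself but to $v-p_R$ for a suitably chosen polynomial $p_R\in P$ depending on the cutoff radius $R$. The reason for the polynomial correction is that $\seminorm[a]{(1-\eta_R)v}$ need not tend to $0$ when $v$ carries a nontrivial component in $P$ (already for $d\ge2$, a nonzero constant gives $\seminorm[a]{(1-\eta_R)\cdot1}\not\to0$). This correction is harmless for both conclusions: $\seminorm[a]{\cdot}$ has kernel $P$ by \cref{Native_space_Props}, so subtracting $p_R$ does not change $\seminorm[a]{\cdot}$, and $\skalar[2]{\mvemph{c}}{E_N p}=0$ for all $p\in P$ and $\mvemph{c}\in\mvemph{C}$ by the very definition of $\mvemph{C}$. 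Fix once and for all a cutoff $\eta\in\CkO{\infty}{\R^d}$ with $\eta\equiv1$ on $\Ball{0}{1}$ and $\supp{\eta}\subseteq\Ball{0}{2}$, and set $\eta_R:=\eta(\cdot/R)$, so that for $\beta\neq0$ one has $\norm[\Lp{\infty}{\R^d}]{\DN{\beta}{(1-\eta_R)}}\cleq R^{-\abs{\beta}}$ with support in the annulus $A_R:=\Ball{0}{2R}\setminus\closureN{\Ball{0}{R}}$.

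\textbf{The core estimate.} The heart of the argument is to show that $p_R\in P$ can be chosen so that $\seminorm[a]{(1-\eta_R)(v-p_R)}\xrightarrow{R\to\infty}0$. Since $\seminorm[a]{\cdot}^2$ is a finite nonnegative-coefficient combination of the quantities $\norm[\Lp{2}{\R^d}]{\DN{\alpha}{w}}^2$ with $k_{\min}\le\abs{\alpha}\le k$, it suffices to make each $\norm[\Lp{2}{\R^d}]{\DN{\alpha}{((1-\eta_R)(v-p_R))}}$ small. I would expand this by the Leibniz rule. The term with no derivative on $\eta_R$ equals $(1-\eta_R)\DN{\alpha}{v}$ (as $\DN{\alpha}{p_R}=0$ for $\abs{\alpha}\ge k_{\min}>\deg p_R$), whose $\Lp{2}{\R^d}$-norm is at most $\norm[\Lp{2}{\R^d\setminus\Ball{0}{R}}]{\DN{\alpha}{v}}\to0$ because $\DN{\alpha}{v}\in\Lp{2}{\R^d}$; likewise, any Leibniz term in which the derivative landing on $v-p_R$ still has order $\ge k_{\min}$ is bounded by $R^{-\abs{\beta}}\norm[\Lp{2}{\R^d\setminus\Ball{0}{R}}]{\DN{\alpha-\beta}{v}}\le\norm[\Lp{2}{\R^d\setminus\Ball{0}{R}}]{\DN{\alpha-\beta}{v}}\to0$. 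The only dangerous terms are $R^{-\abs{\beta}}\norm[\Lp{2}{A_R}]{\DN{\alpha-\beta}{(v-p_R)}}$ with $m:=\abs{\alpha-\beta}<k_{\min}$; here $\abs{\beta}=\abs{\alpha}-m\ge k_{\min}-m\ge1$.

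\textbf{Choosing $p_R$.} To kill the dangerous terms I would use a scaled Bramble--Hilbert / Deny--Lions estimate, itself a consequence of \cref{Poincare_inequality}: applying \cref{Poincare_inequality} on the fixed annulus $A_1:=\Ball{0}{2}\setminus\closureN{\Ball{0}{1}}$ with $Z:=\Pp{k_{\min}-1}{A_1}$ and $\iota_Z$ a continuous linear projection onto $Z$ (so $\iota_Z$ reproduces $\Pp{k_{\min}-1}$), and testing with $u-\iota_Z u$, gives $\norm[\Hk{k_{\min}}{A_1}]{u-\iota_Z u}\cleq\seminorm[\Hk{k_{\min}}{A_1}]{u}$ for $u\in\Hk{k_{\min}}{A_1}$. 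Applying this to the rescaled function $y\mapsto v(Ry)$ and undoing the scaling yields a \emph{single} polynomial $p_R\in P$ with $\norm[\Lp{2}{A_R}]{\DN{\gamma}{(v-p_R)}}\cleq R^{k_{\min}-\abs{\gamma}}\,\seminorm[\Hk{k_{\min}}{A_R}]{v}$ for all $\abs{\gamma}<k_{\min}$. Since $\abs{\beta}\ge k_{\min}-m$, each dangerous term is then $\cleq R^{-(k_{\min}-m)}R^{k_{\min}-m}\seminorm[\Hk{k_{\min}}{A_R}]{v}=\seminorm[\Hk{k_{\min}}{A_R}]{v}\le\seminorm[\Hk{k_{\min}}{\R^d\setminus\Ball{0}{R}}]{v}\to0$, using $k_{\min}\in\set{k_{\min},\dots,k}$ so that $\DN{\gamma}{v}\in\Lp{2}{\R^d}$ for $\abs{\gamma}=k_{\min}$. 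This proves the core estimate. The main obstacle of the whole proof is exactly here: noticing that one must subtract a polynomial \emph{before} cutting off, and matching the scaling exponents of the Bramble--Hilbert bound against the powers $R^{-\abs{\beta}}$ produced by differentiating $\eta_R$.

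\textbf{Assembly.} For $n\in\N$, first pick $R_n$ so large that $\seminorm[a]{(1-\eta_{R_n})(v-p_{R_n})}<1/(2n)$ and, using the boundedness assumption $\norm{x_m}\le C$ from \cref{Interpol_points}, also $\eta_{R_n}\equiv1$ on $\Ball{0}{C+1}\supseteq\set{x_1,\dots,x_N}$. Then $w_n:=\eta_{R_n}(v-p_{R_n})$ is compactly supported and, since $v\in\HkLoc{k}{\R^d}$ by \cref{Native_space_Props}, lies in $\Hk{k}{\R^d}\subseteq V$. Mollifying $w_n$ with a standard mollifier, choose the mollification parameter so small that $v_n\in\CkO{\infty}{\R^d}$ satisfies $\norm[\Hk{k}{\R^d}]{w_n-v_n}$ small enough that $\seminorm[a]{w_n-v_n}<1/(2n)$ (via $\seminorm[a]{\cdot}\cleq\norm[\Hk{k}{\R^d}]{\cdot}$ from \cref{Native_space_Props}) and, by the Sobolev embedding $\Hk{k}{\R^d}\subseteq\Ck{0}{\R^d}$, $\norm[2]{E_N w_n-E_N v_n}<1/n$. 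Since $\seminorm[a]{p_{R_n}}=0$,
\begin{equation*}
\seminorm[a]{v-v_n}=\seminorm[a]{(v-p_{R_n})-v_n}\le\seminorm[a]{(1-\eta_{R_n})(v-p_{R_n})}+\seminorm[a]{w_n-v_n}<1/n\xrightarrow{n}0,
\end{equation*}
and since $\eta_{R_n}\equiv1$ at every $x_m$ we get $E_N w_n=E_N v-E_N p_{R_n}$, hence $\skalar[2]{\mvemph{c}}{E_N w_n}=\skalar[2]{\mvemph{c}}{E_N v}$ for every $\mvemph{c}\in\mvemph{C}$, and therefore
\begin{equation*}
\abs{\skalar[2]{\mvemph{c}}{E_N(v-v_n)}}=\abs{\skalar[2]{\mvemph{c}}{E_N(w_n-v_n)}}\le\norm[2]{\mvemph{c}}\,\norm[2]{E_N(w_n-v_n)}<\norm[2]{\mvemph{c}}/n\xrightarrow{n}0.
\end{equation*}
This yields the desired sequence.
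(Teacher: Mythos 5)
Your construction --- cut off $v-p_R$ with $\eta_R$, choose $p_R$ by a scaled Deny--Lions/Bramble--Hilbert estimate on the reference annulus, then mollify the compactly supported remainder --- is exactly the paper's argument for $d\ge 2$ (the paper merely mollifies first and cuts off second), and for $d\ge 2$ your core estimate and assembly are correct.

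The gap is the case $d=1$ with $k_{\min}\ge 1$, which is in scope (\cref{Native_space} allows any $d\ge1$ with $k>d/2$, and \cref{Fundamental_solution_Ex_1,Fundamental_solution_Ex_2} explicitly include odd $d$). Your choice of $p_R$ rests on applying \cref{Poincare_inequality} on $A_1=\Ball{0}{2}\setminus\closureN{\Ball{0}{1}}$, but for $d=1$ this set is the disconnected union $(-2,-1)\cup(1,2)$: a function with vanishing $\Hk{k_{\min}}$-seminorm there is only \emph{piecewise} polynomial, one polynomial per component, so the compactness argument behind \cref{Poincare_inequality} cannot conclude, and the resulting estimate is genuinely false. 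Concretely, for $k_{\min}=1$ and $v=\tanh$ one has $\seminorm[\Hk{1}{A_R}]{v}\to0$ exponentially while $\inf_{c\in\R}\norm[\Lp{2}{A_R}]{v-c}\cgeq R^{1/2}$, so no single polynomial $p_R$ satisfies your claimed bound $\norm[\Lp{2}{A_R}]{\DN{\gamma}{(v-p_R)}}\cleq R^{k_{\min}-\abs{\gamma}}\seminorm[\Hk{k_{\min}}{A_R}]{v}$: the two components of the annulus may demand incompatible polynomials, and the mismatch is controlled only through the middle ball, whose $\seminorm[\Hk{k_{\min}}{}]{\cdot}$-content does not tend to zero. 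The lemma itself is still true for $d=1$, but it needs a different mechanism; the paper inserts an intermediate dense class $\Set{v\in\Ck{\infty}{\R}}{\diffN{k_{\min}}{v}\in\CkO{\infty}{\R}}$ reached by truncating $\diffN{k_{\min}}{v}$ inside a Taylor remainder formula, and only then applies the cutoff. To complete your proof you must either supply such a separate one-dimensional argument or restrict the claim to $d\ge2$.
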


\begin{proof}

%

A straightforward mollification argument shows that $V \cap \Ck{\infty}{\R^d} \subseteq V$ is dense in the stated sense. Therefore, it suffices to show that $\CkO{\infty}{\R^d}$ is dense in $V \cap \Ck{\infty}{\R^d}$. First, we prove the case where $d \geq 2$: Let $v \in V \cap \Ck{\infty}{\R^d}$ and $\kappa \in \CkO{\infty}{\R^d}$ be a cut-off function with $\supp{\kappa} \subseteq B_2$ and $\kappa \equiv 1$ on $B_1$, where $B_n := \Ball{0}{n} \subseteq \R^d$ denotes the ball with integer radius $n \in \N$, centered at the origin. Clearly, the scaled version $\kappa_n := \kappa(\cdot/n) \in \CkO{\infty}{\R^d}$ satisfies $\supp{\kappa_n} \subseteq B_{2n}$ and $\kappa_n \equiv 1$ on $B_n$. Furthermore, $\seminorm[\Wkp{l}{\infty}{\R^d}]{1-\kappa_n} \leq \seminorm[\Wkp{l}{\infty}{\R^d}]{1} + \seminorm[\Wkp{l}{\infty}{\R^d}]{\kappa_n} \cleq n^{-l}$ for all $l \in \N_0$. Next, denote by $\hat{A} := B_2\backslash B_1$ the ``reference annulus'' and let $\fDef{\hat{\Pi}}{\Hk{k_{\min}}{\hat{A}}}{\Pp{k_{\min}-1}{\hat{A}}}$ be the corresponding orthogonal projection. Let $\fDef{F_n}{\hat{A}}{B_{2n}\backslash B_n}$, $F_n(x) := nx$, and consider the polynomial $p_n := \hat{\Pi}(v \circ F_n) \circ F_n^{-1} \in P$ (implicitly extended to $\R^d$). Since we are in the case $d \geq 2$ at the moment, the annulus $\hat{A}$ is connected and we can apply the Poincar\'e-type inequality from \cref{Poincare_inequality} to the bounded Lipschitz domain $\Omega = \hat{A}$, the normed vector space $Z = \Hk{k_{\min}}{\hat{A}}$ and the continuous mapping $\fDef{\iota_Z = \hat{\Pi}}{\Hk{k_{\min}}{\hat{A}}}{\Hk{k_{\min}}{\hat{A}}}$. Then, using a standard scaling argument, we find that
\begin{equation*}
\sum_{j=0}^{k_{\min}} n^j \seminorm[\Hk{j}{B_{2n}\backslash B_n}]{v-p_n} \cleq n^{d/2} \norm[\Hk{k_{\min}}{\hat{A}}]{v \circ F_n - \hat{\Pi}(v \circ F_n)} \cleq n^{d/2} \seminorm[\Hk{k_{\min}}{\hat{A}}]{v \circ F_n} = n^{k_{\min}} \seminorm[\Hk{k_{\min}}{B_{2n}\backslash B_n}]{v}.
\end{equation*}

Now, for every $n \in \N$, we define the approximation $v_n := \kappa_n(v-p_n) \in \CkO{\infty}{\R^d}$. Exploiting that $\supp{\DN{\alpha}{\kappa_n}} \subseteq B_{2n}\backslash B_n$ for all $\alpha \neq 0$, and that $\supp{1-\kappa_n} \subseteq \R^d\backslash B_n$, we estimate 
\begin{eqnarray*}
\seminorm[a]{v-v_n} &\cleq& \sum_{l=k_{\min}}^{k} \seminorm[\Hk{l}{\R^d}]{v-v_n} \quad \stackrel{p_n \in P}{=} \quad \sum_{l=k_{\min}}^{k} \seminorm[\Hk{l}{\R^d}]{(1-\kappa_n)(v-p_n)} \\
&\stackrel{\text{Leibniz}}{\cleq}& \sum_{l=k_{\min}}^{k} \( \sum_{j=0}^{k_{\min}-1} \seminorm[\Wkp{l-j}{\infty}{\R^d}]{\kappa_n} \seminorm[\Hk{j}{B_{2n}\backslash B_n}]{v-p_n} + \sum_{j=k_{\min}}^{l} \seminorm[\Wkp{l-j}{\infty}{\R^d}]{1-\kappa_n} \seminorm[\Hk{j}{\R^d\backslash B_n}]{v} \) \\
&\cleq& \sum_{l=k_{\min}}^{k} \( n^{k_{\min}-l} \seminorm[\Hk{k_{\min}}{B_{2n}\backslash B_n}]{v} + \sum_{j=k_{\min}}^{l} n^{j-l} \seminorm[\Hk{j}{\R^d\backslash B_n}]{v} \) \quad \cleq \quad \sum_{l=k_{\min}}^{k} \seminorm[\Hk{l}{\R^d\backslash B_n}]{v} \xrightarrow{n} 0.
\end{eqnarray*}
The convergence of the last term follows from the fact that $\sum_{l=k_{\min}}^{k} \seminorm[\Hk{l}{\R^d}]{v} < \infty$, since $v \in V$. Finally, let $\mvemph{c} \in \mvemph{C}$. Clearly, there exists an index $n_0 \in \N$, such that $\set{x_1,\dots,x_N} \subseteq B_n$ for all $n \geq n_0$. Since $\kappa_n \equiv 1$ on $B_n$, we get the following identity:
\begin{equation*}
\forall n \geq n_0: \quad \quad \skalar[2]{\mvemph{c}}{E_N(v-v_n)} = \skalar[2]{\mvemph{c}}{E_N(v-\kappa_n(v-p_n))} = \skalar[2]{\mvemph{c}}{E_N p_n} = 0.
\end{equation*}

This settles the question of density in the case $d \geq 2$. Now we turn our attention to the case $d=1$: Since the reference annulus $\hat{A} = (-2,-1) \cup (1,2)$ is not connected, we don't have the Poincar\'e inequality at our disposal. Instead, we show that the inclusions $\CkO{\infty}{\R} \subseteq \Set{v \in \Ck{\infty}{\R}}{\diffN{k_{\min}}{v} \in \CkO{\infty}{\R}} \subseteq V \cap \Ck{\infty}{\R}$ are both dense. To see the latter one, let $v \in V \cap \Ck{\infty}{\R}$ and consider the Taylor-type approximants
\begin{equation*}
v_n(x) := \sum_{l=0}^{k_{\min}-1} \frac{\diffN{l}{v}(0)}{l!} x^l + \frac{1}{(k_{\min}-1)!} \I[x]{0}{(x-t)^{k_{\min}-1} \kappa_n(t) \diffN{k_{\min}}{v}(t)}{t},
\end{equation*}
where $\kappa_n \in \CkO{\infty}{\R}$ is the smooth cut-off function from before. Clearly, $v_n \in \Ck{\infty}{\R}$ and $\diffN{k_{\min}}{v_n} = \kappa_n \diffN{k_{\min}}{v} \in \CkO{\infty}{\R}$, meaning that $v_n$ lies in the supposedly dense subset. To bound the error $\seminorm[a]{v-v_n}$, we apply Leibniz' differentiation rule to the product $\kappa_n \diffN{k_{\min}}{v}$ and exploit the support properties of $\kappa_n$. Skipping the steps that are similar to the multivariate case, we obtain
\begin{equation*}
\seminorm[a]{v-v_n} \cleq \sum_{l=k_{\min}}^{k} \norm[\Lp{2}{\R}]{\diffN{l}{v} - \diffN{l}{v_n}} = \sum_{l=k_{\min}}^{k} \norm[\Lp{2}{\R}]{\diffN{l}{v} - \diff{l-k_{\min}}{\kappa_n \diffN{k_{\min}}{v}}} \cleq \dots \cleq \sum_{l=k_{\min}}^{k} \seminorm[\Hk{l}{\R \backslash B_n}]{v} \xrightarrow{n} 0.
\end{equation*}


On the other hand, since $\restrictN{\kappa_n}{B_n} \equiv 1$, Taylor's Theorem tells us that $v_n(x) = v(x)$ for all $x \in B_n$. Consequently, for sufficiently large $n \in \N$, there must hold $E_N v_n = (v_n(x_i))_{i=1}^{N} = (v(x_i))_{i=1}^{N} = E_N v$. This implies $\abs{\skalar[2]{\mvemph{c}}{E_N(v-v_n)}} \xrightarrow{n} 0$, even for all $\mvemph{c} \in \R^N$, and finishes the proof of the density of $\Set{v \in \Ck{\infty}{\R}}{\diffN{k_{\min}}{v} \in \CkO{\infty}{\R}}$ in $V \cap \Ck{\infty}{\R}$.

Finally, let $v \in \Ck{\infty}{\R}$ with $\diffN{k_{\min}}{v} \in \CkO{\infty}{\R}$. Let $R>0$ such that $\supp{\diffN{k_{\min}}{v}} \subseteq (-R,R)$. Clearly, there must hold $v \in \Pp{k_{\min}-1}{(-\infty,-R)}$ and $v \in \Pp{k_{\min}-1}{(R,\infty)}$. In particular, for all integer $n \geq R$ and all $j \in \N_0$, we can bound
\begin{equation*}
\norm[\Lp{2}{B_{2n} \backslash B_n}]{\diffN{j}{v}} \leq C(v) n^{k_{\min}-1-j+d/2} = C(v) n^{k_{\min}-j-1/2}.
\end{equation*}

We then define the approximants $v_n := \kappa_n v \in \CkO{\infty}{\R}$. Using Leibniz' product rule and the stability $\seminorm[\Wkp{j}{\infty}{\R^d}]{\kappa_n} \cleq n^{-j}$ once again, we estimate
\begin{equation*}
\seminorm[a]{v-v_n} \cleq \sum_{l=k_{\min}}^{k} \( \sum_{j=0}^{l-1} \norm[\Lp{2}{B_{2n}\backslash B_n}]{\diffN{l-j}{\kappa_n} \diffN{j}{v}} + \norm[\Lp{2}{\R \backslash B_n}]{(1-\kappa_n)\diffN{l}{v}} \) \cleq n^{-1/2} + \sum_{l=k_{\min}}^{k} \seminorm[\Hk{l}{\R \backslash B_n}]{v} \xrightarrow{n} 0.
\end{equation*}


Furthermore, since $\restrictN{v_n}{B_n} = \restrictN{v}{B_n}$, and $\set{x_1,\dots,x_N} \subseteq B_n$ for all sufficiently large $n \in \N$, it is clear that $\abs{\skalar[2]{\mvemph{c}}{E_N(v-v_n)}} \xrightarrow{n} 0$, where $\mvemph{c} \in \R^N$. This finishes the proof.
\end{proof}


Unless $k_{\min}=0$, it is now clear that $(V,\bilinear[a]{\cdot}{\cdot})$ is not an inner product space, let alone a Hilbert space. However, using the evaluation operator $\fDef{E_N}{\Ck{0}{\R^d}}{\R^N}$ from \cref{Eval_op}, it is not difficult to find a useful subspace of $V$, where $\bilinear[a]{\cdot}{\cdot}$ is strictly positive definite:
\begin{definition} \label{Hom_native_space}
We define the \emph{homogeneous native space} $V_0 := \Set{v \in V}{E_N v = \mvemph{0}} \subseteq V$.
\end{definition}

Recall that ``$E_N v = \mvemph{0}$'' is equivalent to ``$\forall n \in \set{1,\dots,N}: v(x_n) = 0$''. In particular, the space $V_0$ depends on the number $N$ of interpolation points $x_1,\dots,x_N$, as well as their positions.

\begin{lemma} \label{Hom_native_space_Props}
There holds $V_0 \cap P = \set{0}$. Furthermore, the tuple $(V_0,\bilinear[a]{\cdot}{\cdot})$ constitutes a Hilbert space.
\end{lemma}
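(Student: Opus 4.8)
The first claim, $V_0 \cap P = \set{0}$, follows immediately from the unisolvency assumption in \cref{Interpol_points}: if $p \in P$ satisfies $E_N p = \mvemph{0}$, then in particular $p(\xi_\alpha) = 0$ for all $\abs{\alpha} < k_{\min}$, since the unisolvent points $\Set{\xi_\alpha}{\abs{\alpha}<k_{\min}}$ are among the interpolation points; unisolvency then forces $p = 0$.

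For the Hilbert space claim, the plan is to verify that $\bilinear[a]{\cdot}{\cdot}$ is an inner product on $V_0$ and that $V_0$ is complete with respect to the induced norm $\seminorm[a]{\cdot}$. Positive definiteness on $V_0$ is the first point: by \cref{Native_space_Props}(4), $\seminorm[a]{v} = 0$ iff $v \in P$, so for $v \in V_0$ we get $\seminorm[a]{v}=0 \iff v \in V_0 \cap P = \set{0}$; symmetry and bilinearity are inherited from \cref{Native_space_Props}(1). Thus $(V_0, \bilinear[a]{\cdot}{\cdot})$ is a pre-Hilbert space. It remains to show completeness. Given a Cauchy sequence $(v_n)_{n} \subseteq V_0$ with respect to $\seminorm[a]{\cdot}$, I would first use the norm equivalence from \cref{Native_space_Props}(7), $\seminorm[a]{v} \ceq \seminorm[\Hk{k_{\min}}{\R^d}]{v} + \seminorm[\Hk{k}{\R^d}]{v}$, to conclude that $(\DN{\alpha}{v_n})_n$ is Cauchy in $\Lp{2}{\R^d}$ for every $\abs{\alpha} = k_{\min}$ and every $\abs{\alpha}=k$ (and hence, by interpolation / the Fourier characterization underlying \cref{Native_space_Props}(7), for all $k_{\min} \le \abs{\alpha} \le k$). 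Completeness of $\Lp{2}{\R^d}$ gives limit functions $g_\alpha \in \Lp{2}{\R^d}$.

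The main obstacle is identifying a limit \emph{function} $v \in V_0$ whose weak derivatives are the $g_\alpha$, i.e.\ pinning down $v$ itself and not just its high-order derivatives, since $\seminorm[a]{\cdot}$ controls $v$ only up to the polynomial space $P$. Here I would exploit the constraint $E_N v_n = \mvemph{0}$, together with a Poincar\'e-type bound. Concretely: fix a large ball $B = \Ball{0}{R} \subseteq \R^d$ containing all the interpolation points and, in particular, all the unisolvent points $\xi_\alpha$. Applying \cref{Poincare_inequality} on $\Omega = B$ with $Z = \R^{N_{\min}}$ and $\iota_Z v := (v(\xi_\alpha))_{\abs{\alpha}<k_{\min}}$ (continuous by the Sobolev embedding $\Hk{k}{B}\subseteq\Ck{0}{\closureN{B}}$, and satisfying the unisolvency implication on $\Pp{k_{\min}-1}{B} \supseteq \Pp{k-1}{B}$—strictly, one should apply the lemma with $k$ replaced by $k_{\min}$, or note $\iota_Z$ kills all of $\Pp{k_{\min}-1}{B}$ which contains $\Pp{k-1}{B}$ only if $k_{\min}\ge k$; cleanly, take $\iota_Z$ acting on $\Hk{k}{B}$ and enlarge its kernel-detecting property to $\Pp{k-1}{B}$ by adding point evaluations at a $\Pp{k-1}$-unisolvent superset of the $x_n$'s contained in $\set{x_1,\dots,x_N}$ when $N$ is large, or simply invoke the lemma at level $k$ with $Z$ the span of enough point-evaluations), one obtains $\norm[\Hk{k}{B}]{v_n} \cleq \seminorm[\Hk{k}{B}]{v_n} + \abs{E_N v_n} = \seminorm[\Hk{k}{B}]{v_n} \cleq \seminorm[a]{v_n}$, and likewise for differences $v_n - v_m$. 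Hence $(v_n)_n$ is Cauchy in $\Hk{k}{B}$ for each such ball $B$, with limits that are consistent on overlaps, defining $v \in \HkLoc{k}{\R^d}$ with $\restrictN{v}{B} = \lim_n \restrictN{v_n}{B}$ in $\Hk{k}{B}$; then $\DN{\alpha}{v} = g_\alpha \in \Lp{2}{\R^d}$ for $k_{\min}\le\abs{\alpha}\le k$, so $v \in V$, and passing to the limit in $v_n(x_j)=0$ (using $\Hk{k}{B}\hookrightarrow \Ck{0}(\closureN{B})$) yields $E_N v = \mvemph{0}$, i.e.\ $v \in V_0$. Finally $\seminorm[a]{v - v_n} \ceq \seminorm[\Hk{k_{\min}}{\R^d}]{v-v_n} + \seminorm[\Hk{k}{\R^d}]{v-v_n} \to 0$, so $v_n \to v$ in $V_0$, proving completeness.
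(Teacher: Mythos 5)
Your proposal is correct and follows essentially the same route as the paper: unisolvency gives $V_0 \cap P = \set{0}$, the kernel characterization of $\seminorm[a]{\cdot}$ from \cref{Native_space_Props} gives definiteness on $V_0$, and completeness is obtained exactly as in the paper by combining the Poincar\'e-type inequality of \cref{Poincare_inequality} on an exhausting family of balls with the constraint $E_N v_n = \mvemph{0}$ to upgrade Cauchyness in $\seminorm[a]{\cdot}$ to Cauchyness in $\Hk{k}{B}$. Of the alternatives you float for invoking \cref{Poincare_inequality}, the clean one is the application at level $k_{\min}$ with $\iota_Z g = (g(\xi_{\alpha}))_{\abs{\alpha}<k_{\min}}$, afterwards adding the seminorms of orders $k_{\min}+1,\dots,k$ which $\seminorm[a]{\cdot}$ controls; the variant that relies on a $\Pp{k-1}{\R^d}$-unisolvent subset of $\set{x_1,\dots,x_N}$ is not guaranteed by \cref{Interpol_points} and should be dropped.
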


\begin{proof}
The identity $V_0 \cap P = \set{0}$ follows from the unisolvency assumption in \cref{Interpol_points}. Since $P$ coincides with the kernel of $\seminorm[a]{\cdot}$ (cf. \cref{Native_space_Props}), the bilinear form $\bilinear[a]{\cdot}{\cdot}$ is positive definite on $V_0$. Finally, let us sketch the proof of completeness: Consider a Cauchy sequence $(v_n)_{n \in \N} \subseteq V_0$ with respect to $\seminorm[a]{\cdot}$ and pick a sequence of bounded Lipschitz domains $B_1 \subseteq B_2 \subseteq B_3 \subseteq \dots \subseteq \R^d$, e.g., $B_i := \Ball{0}{i}$. For fixed $i \in \N$, we use \cref{Poincare_inequality} and the identity $E_N v_n = \mvemph{0} = E_N v_m$ to estimate
\begin{equation*}
\norm[\Hk{k}{B_i}]{v_n-v_m} \leq C(d,k,B_i,(x_l)_l) (\seminorm[\Hk{k}{B_i}]{v_n-v_m} + \norm[\lp{2}{N}]{E_N(v_n-v_m)}) \leq C(d,k,B_i,(x_l)_l) \seminorm[a]{v_n-v_m}.
\end{equation*}
We obtain limits $v_{B_i} \in \Hk{k}{B_i}$, $i \in \N$, and piece them together to define global functions $\fDef{\DN{\alpha}{v}}{\R^d}{\R}$, $\abs{\alpha} \leq k$, by setting $\restrict{\DN{\alpha}{v}}{B_i} := \D{\alpha}{v_{B_i}}$. One can then prove that $v$ lies in $V_0$ and that $\seminorm[a]{v-v_n} \xrightarrow{n} 0$.
\end{proof}

\subsection{An equivalent interpolation problem} \label{SSec:Interpol_problem_2}

%

The homogeneous native space $V_0$ allows us to rewrite the minimization property in \cref{Interpol_problem_1} in terms of an orthogonality relation for $\bilinear[a]{\cdot}{\cdot}$.

\begin{problem} \label{Interpol_problem_2}
Let $f \in V$. Find a function $u \in V$ that satisfies the following interpolation and orthogonality conditions:
\begin{equation*}
E_N u = E_N f, \quad \quad \quad \quad \quad \forall v \in V_0: \quad \bilinear[a]{u}{v} = 0.
\end{equation*}
\end{problem}

Using standard variational techniques, it is not difficult to show that \cref{Interpol_problem_1} and \cref{Interpol_problem_2} are indeed equivalent:
\begin{lemma} \label{Interpol_problems_equivalent}
Let $f \in V$. A function $u \in V$ solves \cref{Interpol_problem_1}, if and only if it solves \cref{Interpol_problem_2}.
\end{lemma}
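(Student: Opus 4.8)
The plan is to prove the equivalence of \cref{Interpol_problem_1} and \cref{Interpol_problem_2} by the classical argument identifying the minimizer of a convex quadratic functional over an affine subspace with the point satisfying the associated Galerkin orthogonality. The common affine constraint set is $\ElementsA_f := \Set{\tilde u \in V}{E_N \tilde u = E_N f}$, which is nonempty (it contains $f$ itself) and is a translate of the linear space $V_0$ from \cref{Hom_native_space}: indeed, if $\tilde u, u \in \ElementsA_f$ then $E_N(\tilde u - u) = \mvemph{0}$, so $\tilde u - u \in V_0$, and conversely $u + v \in \ElementsA_f$ for every $u \in \ElementsA_f$ and $v \in V_0$. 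Thus every $\tilde u \in \ElementsA_f$ can be written as $\tilde u = u + v$ with $v \in V_0$.

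First I would prove the implication ``solves \cref{Interpol_problem_2} $\Rightarrow$ solves \cref{Interpol_problem_1}''. Suppose $u \in V$ satisfies $E_N u = E_N f$ and $\bilinear[a]{u}{v} = 0$ for all $v \in V_0$. Let $\tilde u \in \ElementsA_f$ be arbitrary and write $\tilde u = u + v$ with $v := \tilde u - u \in V_0$. Then, using bilinearity and symmetry of $\bilinear[a]{\cdot}{\cdot}$ (\cref{Native_space_Props}), together with the orthogonality $\bilinear[a]{u}{v}=0$,
\begin{equation*}
\seminorm[a]{\tilde u}^2 = \bilinear[a]{u+v}{u+v} = \seminorm[a]{u}^2 + 2\bilinear[a]{u}{v} + \seminorm[a]{v}^2 = \seminorm[a]{u}^2 + \seminorm[a]{v}^2 \geq \seminorm[a]{u}^2.
\end{equation*}
Taking the infimum over $\tilde u \in \ElementsA_f$ gives $\seminorm[a]{u} \leq \inf_{\tilde u \in \ElementsA_f} \seminorm[a]{\tilde u}$, and since $u \in \ElementsA_f$ the reverse inequality is trivial; hence $u$ solves \cref{Interpol_problem_1}.

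Conversely, suppose $u \in V$ solves \cref{Interpol_problem_1}; in particular $E_N u = E_N f$, so the interpolation condition of \cref{Interpol_problem_2} already holds, and it remains to verify the orthogonality. Fix $v \in V_0$ and consider, for $t \in \R$, the competitor $u + tv \in \ElementsA_f$. The scalar function $g(t) := \seminorm[a]{u+tv}^2 = \seminorm[a]{u}^2 + 2t\,\bilinear[a]{u}{v} + t^2 \seminorm[a]{v}^2$ is a quadratic polynomial in $t$ which, by the minimization property of $u$, attains its minimum over $\R$ at $t=0$. Hence $g'(0) = 2\,\bilinear[a]{u}{v} = 0$, i.e.\ $\bilinear[a]{u}{v} = 0$. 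Since $v \in V_0$ was arbitrary, $u$ solves \cref{Interpol_problem_2}.

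There is essentially no obstacle here: the argument is the standard orthogonal-projection characterization, and all the structural facts it uses --- symmetry, bilinearity, and positive semi-definiteness of $\bilinear[a]{\cdot}{\cdot}$ on $V$, and the fact that $\ElementsA_f = u + V_0$ --- are either immediate or supplied by \cref{Native_space_Props}. The only point deserving a word of care is that one must argue over the full affine set $\ElementsA_f$ rather than over $V$, but this is handled by the decomposition $\tilde u = u + v$ with $v \in V_0$; no completeness or well-posedness of the problems is needed for the equivalence itself (those are established separately).
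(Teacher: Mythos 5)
Your proof is correct and is precisely the ``standard variational techniques'' argument that the paper invokes (it omits the proof entirely): the decomposition of the constraint set as $u+V_0$, expansion of the quadratic form for one direction, and differentiation of $t\mapsto\seminorm[a]{u+tv}^2$ at $t=0$ for the other. Both directions are complete, and the degenerate case $\seminorm[a]{v}=0$ in the converse is handled correctly since an affine function minimized over all of $\R$ must be constant.
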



The orthogonality relation in \cref{Interpol_problem_2} is more suitable for our upcoming error analysis and we will not have to deal with the original minimization property in \cref{Interpol_problem_1} any further. The next lemma answers the question of solvability and uniqueness of solutions (effectively for both problems):
\begin{lemma} \label{Interpol_problem_2_Solvable}
For every $f \in V$, \cref{Interpol_problem_2} has a unique solution $u \in V$. The mapping $f \mapsto u$ is linear and there holds the a priori bound $\seminorm[a]{u} \leq \seminorm[a]{f}$.
\end{lemma}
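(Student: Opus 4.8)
The plan is to reduce \cref{Interpol_problem_2} to a problem on the Hilbert space $(V_0, \bilinear[a]{\cdot}{\cdot})$ and invoke the Riesz--Fréchet representation theorem (equivalently, the Lax--Milgram lemma for a symmetric coercive form). First I would dispose of existence of \emph{some} interpolant: by the unisolvency assumption in \cref{Interpol_points}, the polynomial $p := \sum_{\abs{\alpha}<k_{\min}} f(\xi_\alpha) \pi_\alpha \in P \subseteq V$ satisfies $p(\xi_\alpha) = f(\xi_\alpha)$, but more to the point, picking \emph{any} $w \in V$ with $E_N w = E_N f$ (for instance, using the Lagrange-type data on all of $\set{x_1,\dots,x_N}$, or simply noting $f$ itself works) we may write every candidate solution as $u = w + v$ with $v \in V_0$. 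Then $E_N u = E_N f$ is automatic, and the orthogonality condition becomes: find $v \in V_0$ such that $\bilinear[a]{v}{\tilde v} = -\bilinear[a]{w}{\tilde v}$ for all $\tilde v \in V_0$. The simplest choice is $w := f$ itself, so that we seek $v \in V_0$ with $\bilinear[a]{v}{\tilde v} = -\bilinear[a]{f}{\tilde v}$ for all $\tilde v \in V_0$.

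Next I would verify the hypotheses of Riesz--Fréchet on $V_0$. By \cref{Hom_native_space_Props}, $(V_0,\bilinear[a]{\cdot}{\cdot})$ is a Hilbert space, so $\bilinear[a]{\cdot}{\cdot}$ is a genuine inner product there. The right-hand side $\tilde v \mapsto -\bilinear[a]{f}{\tilde v}$ is a bounded linear functional on $V_0$: linearity is clear, and boundedness follows from the Cauchy--Schwarz inequality in \cref{Native_space_Props}(5), namely $\abs{\bilinear[a]{f}{\tilde v}} \leq \seminorm[a]{f}\,\seminorm[a]{\tilde v}$. Hence there is a unique $v \in V_0$ with $\bilinear[a]{v}{\tilde v} = -\bilinear[a]{f}{\tilde v}$ for all $\tilde v \in V_0$; setting $u := f + v$ gives a solution of \cref{Interpol_problem_2}. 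For uniqueness, if $u_1, u_2$ both solve the problem, then $u_1 - u_2 \in V_0$ (both interpolate $E_N f$) and $\bilinear[a]{u_1-u_2}{\tilde v} = 0$ for all $\tilde v \in V_0$; testing with $\tilde v = u_1 - u_2$ and using positive definiteness of $\bilinear[a]{\cdot}{\cdot}$ on $V_0$ gives $u_1 = u_2$ as elements of $V_0$, i.e.\ $u_1 = u_2$ in $V$. Linearity of $f \mapsto u$ follows from uniqueness and the linearity of the construction (the map $f \mapsto v$ is linear since $E_N$ and $\bilinear[a]{\cdot}{\cdot}$ are, and the Riesz isomorphism is linear).

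Finally, for the a priori bound: $u = f + v$ with $v \in V_0$ characterized by $\bilinear[a]{v}{\tilde v} = -\bilinear[a]{f}{\tilde v}$ for all $\tilde v \in V_0$. Observe that $u \in V$ satisfies $\bilinear[a]{u}{\tilde v} = 0$ for all $\tilde v \in V_0$, so in particular $\bilinear[a]{u}{u - f} = 0$ since $u - f = v \in V_0$. Therefore $\seminorm[a]{u}^2 = \bilinear[a]{u}{u} = \bilinear[a]{u}{f} \leq \seminorm[a]{u}\,\seminorm[a]{f}$ by Cauchy--Schwarz, and dividing by $\seminorm[a]{u}$ (the case $\seminorm[a]{u}=0$ being trivial) yields $\seminorm[a]{u} \leq \seminorm[a]{f}$. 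I do not expect any real obstacle here; the only point requiring a little care is making sure the "reference interpolant" $w$ is chosen in $V$ (taking $w = f$ sidesteps this entirely) and that all manipulations stay within the seminorm, never needing a genuine norm on $V$, since $\bilinear[a]{\cdot}{\cdot}$ is only semi-definite on the full space $V$ — but all the coercivity is used only on $V_0$, where \cref{Hom_native_space_Props} supplies it.
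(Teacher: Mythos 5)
Your proposal is correct and follows essentially the same route as the paper: the paper's proof also applies the Riesz--Fr\'echet theorem on the Hilbert space $(V_0,\bilinear[a]{\cdot}{\cdot})$ to obtain $u_0 \in V_0$ with $\bilinear[a]{u_0}{v} = -\bilinear[a]{f}{v}$ and sets $u := u_0 + f$. You merely spell out the uniqueness, linearity, and a priori bound arguments that the paper leaves implicit, and these details are all correct.
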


\begin{proof}
According to the Riesz-Fr\'echet Representation Theorem there exists a unique element $u_0 \in V_0$ such that $\bilinear[a]{u_0}{v} = -\bilinear[a]{f}{v}$ for all $v \in V_0$. Then, $u := u_0 + f \in V$ is the sought solution.
%
%
%
\end{proof}

\subsection{Properties of the LSE} \label{SSec:LSE_Props}

In this section, we will show that the LSE from \cref{SSec:LSE} is indeed regular. Furthermore, we prove the asserted representation $u = \sum_{n=1}^{N} \mvemph{c}_n \phi_n + \sum_{\abs{\alpha}<k_{\min}} \mvemph{d}_{\alpha} \pi_{\alpha}$ of the solution $u \in V$ of \cref{Interpol_problem_1} (or, equivalently, \cref{Interpol_problem_2}).

\begin{definition} \label{Coord_mappings_Phi_Pi}
Denote by $\set{\phi_1,\dots,\phi_N} \subseteq \Ck{0}{\R^d}$ and $\Set{\pi_{\alpha}}{\abs{\alpha}<k_{\min}} \subseteq P$ the ansatz functions from \cref{Ansatz_functions}. We define the corresponding coordinate mappings
\begin{equation*}
\fDefB[\Phi]{\R^N}{\Ck{0}{\R^d}}{\mvemph{c}}{\sum_{n=1}^{N} \mvemph{c}_n\phi_n}, \quad \quad \quad \fDefB[\Pi]{\R^{N_{\min}}}{P}{\mvemph{d}}{\sum_{\abs{\alpha}<k_{\min}} \mvemph{d}_{\alpha}\pi_{\alpha}}.
\end{equation*}
\end{definition}

Recalling from \cref{Native_space_Props} that $P \subseteq V$, it is clear that the operator $\Pi$ always maps into the native space $V$, regardless of the input vector $\mvemph{d} \in \R^{N_{\min}}$. As for the operator $\Phi$, on the other hand, we remind the reader of \cref{Fundamental_solution}: The fundamental solution $\phi \in \Ck{0}{\R^d}$ is not necessarily an element of $V$, so we cannot guarantee $\Phi\mvemph{c} \in V$ for \emph{all} inputs $\mvemph{c} \in \R^N$. However, for the coefficient vectors $\mvemph{c}$ from the subset $\mvemph{C} \subseteq \R^N$ (cf. \cref{Coeff_set_C}), the conformity of $\Phi\mvemph{c}$ is provided by \cref{Fundamental_solution}.

\begin{lemma} \label{Coord_mappings_Phi_Pi_Props}
The operator $\fDef{\Pi}{\R^{N_{\min}}}{P}$ is bijective. For all $\mvemph{c} \in \mvemph{C}$, there holds $\Phi\mvemph{c} \in V$ with $\seminorm[a]{\Phi\mvemph{c}} \cgeq \hMin{}^{k-d/2} \norm[2]{\mvemph{c}}$. Finally, the operators $\Phi$ and $E_N$ (cf. \cref{Eval_op}) are transposed in the following sense:
\begin{equation*}
\forall \mvemph{c} \in \mvemph{C}: \forall v \in V: \quad \quad \bilinear[a]{\Phi\mvemph{c}}{v} = \skalar[2]{\mvemph{c}}{E_N v}.
\end{equation*}
\end{lemma}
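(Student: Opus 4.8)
The plan is to prove the three assertions of \cref{Coord_mappings_Phi_Pi_Props} in sequence: (i) bijectivity of $\Pi$; (ii) the transposition identity $\bilinear[a]{\Phi\mvemph{c}}{v} = \skalar[2]{\mvemph{c}}{E_N v}$; and (iii) the lower bound $\seminorm[a]{\Phi\mvemph{c}} \cgeq \hMin{}^{k-d/2} \norm[2]{\mvemph{c}}$. I would do them in this order because the transposition identity is the workhorse that makes (iii) tractable, and the conformity $\Phi\mvemph{c} \in V$ for $\mvemph{c} \in \mvemph{C}$ is already furnished by item $(3)$ of \cref{Fundamental_solution}.

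\emph{Bijectivity of $\Pi$.} This is immediate: $\Pi$ maps $\R^{N_{\min}}$ to $P = \Pp{k_{\min}-1}{\R^d}$, and by \cref{Ansatz_functions} the functions $\Set{\pi_{\alpha}}{\abs{\alpha}<k_{\min}}$ are the Lagrange basis dual to the unisolvent set $\Set{\xi_{\alpha}}{\abs{\alpha}<k_{\min}}$. Since $\dimN{P} = N_{\min}$ and a Lagrange basis is a basis, $\Pi$ is a linear bijection between spaces of equal finite dimension; injectivity follows from evaluating $\Pi\mvemph{d}$ at the $\xi_\alpha$ and using unisolvency (if $p \in P$ vanishes at all $\xi_\alpha$ then $p=0$).

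\emph{The transposition identity.} Fix $\mvemph{c} \in \mvemph{C}$. For $v \in \CkO{\infty}{\R^d}$, combine \cref{Bil_form_Diff_op} (applicable since $\Phi\mvemph{c} \in V$ by conformity) with linearity and the fundamental-solution property $(2)$ of \cref{Fundamental_solution}:
\begin{equation*}
\bilinear[a]{\Phi\mvemph{c}}{v} = \I{\R^d}{(\Phi\mvemph{c})(\DN{2k}{v})}{x} = \sum_{n=1}^N \mvemph{c}_n \I{\R^d}{\phi(x-x_n)(\DN{2k}{v})(x)}{x} = \sum_{n=1}^N \mvemph{c}_n v(x_n) = \skalar[2]{\mvemph{c}}{E_N v}.
\end{equation*}
This establishes the identity on the dense subset $\CkO{\infty}{\R^d} \subseteq V$ (density in the sense of \cref{Native_space_Density}). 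To pass to general $v \in V$: given $v$, take $(v_j)_j \subseteq \CkO{\infty}{\R^d}$ with $\seminorm[a]{v-v_j} \to 0$ and $\skalar[2]{\mvemph{c}}{E_N(v-v_j)} \to 0$. The left-hand side converges by the Cauchy–Schwarz inequality of \cref{Native_space_Props}$(5)$, namely $\abs{\bilinear[a]{\Phi\mvemph{c}}{v-v_j}} \leq \seminorm[a]{\Phi\mvemph{c}}\seminorm[a]{v-v_j} \to 0$, and the right-hand side converges by the second property of the chosen sequence. Hence the identity holds for all $v \in V$.

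\emph{The lower bound.} This is the main obstacle. The idea is to test the transposition identity against a carefully chosen $v \in V_0$ that ``sees'' the coefficient vector $\mvemph{c}$. Concretely, I would build, for each $n$, a bump function $\psi_n \in \CkO{\infty}{\R^d}$ supported in the bubble $\Bubbles{n} = \Ball[2]{x_n}{\hMin{}}$ with $\psi_n(x_n)=1$, obtained by scaling a fixed reference bump; these have disjoint supports since the bubbles are pairwise disjoint. Set $v := \sum_{n=1}^N \mvemph{c}_n \psi_n \in \CkO{\infty}{\R^d} \subseteq V$. Then $E_N v = \mvemph{c}$ componentwise (because $\psi_m(x_n) = \kronecker{mn}$ on the relevant supports), so applying the transposition identity gives $\bilinear[a]{\Phi\mvemph{c}}{v} = \skalar[2]{\mvemph{c}}{E_N v} = \norm[2]{\mvemph{c}}^2$. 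By Cauchy–Schwarz, $\norm[2]{\mvemph{c}}^2 = \bilinear[a]{\Phi\mvemph{c}}{v} \leq \seminorm[a]{\Phi\mvemph{c}}\,\seminorm[a]{v}$, so it remains to bound $\seminorm[a]{v} \cleq \hMin{}^{d/2-k}\norm[2]{\mvemph{c}}$. This is a scaling estimate: on each bubble of radius $\hMin{}$, writing $\psi_n(x) = \psi(({x-x_n})/\hMin{})$ for a fixed reference bump $\psi$, one has $\norm[\Lp{2}{\Bubbles{n}}]{\DN{\alpha}{\psi_n}}^2 = \hMin{}^{d-2\abs{\alpha}}\norm[\Lp{2}{\Ball[2]{0}{1}}]{\DN{\alpha}{\psi}}^2$; summing over $l = k_{\min},\dots,k$ and over $\abs{\alpha}=l$, the disjointness of supports yields
\begin{equation*}
\seminorm[a]{v}^2 = \sum_{n=1}^N \mvemph{c}_n^2 \sum_{l=k_{\min}}^k \sigma_l \sum_{\abs{\alpha}=l}\frac{l!}{\alpha!}\hMin{}^{d-2l}\norm[\Lp{2}{\Ball[2]{0}{1}}]{\DN{\alpha}{\psi}}^2 \cleq \norm[2]{\mvemph{c}}^2 \sum_{l=k_{\min}}^k \hMin{}^{d-2l} \cleq \norm[2]{\mvemph{c}}^2\,\hMin{}^{d-2k},
\end{equation*}
using $\hMin{} \cleq 1$ (from the boundedness assumption in \cref{Interpol_points}) so that the smallest power $\hMin{}^{d-2k}$ dominates. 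Taking square roots gives $\seminorm[a]{v} \cleq \hMin{}^{d/2-k}\norm[2]{\mvemph{c}}$, and combining with the Cauchy–Schwarz bound yields $\norm[2]{\mvemph{c}}^2 \cleq \seminorm[a]{\Phi\mvemph{c}}\,\hMin{}^{d/2-k}\norm[2]{\mvemph{c}}$, i.e. $\seminorm[a]{\Phi\mvemph{c}} \cgeq \hMin{}^{k-d/2}\norm[2]{\mvemph{c}}$, as claimed. The only subtlety to watch is that the reference bump $\psi$ must be fixed independently of $N$ and of the point configuration, which is possible since all bubbles have the same radius $\hMin{}$ and one only needs $\psi \in \CkO{\infty}{\Ball[2]{0}{1}}$ with $\psi(0)=1$; the implied constants then depend only on $d,k,k_{\min}$ and the $\sigma_l$, consistent with the paper's conventions.
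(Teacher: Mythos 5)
Your proposal is correct and follows essentially the same route as the paper: the bijectivity and transposition arguments are identical, and your bump functions $\psi_n$ are exactly the paper's dual functions $\lambda_n$ from \cref{Dual_functions}, so your lower-bound argument (test the transposition identity against $\sum_n \mvemph{c}_n\psi_n$, apply Cauchy--Schwarz, and use the scaling bound $\seminorm[a]{\sum_n \mvemph{c}_n\psi_n} \cleq \hMin{}^{d/2-k}\norm[2]{\mvemph{c}}$ via disjoint supports) coincides with the paper's use of the operator $\Lambda$ and \cref{Coord_mapping_Lambda_Props}. The only difference is organizational: the paper defers the scaling estimate to a later lemma and anticipates it, whereas you prove it inline.
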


\begin{proof}
The bijectivity of $\Pi$ follows from the fact that $\Set{\pi_{\alpha}}{\abs{\alpha}<k_{\min}} \subseteq P$ is a basis. Next, let us prove the transposition formula: For every $\mvemph{c} \in \mvemph{C}$, we know from \cref{Fundamental_solution} that $\Phi\mvemph{c} = \sum_{n=1}^{N} \mvemph{c}_n \phi_n \in V$. Now, for all $v \in \CkO{\infty}{\R^d}$, we compute
\begin{equation*}
\bilinear[a]{\Phi\mvemph{c}}{v} \stackrel{\cref{Bil_form_Diff_op}}{=} \I{\R^d}{(\Phi\mvemph{c})(\DN{2k}{v})}{x} = \sum_{n=1}^{N} \mvemph{c}_n \I{\R^d}{\phi(x-x_n)(\DN{2k}{v})(x)}{x} \stackrel{\cref{Fundamental_solution}}{=} \sum_{n=1}^{N} \mvemph{c}_n v(x_n) = \skalar[2]{\mvemph{c}}{E_N v}.
\end{equation*}
To extend this identity from $\CkO{\infty}{\R^d}$ to all of $V$, we use a simple density argument: Let $\mvemph{c} \in \mvemph{C}$ and $v \in V$. We know from \cref{Native_space_Density} that there exists a sequence $(v_n)_{n \in \N} \subseteq \CkO{\infty}{\R^d}$ such that $\seminorm[a]{v-v_n} \xrightarrow{n} 0$ and $\abs{\skalar[2]{\mvemph{c}}{E_N v}-\skalar[2]{\mvemph{c}}{E_N v_n}} \xrightarrow{n} 0$. Using the Cauchy-Schwarz inequality from \cref{Native_space_Props}, we find that $\abs{\bilinear[a]{\Phi\mvemph{c}}{v}-\bilinear[a]{\Phi\mvemph{c}}{v_n}} \leq \seminorm[a]{\Phi\mvemph{c}} \seminorm[a]{v-v_n} \xrightarrow{n} 0$ as well. Then, since $v_n \in \CkO{\infty}{\R^d}$, 
\begin{equation*}
\bilinear[a]{\Phi\mvemph{c}}{v} = \lim_{n \rightarrow \infty} \bilinear[a]{\Phi\mvemph{c}}{v_n} = \lim_{n \rightarrow \infty} \skalar[2]{\mvemph{c}}{E_N v_n} = \skalar[2]{\mvemph{c}}{E_N v}.
\end{equation*}

Finally, let $\mvemph{c} \in \mvemph{C}$ and denote by $\fDef{\Lambda}{\R^N}{V}$ the operator from \cref{Coord_mapping_Lambda} further below. Anticipating the results from \cref{Coord_mapping_Lambda_Props}, we know that $E_N \Lambda \mvemph{c} = \mvemph{c}$ and that $\seminorm[a]{\Lambda\mvemph{c}} \cleq \hMin{}^{d/2-k} \norm[2]{\mvemph{c}}$. Then, using the transposition formula from above and the Cauchy-Schwarz inequality from \cref{Native_space_Props}, we obtain the bound
\begin{equation*}
\norm[2]{\mvemph{c}}^2 = \skalar[2]{\mvemph{c}}{\mvemph{c}} = \skalar[2]{\mvemph{c}}{E_N \Lambda \mvemph{c}} = \bilinear[a]{\Phi\mvemph{c}}{\Lambda\mvemph{c}} \leq \seminorm[a]{\Phi\mvemph{c}} \seminorm[a]{\Lambda\mvemph{c}} \cleq \hMin{}^{d/2-k} \seminorm[a]{\Phi\mvemph{c}} \norm[2]{\mvemph{c}}.
\end{equation*}
We divide both sides by $\hMin{}^{d/2-k} \norm[2]{\mvemph{c}}$ and get the desired inequality.
\end{proof}

In the case $k_{\min}=0$, the transposition formula in \cref{Coord_mappings_Phi_Pi_Props} reduces to $\bilinear[a]{\phi_n}{v} = v(x_n)$ for all $n \in \set{1,\dots,N}$ and all $v \in V$. This identity is reminiscent of \cref{Fundamental_solution_Ex_0}, where the identity $\bilinear[a]{\phi}{v} = v(0)$, $v \in V$, was used to \emph{define} the basis function $\phi$. Furthermore, let it be said that the transposition formula must be handled with care, if $k_{\min}>0$. The left-hand side reads $\bilinear[a]{\Phi\mvemph{c}}{v} = \bilinear[a]{\sum_{n=1}^{N} \mvemph{c}_n \phi_n}{v}$, with the sum \emph{inside} of $\bilinear[a]{\cdot}{\cdot}$. This is a perfectly valid expression, because \cref{Fundamental_solution} guarantees that $\sum_{n=1}^{N} \mvemph{c}_n \phi_n$ lies in $V$, whenever $\mvemph{c} \in \mvemph{C}$. However, we \emph{cannot} pull the sum out of $\bilinear[a]{\cdot}{\cdot}$. In fact, the expression $\sum_{n=1}^{N} \mvemph{c}_n \bilinear[a]{\phi_n}{v}$ is not well-defined, since $\phi_n$ need not lie in $V$ and may not be plugged into $\bilinear[a]{\cdot}{\cdot}$ on its own.

Next, let us have a look at the properties of the matrices $\mvemph{A} \in \R^{N \times N}$, $\mvemph{B} \in \R^{N_{\min} \times N}$ and $(\begin{smallmatrix} \mvemph{A} & \mvemph{B}^T \\ \mvemph{B} & \mvemph{0} \end{smallmatrix}) \in \R^{(N+N_{\min})\times(N+N_{\min})}$ from \cref{LSE_Matrices}. Using the evaluation operator $\fDef{E_N}{\Ck{0}{\R^d}}{\R^N}$ from \cref{Eval_op} once again, we see that $\mvemph{A}$ can be written in the form $\mvemph{A} = (E_N \phi_1|\dots|E_N \phi_N)$ and that $\mvemph{B}^T$ can be written as $\mvemph{B}^T = (E_N \pi_1|\dots|E_N \pi_{N_{\min}})$, if a linear ordering of the polynomial basis is assumed. The next lemma uses these representations.

\begin{lemma} \label{LSE_invertible}
\leavevmode
\begin{enumerate}
\item For all $\mvemph{d} \in \R^{N_{\min}}$, there holds the identity $\mvemph{B}^T \mvemph{d} = E_N \Pi \mvemph{d}$. In particular, $\mvemph{B}^T$ is injective and $\mvemph{B}$ is surjective. The kernel of $\mvemph{B}$ is given by $\ker{\mvemph{B}} = \mvemph{C}$.

\item For all $\mvemph{c} \in \R^N$, there holds the identity $\mvemph{A}\mvemph{c} = E_N \Phi \mvemph{c}$. Furthermore,
\begin{equation*}
\forall \mvemph{c} \in \mvemph{C}: \quad \quad \skalar[2]{\mvemph{A}{\mvemph{c}}}{\mvemph{c}} \cgeq \hMin{}^{2k-d} \norm[2]{\mvemph{c}}^2.
\end{equation*}

\item The interpolation matrix $(\begin{smallmatrix} \mvemph{A} & \mvemph{B}^T \\ \mvemph{B} & \mvemph{0} \end{smallmatrix})$ is invertible.
\end{enumerate}

\end{lemma}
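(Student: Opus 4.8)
The plan is to prove the three items by directly unwinding the definitions of $\mvemph{A}$, $\mvemph{B}$, $\Phi$, $\Pi$ and then invoking the transposition formula and the coercivity bound from \cref{Coord_mappings_Phi_Pi_Props}; no genuinely new analysis is needed, and the power of $\hMin{}$ in item $(2)$ is simply inherited from there. For item $(1)$ I would read off from \cref{LSE_Matrices} that $(\mvemph{B}^T\mvemph{d})_n = \sum_{\abs{\beta}<k_{\min}}\mvemph{d}_\beta\pi_\beta(x_n) = (\Pi\mvemph{d})(x_n)$, so $\mvemph{B}^T\mvemph{d} = E_N\Pi\mvemph{d}$. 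If $\mvemph{B}^T\mvemph{d} = \mvemph{0}$, then $\Pi\mvemph{d}\in P$ vanishes at all interpolation points, in particular at the unisolvent subset $\Set{\xi_{\alpha}}{\abs{\alpha}<k_{\min}}$ of \cref{Interpol_points}, hence $\Pi\mvemph{d}=0$, and bijectivity of $\Pi$ (\cref{Coord_mappings_Phi_Pi_Props}) forces $\mvemph{d}=\mvemph{0}$; thus $\mvemph{B}^T$ is injective and $\mvemph{B}=(\mvemph{B}^T)^T$ is surjective. Finally $\mvemph{B}\mvemph{c}=\mvemph{0}$ is equivalent to $\skalar[2]{\mvemph{c}}{E_N\pi_\beta}=0$ for every basis polynomial $\pi_\beta$, hence by linearity to $\skalar[2]{\mvemph{c}}{E_N p}=0$ for all $p\in P$, i.e.\ to $\mvemph{c}\in\mvemph{C}$; so $\ker{\mvemph{B}}=\mvemph{C}$.

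For item $(2)$ the analogous row-wise computation gives $(\mvemph{A}\mvemph{c})_m=\sum_{n=1}^{N}\mvemph{c}_n\phi_n(x_m)=(\Phi\mvemph{c})(x_m)$, i.e.\ $\mvemph{A}\mvemph{c}=E_N\Phi\mvemph{c}$. For the lower bound I would fix $\mvemph{c}\in\mvemph{C}$; then \cref{Fundamental_solution} guarantees $\Phi\mvemph{c}\in V$, so $v:=\Phi\mvemph{c}$ is an admissible test function in the transposition formula of \cref{Coord_mappings_Phi_Pi_Props}, which together with $\mvemph{A}\mvemph{c}=E_N\Phi\mvemph{c}$ and the symmetry of the Euclidean inner product gives $\skalar[2]{\mvemph{A}\mvemph{c}}{\mvemph{c}}=\skalar[2]{\mvemph{c}}{E_N\Phi\mvemph{c}}=\bilinear[a]{\Phi\mvemph{c}}{\Phi\mvemph{c}}=\seminorm[a]{\Phi\mvemph{c}}^2$. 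Combining this with the bound $\seminorm[a]{\Phi\mvemph{c}}\cgeq\hMin{}^{k-d/2}\norm[2]{\mvemph{c}}$ from \cref{Coord_mappings_Phi_Pi_Props} yields $\skalar[2]{\mvemph{A}\mvemph{c}}{\mvemph{c}}\cgeq\hMin{}^{2k-d}\norm[2]{\mvemph{c}}^2$. For item $(3)$, since the interpolation matrix is square it suffices to show that its kernel is trivial: I would take $\mvemph{c}\in\R^N$, $\mvemph{d}\in\R^{N_{\min}}$ with $\mvemph{A}\mvemph{c}+\mvemph{B}^T\mvemph{d}=\mvemph{0}$ and $\mvemph{B}\mvemph{c}=\mvemph{0}$; the second relation and item $(1)$ put $\mvemph{c}\in\ker{\mvemph{B}}=\mvemph{C}$, and testing the first relation with $\mvemph{c}$ together with $\skalar[2]{\mvemph{B}^T\mvemph{d}}{\mvemph{c}}=\skalar[2]{\mvemph{d}}{\mvemph{B}\mvemph{c}}=0$ leaves $\skalar[2]{\mvemph{A}\mvemph{c}}{\mvemph{c}}=0$, whence $\mvemph{c}=\mvemph{0}$ by item $(2)$; then $\mvemph{B}^T\mvemph{d}=\mvemph{0}$ and the injectivity of $\mvemph{B}^T$ give $\mvemph{d}=\mvemph{0}$.

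I do not expect a real obstacle here, as the argument is essentially linear algebra built on top of \cref{Coord_mappings_Phi_Pi_Props}. The one point that needs care — and the reason items $(2)$ and $(3)$ genuinely invoke the hypotheses rather than pure matrix manipulation — is that $\Phi\mvemph{c}$ is a legitimate argument of $\bilinear[a]{\cdot}{\cdot}$ only when $\mvemph{c}\in\mvemph{C}$: the sum $\sum_{n}\mvemph{c}_n\phi_n$ must be kept inside $\bilinear[a]{\cdot}{\cdot}$, since the individual translates $\phi_n$ need not lie in $V$, as stressed after \cref{Coord_mappings_Phi_Pi_Props}. This is exactly why the coercivity estimate is confined to the subspace $\mvemph{C}$ and why invertibility of the full saddle-point matrix hinges on its lower block forcing $\mvemph{c}\in\ker{\mvemph{B}}=\mvemph{C}$.
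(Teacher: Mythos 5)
Your proposal is correct and follows essentially the same route as the paper: the row-wise identities $\mvemph{B}^T\mvemph{d}=E_N\Pi\mvemph{d}$ and $\mvemph{A}\mvemph{c}=E_N\Phi\mvemph{c}$, the transposition formula and coercivity from \cref{Coord_mappings_Phi_Pi_Props} applied on $\mvemph{C}$, and the standard saddle-point kernel argument for invertibility. The only cosmetic difference is that you establish injectivity of $\mvemph{B}^T$ directly from unisolvency, whereas the paper routes it through the identity $V_0\cap P=\set{0}$ of \cref{Hom_native_space_Props}, which rests on the same fact.
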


\begin{proof}
For all $\mvemph{d} \in \R^{N_{\min}}$, we have $\mvemph{B}^T \mvemph{d} = \sum_{\abs{\alpha}<k} \mvemph{d}_{\alpha} E_N \pi_{\alpha} = E_N(\sum_{\abs{\alpha}<k} \mvemph{d}_{\alpha} \pi_{\alpha}) = E_N \Pi \mvemph{d}$. In particular, using the bijectivity of $\Pi$ and \cref{Hom_native_space_Props}, we find that $\ker{\mvemph{B}^T} = \ker{E_N \circ \Pi} = V_0 \cap P = \set{0}$. This proves that $\mvemph{B}^T$ is injective and that $\mvemph{B}$ is surjective. Next, we have
\begin{equation*}
\ker{\mvemph{B}} = \Set{\mvemph{c} \in \R^N}{\mvemph{B}\mvemph{c}=\mvemph{0}} = \Set{\mvemph{c}}{\forall \mvemph{d} \in \R^{N_{\min}}: \skalar[2]{\mvemph{B}\mvemph{c}}{\mvemph{d}} = 0} = \Set{\mvemph{c}}{\forall \mvemph{d} \in \R^{N_{\min}}: \skalar[2]{\mvemph{c}}{E_N \Pi \mvemph{d}} = 0} = \mvemph{C}.
\end{equation*}

Now, for all $\mvemph{c} \in \R^N$, there holds $\mvemph{A}\mvemph{c} = \sum_{n=1}^{N} \mvemph{c}_n E_N \phi_n = E_N(\sum_{n=1}^{N} \mvemph{c}_n \phi_n) = E_N \Phi \mvemph{c}$. In particular, for all $\mvemph{c} \in \mvemph{C}$, 
\begin{equation*}
\skalar[2]{\mvemph{A}\mvemph{c}}{\mvemph{c}} = \skalar[2]{E_N \Phi \mvemph{c}}{\mvemph{c}} \stackrel{\cref{Coord_mappings_Phi_Pi_Props}}{=} \bilinear[a]{\Phi\mvemph{c}}{\Phi\mvemph{c}} = \seminorm[a]{\Phi\mvemph{c}}^2 \stackrel{\cref{Coord_mappings_Phi_Pi_Props}}{\cgeq} \hMin{}^{2k-d} \norm[2]{\mvemph{c}}^2.
\end{equation*}

Finally, consider coefficient vectors $\mvemph{c} \in \R^N$ and $\mvemph{d} \in \R^{N_{\min}}$ such that $(\begin{smallmatrix} \mvemph{A} & \mvemph{B}^T \\ \mvemph{B} & \mvemph{0} \end{smallmatrix}) (\begin{smallmatrix} \mvemph{c} \\ \mvemph{d} \end{smallmatrix}) = (\begin{smallmatrix} \mvemph{0} \\ \mvemph{0} \end{smallmatrix})$. The second row tells us that $\mvemph{B}\mvemph{c} = \mvemph{0}$, so that $\mvemph{c} \in \ker{\mvemph{B}} = \mvemph{C}$. Then, we multiply the first row with $\mvemph{c}$ and get $0 = \skalar{\mvemph{A}\mvemph{c}+\mvemph{B}^T\mvemph{d}}{\mvemph{c}} = \skalar{\mvemph{A}\mvemph{c}}{\mvemph{c}} + \skalar{\mvemph{d}}{\mvemph{B}\mvemph{c}} = \skalar{\mvemph{A}\mvemph{c}}{\mvemph{c}} \cgeq \hMin{}^{2k-d} \norm[2]{\mvemph{c}}^2$, which implies $\mvemph{c} = \mvemph{0}$. Now the first row simplifies to $\mvemph{B}^T\mvemph{d}=\mvemph{0}$, from which we obtain $\mvemph{d}=\mvemph{0}$.
\end{proof}

At this point we have all the ingredients to show that the solution $u \in V$ of \cref{Interpol_problem_2} can be represented by means of the ansatz functions $\phi_n$ and $\pi_{\alpha}$ from \cref{Ansatz_functions}.

\begin{lemma} \label{LSE_cd_to_u}
Let $f \in V$ and set $\mvemph{f} := E_N f \in \R^N$. Denote by $\mvemph{c} \in \R^N$ and $\mvemph{d} \in \R^{\N_{\min}}$ the unique solution of the following LSE:
\begin{equation*}
\(\begin{matrix} \mvemph{A} & \mvemph{B}^T \\ \mvemph{B} & \mvemph{0} \end{matrix}\) \(\begin{matrix} \mvemph{c} \\ \mvemph{d} \end{matrix}\) = \(\begin{matrix} \mvemph{f} \\ \mvemph{0} \end{matrix}\).
\end{equation*}
Then, the function
\begin{equation*}
u := \Phi\mvemph{c} + \Pi\mvemph{d} = \sum_{n=1}^{N} \mvemph{c}_n \phi_n + \sum_{\abs{\alpha}<k_{\min}} \mvemph{d}_{\alpha} \pi_{\alpha} \in V
\end{equation*}
coincides with the unique solution of \cref{Interpol_problem_2}.
\end{lemma}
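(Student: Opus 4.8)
The plan is to verify directly that the proposed function $u := \Phi\mvemph{c} + \Pi\mvemph{d}$ satisfies the two defining conditions of \cref{Interpol_problem_2}, and then to invoke the uniqueness statement of \cref{Interpol_problem_2_Solvable}. The first thing to settle is that $u$ actually lies in $V$. Reading off the second row of the LSE gives $\mvemph{B}\mvemph{c} = \mvemph{0}$, so by \cref{LSE_invertible}~(1) the coefficient vector lies in $\mvemph{C} = \ker{\mvemph{B}}$. Consequently the conformity property in \cref{Fundamental_solution}~(3) yields $\Phi\mvemph{c} \in V$, and since $\Pi\mvemph{d} \in P \subseteq V$ by \cref{Native_space_Props}, we obtain $u = \Phi\mvemph{c} + \Pi\mvemph{d} \in V$.

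Next I would check the interpolation condition. Using the identities $\mvemph{A}\mvemph{c} = E_N\Phi\mvemph{c}$ and $\mvemph{B}^T\mvemph{d} = E_N\Pi\mvemph{d}$ from \cref{LSE_invertible}~(1)--(2), the first row of the LSE reads $E_N\Phi\mvemph{c} + E_N\Pi\mvemph{d} = \mvemph{f}$, i.e.\ $E_N u = \mvemph{f} = E_N f$. For the orthogonality condition, let $v \in V_0$, so that $E_N v = \mvemph{0}$, and split $\bilinear[a]{u}{v} = \bilinear[a]{\Phi\mvemph{c}}{v} + \bilinear[a]{\Pi\mvemph{d}}{v}$. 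The second summand vanishes because $\Pi\mvemph{d} \in P$ and $P$ is $\bilinear[a]{\cdot}{\cdot}$-orthogonal to all of $V$ by \cref{Native_space_Props}~(3). For the first summand, the transposition formula in \cref{Coord_mappings_Phi_Pi_Props}, applicable precisely because $\mvemph{c} \in \mvemph{C}$, gives $\bilinear[a]{\Phi\mvemph{c}}{v} = \skalar[2]{\mvemph{c}}{E_N v} = 0$. Hence $\bilinear[a]{u}{v} = 0$ for all $v \in V_0$, and $u$ solves \cref{Interpol_problem_2}; by the uniqueness part of \cref{Interpol_problem_2_Solvable}, it is the unique such solution.

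I do not expect a genuine obstacle here: the statement is essentially a bookkeeping exercise that assembles the matrix representations $\mvemph{A}\mvemph{c} = E_N\Phi\mvemph{c}$ and $\mvemph{B}^T\mvemph{d} = E_N\Pi\mvemph{d}$, the transposition formula, and the $P$-orthogonality of $\bilinear[a]{\cdot}{\cdot}$. The one point deserving attention is to notice that the bottom block of the LSE is exactly what forces $\mvemph{c} \in \mvemph{C}$, and hence $\Phi\mvemph{c} \in V$ via \cref{Fundamental_solution}~(3); without this, neither $u$ itself nor the invocation of the transposition formula from \cref{Coord_mappings_Phi_Pi_Props} would be meaningful.
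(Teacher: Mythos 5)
Your proposal is correct and follows essentially the same route as the paper: read off $\mvemph{c}\in\mvemph{C}$ from the second block row to get conformity, use the identities $\mvemph{A}\mvemph{c}=E_N\Phi\mvemph{c}$ and $\mvemph{B}^T\mvemph{d}=E_N\Pi\mvemph{d}$ for the interpolation condition, and combine the $P$-orthogonality of $\bilinear[a]{\cdot}{\cdot}$ with the transposition formula for the orthogonality condition, concluding by uniqueness. No gaps.
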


\begin{proof}
The second row of the LSE tells us that $\mvemph{B}\mvemph{c}=\mvemph{0}$, so that $\mvemph{c} \in \ker{\mvemph{B}} = \mvemph{C}$, by \cref{LSE_invertible}. Owing to \cref{Native_space_Props} and \cref{Coord_mappings_Phi_Pi_Props}, the function $u := \Phi\mvemph{c} + \Pi\mvemph{d}$ then lies in $V$. Using \cref{LSE_invertible} again, the first row of the LSE yields $E_N u = E_N\Phi\mvemph{c} + E_N\Pi\mvemph{d} = \mvemph{A}\mvemph{c} + \mvemph{B}^T\mvemph{d} = \mvemph{f} = E_N f$. Furthermore, for all $v \in V_0$, there holds the following orthogonality:
\begin{equation*}
\bilinear[a]{u}{v} = \bilinear[a]{\Phi\mvemph{c}+\Pi\mvemph{d}}{v} \stackrel{\cref{Native_space_Props}}{=} \bilinear[a]{\Phi\mvemph{c}}{v} \stackrel{\cref{Coord_mappings_Phi_Pi_Props}}{=} \skalar[2]{\mvemph{c}}{E_N v} \stackrel{\cref{Hom_native_space}}{=} \skalar[2]{\mvemph{c}}{\mvemph{0}} = 0.
\end{equation*}
Therefore, the function $u$ is a solution of \cref{Interpol_problem_2} with respect to the data $f$. By uniqueness, $u$ is \emph{the} solution. This concludes the proof.
\end{proof}

\subsection{The representation formula for the inverse matrix} \label{SSec:Rep_formula}

Earlier, in \cref{LSE_invertible}, we developed the fact that the interpolation matrix $(\begin{smallmatrix} \mvemph{A} & \mvemph{B}^T \\ \mvemph{B} & \mvemph{0} \end{smallmatrix})$ from \cref{LSE_Matrices} is invertible. Analogous to \cref{Main_result}, let us write its inverse in the form $(\begin{smallmatrix} \mvemph{A} & \mvemph{B}^T \\ \mvemph{B} & \mvemph{0} \end{smallmatrix})^{-1} = (\begin{smallmatrix} \mvemph{S_{11}} & \mvemph{S_{12}} \\ \mvemph{S_{21}} & \mvemph{S_{22}} \end{smallmatrix})$ with matrices $\mvemph{S_{11}} \in \R^{N \times N}$, $\mvemph{S_{21}} \in \R^{N_{\min} \times N}$, $\mvemph{S_{12}} \in \R^{N \times N_{\min}}$ and $\mvemph{S_{22}} \in \R^{N_{\min} \times N_{\min}}$. In this section, we develop a representation formula for the main block $\mvemph{S_{11}}$.

Recall from \cref{Interpol_problem_2_Solvable} that the mapping $f \mapsto u$ of data $f \in V$ to the solution $u \in V$ of \cref{Interpol_problem_2} is a linear operator. Clearly, this abstract operator must play a prominent role in the sought representation formula.
\begin{definition} \label{Solution_op}
For every $f \in V$, denote by $S_N f \in V$ the unique solution of \cref{Interpol_problem_2}, i.e.,
\begin{equation*}
E_N S_N f = E_N f, \quad \quad \quad \quad \quad \forall v \in V_0: \quad \bilinear[a]{S_N f}{v} = 0.
\end{equation*}

The linear operator $\fDef{S_N}{V}{V}$ is called \emph{solution operator}.
\end{definition}

Next, we need a way to map a given coefficient vector $\mvemph{f} \in \R^N$ to a function $f \in V$ with $E_N f = \mvemph{f}$. Since $\CkO{\infty}{\R^d} \subseteq V$ (cf. \cref{Native_space_Props}), such a function $f$ can easily be constructed from a family of \emph{dual functions} $\lambda_1,\dots,\lambda_N \in \CkO{\infty}{\R^d}$ with $\lambda_m(x_n) = \kronecker{mn}$ (Kronecker-$\delta$). In fact, the function $f := \sum_{n=1}^{N} \mvemph{f}_n \lambda_n \in \CkO{\infty}{\R^d} \subseteq V$ does the job. It is not surprising that the definition of the dual functions depends on the separation distance $\hMin{}$ from \cref{Interpol_points}:
\begin{definition} \label{Dual_functions}
Let $\lambda \in \CkO{\infty}{\R}$ be a function with $\supp{\lambda} \subseteq \Ball[2]{0}{1}$ and $\lambda(0) = 1$. For every $m \in \set{1,\dots,N}$, we define the function $\lambda_m(x) := \lambda((x-x_m)/\hMin{})$. We refer to $\set{\lambda_1,\dots,\lambda_N}$ as the set of \emph{dual functions}.
\end{definition}

The basic properties of the dual functions $\lambda_m$ are summarized in the next lemma. Due to its simplicity, the proof is omitted. We remind the reader of \cref{Bubbles}, where the bubbles $\Omega_m \subseteq \R^d$ were introduced.
\begin{lemma} \label{Dual_functions_Props}
For all $m,n \in \set{1,\dots,N}$, there holds $\lambda_m \in \CkO{\infty}{\R^d} \subseteq V$, $\supp{\lambda_m} \subseteq \Bubbles{m}$ and $\lambda_m(x_n) = \kronecker{mn}$. Furthermore, we have the stability bound $\seminorm[a]{\lambda_m} \cleq \hMin{}^{d/2-k}$.
\end{lemma}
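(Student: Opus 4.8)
The plan is to check the three pointwise assertions by inspection and the norm bound by a standard scaling argument; everything is routine, because $\lambda_m$ is nothing but an affine rescaling of the fixed reference bump $\lambda$.

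\emph{Regularity, support, conformity.} First I would write $\lambda_m = \lambda \circ A_m$ with the affine map $A_m(x) := (x-x_m)/\hMin{}$. Then $\lambda_m \in \Ck{\infty}{\R^d}$ as a composition of a smooth function with an affine one, and $\supp{\lambda_m} = A_m^{-1}(\supp{\lambda}) \subseteq A_m^{-1}(\Ball[2]{0}{1}) = \Ball[2]{x_m}{\hMin{}} = \Bubbles{m}$; since $\Bubbles{m}$ is bounded this already gives $\lambda_m \in \CkO{\infty}{\R^d}$, and the inclusion $\CkO{\infty}{\R^d} \subseteq V$ is part of item $(6)$ of \cref{Native_space_Props}.

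\emph{Interpolation property.} For $\lambda_m(x_n) = \kronecker{mn}$ I would simply evaluate $\lambda_m(x_n) = \lambda((x_n-x_m)/\hMin{})$: if $n=m$ this equals $\lambda(0) = 1$ by construction of $\lambda$; if $n \neq m$, the definition of the separation distance in \cref{Interpol_points} yields $\norm[2]{x_n-x_m} \geq 2\hMin{} > \hMin{}$, so $(x_n-x_m)/\hMin{} \notin \Ball[2]{0}{1} \supseteq \supp{\lambda}$ and hence $\lambda_m(x_n) = 0$.

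\emph{Stability bound.} Here I would use item $(7)$ of \cref{Native_space_Props} to reduce $\seminorm[a]{\lambda_m}$ to a bound on the Sobolev seminorms $\seminorm[\Hk{l}{\R^d}]{\lambda_m}$ for $l \in \set{k_{\min},\dots,k}$. The chain rule gives $\DN{\alpha}{\lambda_m} = \hMin{}^{-\abs{\alpha}} (\DN{\alpha}{\lambda}) \circ A_m$, and the substitution $y = A_m(x)$, $\mathrm{d}x = \hMin{}^d\,\mathrm{d}y$, turns this into $\norm[\Lp{2}{\R^d}]{\DN{\alpha}{\lambda_m}}^2 = \hMin{}^{d-2l} \norm[\Lp{2}{\R^d}]{\DN{\alpha}{\lambda}}^2$ for $\abs{\alpha}=l$; summing over $\abs{\alpha}=l$ yields $\seminorm[\Hk{l}{\R^d}]{\lambda_m} = \hMin{}^{d/2-l}\seminorm[\Hk{l}{\R^d}]{\lambda}$. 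Since the reference quantities $\seminorm[\Hk{l}{\R^d}]{\lambda}$ are finite and depend only on $\lambda$, $d$, $l$, this gives $\seminorm[a]{\lambda_m} \cleq \sum_{l=k_{\min}}^{k} \hMin{}^{d/2-l}$; finally, using $\hMin{} \cleq 1$ (a consequence of the boundedness assumption in \cref{Interpol_points}), every term with $l \leq k$ is at most $\hMin{}^{d/2-k}$, so $\seminorm[a]{\lambda_m} \cleq \hMin{}^{d/2-k}$.

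\emph{Main obstacle.} There is essentially none worth highlighting: each step is a one-line computation. The only two places that need a moment of care are the use of the factor $\tfrac12$ built into the definition of $\hMin{}$ to obtain the Kronecker-$\delta$ property, and invoking $\hMin{} \cleq 1$ at the very end so that the $l=k$ term dominates the sum over $l$.
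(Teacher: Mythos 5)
Your proof is correct and is precisely the routine scaling argument the paper has in mind when it omits the proof "due to its simplicity": the support and Kronecker properties follow from the factor $\tfrac12$ in the definition of $\hMin{}$, and the stability bound follows from item (7) of \cref{Native_space_Props}, the change of variables $y=(x-x_m)/\hMin{}$, and $\hMin{}\cleq 1$. No gaps.
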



Now that the dual functions $\lambda_m \in \CkO{\infty}{\R^d}$ are properly defined, let us introduce a name for the mapping $\mvemph{f} \mapsto \sum_{n=1}^{N} \mvemph{f}_n \lambda_n$ from before.
\begin{definition} \label{Coord_mapping_Lambda}
We define the operators
\begin{equation*}
\fDefB[\Lambda]{\R^N}{V}{\mvemph{f}}{\sum_{n=1}^{N} \mvemph{f}_n \lambda_n}, \quad \quad \quad \fDefB[\Lambda^T]{V}{\R^N}{v}{(\bilinear[a]{v}{\lambda_n})_{n=1}^{N}}.
\end{equation*}
\end{definition}

Recall from \cref{Native_space_Props} that $V$ is \emph{not} necessarily a Hilbert space, so that $\Lambda^T$ cannot be the proper Hilbert space transpose of $\Lambda$. However, as discussed below, the defining equation for transposed operators is still satisfied.

In the subsequent lemma, we use $\supp{\mvemph{f}} := \Set{n \in \set{1,\dots,N}}{\mvemph{f}_n \neq 0}$ to denote the \emph{support} of a vector $\mvemph{f} \in \R^N$. Remember that $\Bubbles{I} = \bigcup_{n \in I} \Bubbles{n} \subseteq \R^d$ denotes a union of bubbles (cf. \cref{Bubbles}). Furthermore, we make use of the local seminorms $\seminorm[a,\Omega]{\cdot}$, $\Omega \subseteq \R^d$, from \cref{Native_space}.

\begin{lemma} \label{Coord_mapping_Lambda_Props}
The operators $\Lambda$ and $\Lambda^T$ are transposed in the following sense:
\begin{equation*}
\forall v \in V: \forall \mvemph{f} \in \R^N: \quad \quad \bilinear[a]{v}{\Lambda\mvemph{f}} = \skalar[2]{\Lambda^T v}{\mvemph{f}}.
\end{equation*}

Both $\Lambda$ and $\Lambda^T$ preserve locality: For all $\mvemph{f} \in \R^N$, $v \in V$ and $I \subseteq \set{1,\dots,N}$, there holds
\begin{equation*}
\supp{\Lambda \mvemph{f}} \subseteq \Bubbles{\supp{\mvemph{f}}}, \quad \quad \quad \seminorm[a]{\Lambda \mvemph{f}} \cleq \hMin{}^{d/2-k} \norm[2]{\mvemph{f}}, \quad \quad \quad \norm[\lp{2}{I}]{\Lambda^T v} \cleq \hMin{}^{d/2-k} \seminorm[a,\Bubbles{I}]{v}.
\end{equation*}

Finally, for all $\mvemph{f} \in \R^N$, there holds $E_N \Lambda \mvemph{f} = \mvemph{f}$. In particular, we have $v-\Lambda E_N v \in V_0$ for all $v \in V$.
\end{lemma}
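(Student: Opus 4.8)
The plan is to verify the six assertions in turn; each one reduces to an elementary manipulation once one has in hand the properties of the dual functions collected in \cref{Dual_functions_Props} --- in particular $\supp{\lambda_n} \subseteq \Bubbles{n}$, $\lambda_n(x_m) = \kronecker{nm}$ and $\seminorm[a]{\lambda_n} \cleq \hMin{}^{d/2-k}$ --- together with the pairwise disjointness of the bubbles $\Bubbles{n}$ noted after \cref{Bubbles}.

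The transposition formula is immediate from the bilinearity of $\bilinear[a]{\cdot}{\cdot}$: for $v \in V$ and $\mvemph{f} \in \R^N$ one has $\bilinear[a]{v}{\Lambda\mvemph{f}} = \sum_{n=1}^N \mvemph{f}_n \bilinear[a]{v}{\lambda_n} = \skalar[2]{\Lambda^T v}{\mvemph{f}}$ directly by the definitions of $\Lambda$ and $\Lambda^T$. The support inclusion is equally direct, $\supp{\Lambda\mvemph{f}} \subseteq \bigcup_{n \in \supp{\mvemph{f}}} \supp{\lambda_n} \subseteq \bigcup_{n \in \supp{\mvemph{f}}} \Bubbles{n} = \Bubbles{\supp{\mvemph{f}}}$. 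For the identity $E_N\Lambda\mvemph{f} = \mvemph{f}$, I would simply evaluate $(\Lambda\mvemph{f})(x_m) = \sum_{n} \mvemph{f}_n \lambda_n(x_m) = \mvemph{f}_m$; since $\Lambda$ maps into $V$, the function $v - \Lambda E_N v$ then lies in $V$ and satisfies $E_N(v - \Lambda E_N v) = E_N v - E_N\Lambda E_N v = \mvemph{0}$, hence $v - \Lambda E_N v \in V_0$.

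The two quantitative bounds are where the disjointness of the bubbles does the actual work. For $\Lambda$, I would expand $\seminorm[a]{\Lambda\mvemph{f}}^2 = \sum_{n,m} \mvemph{f}_n\mvemph{f}_m\bilinear[a]{\lambda_n}{\lambda_m}$; because $\bilinear[a]{\cdot}{\cdot}$ is assembled purely from $L^2(\R^d)$-pairings of derivatives and $\supp{\lambda_n} \cap \supp{\lambda_m} \subseteq \Bubbles{n} \cap \Bubbles{m} = \emptyset$ when $n \neq m$, all off-diagonal terms vanish, leaving $\seminorm[a]{\Lambda\mvemph{f}}^2 = \sum_n \mvemph{f}_n^2\seminorm[a]{\lambda_n}^2 \cleq \hMin{}^{d-2k}\norm[2]{\mvemph{f}}^2$. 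For $\Lambda^T$, the point is that $\bilinear[a]{v}{\lambda_n}$ sees $v$ only on $\Bubbles{n}$: since $\DN{\alpha}{\lambda_n}$ is supported in $\Bubbles{n}$ and $v \in \HkLoc{k}{\R^d}$ by \cref{Native_space_Props}, the restriction $\restrictN{v}{\Bubbles{n}}$ lies in $\Hk{k}{\Bubbles{n}}$ and the localized Cauchy--Schwarz inequality gives $\abs{\bilinear[a]{v}{\lambda_n}} \leq \seminorm[a,\Bubbles{n}]{v}\seminorm[a,\Bubbles{n}]{\lambda_n} = \seminorm[a,\Bubbles{n}]{v}\seminorm[a]{\lambda_n} \cleq \hMin{}^{d/2-k}\seminorm[a,\Bubbles{n}]{v}$. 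Summing the squares over $n \in I$ and using disjointness of the bubbles once more to recombine $\sum_{n \in I}\seminorm[a,\Bubbles{n}]{v}^2 = \seminorm[a,\Bubbles{I}]{v}^2$ yields $\norm[\lp{2}{I}]{\Lambda^T v} \cleq \hMin{}^{d/2-k}\seminorm[a,\Bubbles{I}]{v}$ after taking a square root.

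There is no genuine obstacle in this lemma; the only step that warrants a modicum of care is the justification that $\bilinear[a]{\cdot}{\cdot}$ localizes to $\supp{\lambda_n}$ --- i.e.\ that the relevant restrictions of $v$ are genuine $\Hk{k}$-functions on $\Bubbles{n}$, so that the local seminorms $\seminorm[a,\Bubbles{n}]{\cdot}$ and the localized Cauchy--Schwarz inequality are meaningful --- and this is exactly what the inclusion $V \subseteq \HkLoc{k}{\R^d}$ from \cref{Native_space_Props} provides.
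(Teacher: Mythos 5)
Your proof is correct and follows essentially the same route as the paper's: the transposition formula and $E_N\Lambda\mvemph{f}=\mvemph{f}$ by direct computation with the Kronecker-$\delta$ property, the $\Lambda$-bound via the orthogonality $\bilinear[a]{\lambda_n}{\lambda_m}=0$ for $n\neq m$ from disjointness of the bubbles, and the $\Lambda^T$-bound via the localized Cauchy--Schwarz inequality. Your extra remark justifying the localization through $V\subseteq\HkLoc{k}{\R^d}$ is a valid point of care but does not change the argument.
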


\begin{proof}
Let $v \in V$ and $\mvemph{f} \in \R^N$. The transposition formula is straightforward: $\bilinear[a]{v}{\Lambda\mvemph{f}} = \bilinear[a]{v}{\sum_{n=1}^{N} \mvemph{f}_n \lambda_n} = \sum_{n=1}^{N} \bilinear[a]{v}{\lambda_n} \mvemph{f}_n = \skalar[2]{\Lambda^T v}{\mvemph{f}}$. Next, in order to determine the support of $\Lambda\mvemph{f}$, we remember from \cref{Dual_functions_Props} that $\supp{\lambda_n} \subseteq \Bubbles{n}$. Abbreviating $I := \supp{\mvemph{f}}$, we find that $\supp{\Lambda\mvemph{f}} = \supp{\sum_{n \in I} \mvemph{f}_n \lambda_n} \subseteq \bigcup_{n \in I} \supp{\lambda_n} \subseteq \Bubbles{I}$.

In order to see the upper bound for $\seminorm[a]{\Lambda\mvemph{f}}$, we argue that the disjointness of the bubbles $\Bubbles{n}$ implies the orthogonality $\bilinear[a]{\lambda_n}{\lambda_m} = \sum_{l,\alpha} (\sigma_l l!/\alpha!) \skalar[\Lp{2}{\Bubbles{n} \cap \Bubbles{m}}]{\DN{\alpha}{\lambda_n}}{\DN{\alpha}{\lambda_m}} = 0$ for all $m \neq n$. Therefore, with \cref{Dual_functions_Props}, we obtain
\begin{equation*}
\seminorm[a]{\Lambda\mvemph{f}}^2 = \bilinear[a]{\Lambda \mvemph{f}}{\Lambda \mvemph{f}} = \sum_{n,m=1}^{N} \mvemph{f}_n \mvemph{f}_m \bilinear[a]{\lambda_n}{\lambda_m} = \sum_{n=1}^{N} \mvemph{f}_n^2 \seminorm[a]{\lambda_n}^2 \cleq \hMin{}^{d-2k} \norm[2]{\mvemph{f}}^2.
\end{equation*}

Next, consider an arbitrary index set $I \subseteq \set{1,\dots,N}$. To show the upper bound for $\norm[\lp{2}{I}]{\Lambda^T v}$, we use a localized version of the Cauchy-Schwarz inequality from \cref{Native_space_Props} and exploit the disjoint bubbles once again:
\begin{equation*}
\norm[\lp{2}{I}]{\Lambda^T v}^2 = \sum_{n \in I} \bilinear[a]{v}{\lambda_n}^2 \leq \sum_{n \in I} \seminorm[a,\Bubbles{n}]{v}^2 \seminorm[a]{\lambda_n}^2 \cleq \hMin{}^{d-2k} \sum_{n \in I} \seminorm[a,\Bubbles{n}]{v}^2 = \hMin{}^{d-2k} \seminorm[a,\Bubbles{I}]{v}^2.
\end{equation*}

Finally, for all $\mvemph{f} \in \R^N$, the Kronecker-$\delta$-property from \cref{Dual_functions_Props} gives us the identity $E_N \Lambda \mvemph{f} = (\sum_m \mvemph{f}_m \lambda_m(x_n))_{n=1}^{N} = (\mvemph{f}_n)_{n=1}^{N} = \mvemph{f}$. In particular, for all $v \in V$, this implies $E_N(v-\Lambda E_N v) = E_N v - E_N \Lambda E_N v = E_N v - E_N v = \mvemph{0}$, i.e., $v-\Lambda E_N v \in V_0$.
\end{proof}

In \cref{LSE_cd_to_u} we demonstrated how to derive the solution $u \in V$ of \cref{Interpol_problem_2} from the solution $(\mvemph{c},\mvemph{d}) \in \R^N \times \R^{N_{\min}}$ of the LSE in \cref{SSec:LSE}. In the next lemma, we go in the opposite direction, i.e., we construct the coefficient vectors $\mvemph{c}$ and $\mvemph{d}$ from a given solution $u$. Once we have this result, the representation formula for the inverse interpolation matrix is merely a byproduct. 
\begin{lemma} \label{LSE_u_to_cd}
Let $f \in V$ and denote by $u \in V$ the unique solution of \cref{Interpol_problem_2}. Set $\mvemph{f} := E_N f \in \R^N$. Then, the coefficient vectors
\begin{equation*}
\mvemph{c} := \Lambda^T u \in \mvemph{C}, \quad \quad \quad \mvemph{d} := \Pi^{-1}(\identity-\Phi\Lambda^T)u \in \R^{N_{\min}}
\end{equation*}
solve the LSE in \cref{LSE_cd_to_u}.
\end{lemma}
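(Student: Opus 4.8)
The plan is to avoid checking the two block rows of the LSE by hand, and instead to run the argument backwards through the already-established solvability theory. By \cref{LSE_invertible} the interpolation matrix is invertible, so the LSE in the statement has a \emph{unique} solution, which I temporarily denote $(\mvemph{c}',\mvemph{d}') \in \R^N \times \R^{N_{\min}}$. According to \cref{LSE_cd_to_u}, the second row forces $\mvemph{c}' \in \mvemph{C}$ (so in particular $\Phi\mvemph{c}' \in V$ by \cref{Coord_mappings_Phi_Pi_Props}), and the function $\Phi\mvemph{c}' + \Pi\mvemph{d}'$ coincides with the unique solution $u$ of \cref{Interpol_problem_2}. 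It therefore suffices to identify $\mvemph{c}'$ and $\mvemph{d}'$ with the two vectors in the statement; as a by-product this shows that $(\identity - \Phi\Lambda^T)u$ actually lies in $P$, so that applying $\Pi^{-1}$ to it is legitimate.

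The key step is to apply the operator $\Lambda^T$ from \cref{Coord_mapping_Lambda} to the identity $u = \Phi\mvemph{c}' + \Pi\mvemph{d}'$ (all three functions lie in $V$, using $\mvemph{c}' \in \mvemph{C}$ for the middle term and $P \subseteq V$ for the last). Two short computations are needed. First, since $\Pi\mvemph{d}' \in P$, \cref{Native_space_Props} gives $\bilinear[a]{\Pi\mvemph{d}'}{\lambda_n} = 0$ for every $n$, hence $\Lambda^T\Pi\mvemph{d}' = \mvemph{0}$. Second, using the transposition formula of \cref{Coord_mappings_Phi_Pi_Props} with test function $\lambda_n \in V$ together with the Kronecker property $E_N\lambda_n = (\kronecker{mn})_{m=1}^{N}$ from \cref{Dual_functions_Props}, one gets $(\Lambda^T\Phi\mvemph{c}')_n = \bilinear[a]{\Phi\mvemph{c}'}{\lambda_n} = \skalar[2]{\mvemph{c}'}{E_N\lambda_n} = \mvemph{c}'_n$, i.e. $\Lambda^T\Phi\mvemph{c}' = \mvemph{c}'$. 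Combining the two, $\Lambda^T u = \mvemph{c}' + \mvemph{0} = \mvemph{c}'$, which is exactly the first claimed identity $\mvemph{c}' = \Lambda^T u = \mvemph{c}$.

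With $\mvemph{c}' = \Lambda^T u$ established, the identity $u = \Phi\mvemph{c}' + \Pi\mvemph{d}'$ rearranges to $\Pi\mvemph{d}' = u - \Phi\Lambda^T u = (\identity - \Phi\Lambda^T)u$; since the left-hand side lies in $P$ (the range of the bijection $\Pi$, cf. \cref{Coord_mappings_Phi_Pi_Props}), so does the right-hand side, and applying $\Pi^{-1}$ yields $\mvemph{d}' = \Pi^{-1}(\identity - \Phi\Lambda^T)u = \mvemph{d}$. Thus $(\mvemph{c},\mvemph{d}) = (\mvemph{c}',\mvemph{d}')$ solves the LSE, as asserted. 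I do not anticipate a real obstacle here; the only subtlety worth flagging is that $\Phi$ maps only the subspace $\mvemph{C}$ (not all of $\R^N$) into $V$, so one must not manipulate $\Phi\Lambda^T u$ before knowing $\Lambda^T u \in \mvemph{C}$ — which is precisely why the argument is routed through the a priori LSE solution $\mvemph{c}'$, whose membership in $\mvemph{C}$ is furnished by \cref{LSE_invertible}, rather than by verifying the block rows from scratch.
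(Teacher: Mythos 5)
Your proof is correct, but it takes a genuinely different route from the paper's. The paper argues \emph{forward} from the variational characterization of $u$: it invokes the orthogonality $\bilinear[a]{u}{v}=0$ for $v \in V_0$ twice — once with $v = q - \Lambda E_N q$, $q \in P$, to show $\Lambda^T u \in \mvemph{C}$ directly, and once with $v = p - \Lambda E_N p$ for $p := u - \Phi\Lambda^T u$ to conclude $\seminorm[a]{p} = 0$, hence $p \in P$ by \cref{Native_space_Props} — and then verifies the two block rows of the LSE by hand. You instead argue \emph{backward}: you take the (unique, by \cref{LSE_invertible}) LSE solution $(\mvemph{c}',\mvemph{d}')$, use \cref{LSE_cd_to_u} to know it reproduces $u$, and identify $\mvemph{c}' = \Lambda^T u$ by hitting the decomposition $u = \Phi\mvemph{c}' + \Pi\mvemph{d}'$ with $\Lambda^T$, exploiting the biorthogonality $E_N\lambda_n = (\kronecker{nm})_m$ together with the transposition formula; $\mvemph{d}'$ then drops out by rearranging. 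Your route is shorter and sidesteps the two orthogonality computations, at the price of leaning on the invertibility of the interpolation matrix and on \cref{LSE_cd_to_u}, whereas the paper's direct verification is independent of both and in particular furnishes existence of an LSE solution without invoking \cref{LSE_invertible}(3). You are also right to flag, and correctly handle, the one genuine subtlety — that $\Phi\Lambda^T u$ is only meaningful once $\Lambda^T u \in \mvemph{C}$ is known, which your detour through $\mvemph{c}'$ supplies (the paper gets it from the first orthogonality computation instead). No circularity arises, since both lemmas you cite precede this one and do not depend on it.
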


\begin{proof}
First, let us show that indeed $\mvemph{c} \in \mvemph{C}$: For every $q \in P$, there holds $q-\Lambda E_N q \in V_0$ (cf. \cref{Coord_mapping_Lambda_Props}) and thus
\begin{equation*}
\skalar[2]{\mvemph{c}}{E_N q} = \skalar[2]{\Lambda^T u}{E_N q} \stackrel{\cref{Coord_mapping_Lambda_Props}}{=} \bilinear[a]{u}{\Lambda E_N q} \stackrel{\cref{Interpol_problem_2}}{=} \bilinear[a]{u}{q} \stackrel{\cref{Native_space_Props}}{=} 0.
\end{equation*}

Now \cref{Coord_mappings_Phi_Pi_Props} tells us that $\Phi\mvemph{c} \in V$, so that $p := u-\Phi\mvemph{c} \in V$. Using $p-\Lambda E_N p \in V_0$ (cf. \cref{Coord_mapping_Lambda_Props}, again) and the orthogonality of $u$, we get
\begin{equation*}
\seminorm[a]{p}^2 = \bilinear[a]{p}{p} = \bilinear[a]{u}{p} - \bilinear[a]{\Phi\Lambda^T u}{p} \stackrel{\cref{Coord_mappings_Phi_Pi_Props}}{=} \bilinear[a]{u}{p} - \skalar[2]{\Lambda^T u}{E_N p} \stackrel{\cref{Coord_mapping_Lambda_Props}}{=} \bilinear[a]{u}{p-\Lambda E_N p} \stackrel{\cref{Interpol_problem_2}}{=} 0.
\end{equation*}
Due to \cref{Native_space_Props}, this implies $p \in P$, so that $\mvemph{d} := \Pi^{-1} p = \Pi^{-1}(\identity-\Phi\Lambda^T)u \in \R^{N_{\min}}$ is well-defined.

As for the first row of the LSE, we compute
\begin{equation*}
\mvemph{A}\mvemph{c} + \mvemph{B}^T\mvemph{d} \stackrel{\cref{LSE_invertible}}{=} E_N(\Phi\mvemph{c}+\Pi\mvemph{d}) \stackrel{\text{Def.}\mvemph{d}}{=} E_N(\Phi\mvemph{c}+p) \stackrel{\text{Def.}p}{=} E_N u \stackrel{\cref{Interpol_problem_2}}{=} E_N f = \mvemph{f}.
\end{equation*}
Finally, using \cref{LSE_invertible} again, we have $\mvemph{c} \in \mvemph{C} = \ker{\mvemph{B}}$. In other words, $\mvemph{B}\mvemph{c} = \mvemph{0}$, which is precisely the second row of the LSE.
\end{proof}

We close this section with the promised representation formula. The formula establishes a relationship between the interpolation matrix $(\begin{smallmatrix} \mvemph{A} & \mvemph{B}^T \\ \mvemph{B} & \mvemph{0} \end{smallmatrix})$ from \cref{LSE_Matrices}, the coordinate mappings $\fDef{\Phi}{\R^N}{\Ck{0}{\R^d}}$ and $\fDef{\Pi}{\R^{N_{\min}}}{P}$ from \cref{Coord_mappings_Phi_Pi}, the solution operator $\fDef{S_N}{V}{V}$ from \cref{Solution_op} and the operators $\fDef{\Lambda}{\R^N}{V}$ and $\fDef{\Lambda^T}{V}{\R^N}$ from \cref{Coord_mapping_Lambda}.
\begin{corollary} \label{Rep_formula}
For all $\mvemph{f} \in \R^N$, there holds the identity
\begin{equation*}
\(\begin{matrix} \mvemph{A} & \mvemph{B}^T \\ \mvemph{B} & \mvemph{0} \end{matrix}\) \(\begin{matrix} \Lambda^T S_N \Lambda \mvemph{f} \\ \Pi^{-1}(\identity-\Phi\Lambda^T) S_N \Lambda \mvemph{f} \end{matrix}\) = \(\begin{matrix} \mvemph{f} \\ \mvemph{0} \end{matrix}\).
\end{equation*}
In particular, the main block $\mvemph{S_{11}} \in \R^{N \times N}$ from the representation $(\begin{smallmatrix} \mvemph{A} & \mvemph{B}^T \\ \mvemph{B} & \mvemph{0} \end{smallmatrix})^{-1} = (\begin{smallmatrix} \mvemph{S_{11}} & \mvemph{S_{12}} \\ \mvemph{S_{21}} & \mvemph{S_{22}} \end{smallmatrix})$ satisfies the relation
\begin{equation*}
\mvemph{S_{11}} \mvemph{f} = \Lambda^T S_N \Lambda \mvemph{f} \in \mvemph{C}.
\end{equation*}
\end{corollary}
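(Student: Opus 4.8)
The plan is to assemble the corollary directly from \cref{LSE_cd_to_u}, \cref{LSE_u_to_cd} and the invertibility established in \cref{LSE_invertible}; no new estimates are required. Fix $\mvemph{f} \in \R^N$. The first step is to lift $\mvemph{f}$ into the native space by setting $f := \Lambda\mvemph{f} \in V$, which by the last assertion of \cref{Coord_mapping_Lambda_Props} satisfies $E_N f = E_N\Lambda\mvemph{f} = \mvemph{f}$. Let $u := S_N f = S_N\Lambda\mvemph{f} \in V$ be the solution of \cref{Interpol_problem_2} associated with $f$ via \cref{Solution_op} (existence and uniqueness by \cref{Interpol_problem_2_Solvable}).

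Next I would apply \cref{LSE_u_to_cd} to this pair $(f,u)$. It yields that the coefficient vectors $\mvemph{c} := \Lambda^T u = \Lambda^T S_N\Lambda\mvemph{f} \in \mvemph{C}$ and $\mvemph{d} := \Pi^{-1}(\identity-\Phi\Lambda^T)u = \Pi^{-1}(\identity-\Phi\Lambda^T)S_N\Lambda\mvemph{f} \in \R^{N_{\min}}$ solve the LSE of \cref{LSE_cd_to_u}, whose right-hand side is $(E_N f,\mvemph{0})^T = (\mvemph{f},\mvemph{0})^T$. This is precisely the displayed identity of the corollary; everything needed (in particular $\Phi\mvemph{c} \in V$ and $(\identity-\Phi\Lambda^T)u \in P$, so that $\Pi^{-1}$ applies) is already part of \cref{LSE_u_to_cd}.

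Finally, for the ``in particular'' statement I would invoke the invertibility of the interpolation matrix from \cref{LSE_invertible}(3): multiplying the displayed identity from the left by the inverse $(\begin{smallmatrix} \mvemph{S_{11}} & \mvemph{S_{12}} \\ \mvemph{S_{21}} & \mvemph{S_{22}} \end{smallmatrix})$ and reading off the first block row gives $\mvemph{S_{11}}\mvemph{f} = \Lambda^T S_N\Lambda\mvemph{f}$, while the membership $\mvemph{S_{11}}\mvemph{f} \in \mvemph{C}$ is inherited from $\mvemph{c} \in \mvemph{C}$ in \cref{LSE_u_to_cd}. Since the whole argument is a concatenation of results already proved, there is no genuine obstacle; the only point demanding a little care is to feed the solution operator a datum that actually interpolates $\mvemph{f}$, which is exactly what the relation $E_N\Lambda = \identity$ from \cref{Coord_mapping_Lambda_Props} supplies.
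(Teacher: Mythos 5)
Your proposal is correct and follows essentially the same route as the paper: lift $\mvemph{f}$ via $\Lambda$, apply the solution operator, invoke \cref{LSE_u_to_cd} for the coefficient vectors, and use $E_N\Lambda\mvemph{f}=\mvemph{f}$ from \cref{Coord_mapping_Lambda_Props} to identify the right-hand side. The only (harmless) addition is that you spell out the multiplication by the inverse matrix for the ``in particular'' part, which the paper leaves implicit.
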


\begin{proof}
Let $\mvemph{f} \in \R^N$ and set $f := \Lambda\mvemph{f} \in V$. According to \cref{Solution_op}, the function $u := S_N f \in V$ is the unique solution of \cref{Interpol_problem_2}. Now \cref{LSE_u_to_cd} tells us that the coefficient vectors $\mvemph{c} := \Lambda^T u = \Lambda^T S_N \Lambda \mvemph{f} \in \mvemph{C}$ and $\mvemph{d} := \Pi^{-1}(\identity-\Phi\Lambda^T)u = \Pi^{-1}(\identity-\Phi\Lambda^T)S_N \Lambda\mvemph{f} \in \R^{N_{\min}}$ solve the LSE from \cref{LSE_cd_to_u}, where the right-hand side is given by $\mvemph{\tilde{f}} := E_N f \in \R^N$. However, we know from \cref{Coord_mapping_Lambda_Props} that $\mvemph{\tilde{f}} = E_N f = E_N \Lambda \mvemph{f} = \mvemph{f}$. This proves the asserted identity.
\end{proof}

\subsection{Reduction from matrix-level to function-level} \label{SSec:Mat_lvl_to_fct_lvl}

In our main result, \cref{Main_result}, we claimed that the main diagonal block $\mvemph{S_{11}}$ of the inverse interpolation matrix can be approximated well by $\mathcal{H}$-matrices. The goal of the subsequent lemma is to reduce this ``matrix-level'' question to an analogous question on the ``function-level''. The majority of the work has already been done in achieving \cref{Rep_formula}, where the action of the matrix $\mvemph{S_{11}}$ on any given vector $\mvemph{f} \in \R^N$ was characterized by the abstract solution operator $\fDef{S_N}{V}{V}$ from \cref{Solution_op}. Furthermore, thanks to the asserted inequality (cf. \cref{Block_partition})
\begin{equation*}
\norm[2]{\mvemph{M}} \leq C \ln(N) \max_{(I,J) \in \BPart} \norm[2]{\restrictN{\mvemph{M}}{I \times J}} \quad \quad \forall \mvemph{M} \in \R^{N \times N},
\end{equation*}
it suffices to derive an error bound for each matrix block $\restrictN{\mvemph{S_{11}}}{I \times J}$, where $(I,J)$ is any given cluster tuple from the sparse hierarchical block partition $\BPart$.


\begin{lemma} \label{Mat_lvl_to_fct_lvl}
Let $\mvemph{S_{11}} \in \R^{N \times N}$ be the main diagonal block of the inverse interpolation matrix from \cref{LSE_Matrices}. Let $I,J \subseteq \set{1,\dots,N}$ and $W \subseteq V$ be a finite-dimensional subspace. Then, there exist an integer $r \leq \dimN{W}$ and matrices $\mvemph{X} \in \R^{I \times r}$ and $\mvemph{Y} \in \R^{J \times r}$, such that there holds the following error bound:
\begin{equation*}
\norm[2]{\restrictN{\mvemph{S_{11}}}{I \times J} - \mvemph{X} \mvemph{Y}^T} \cleq \hMin{}^{d-2k} \* \sup_{\substack{f \in V: \\ \supp{f} \subseteq \Bubbles{J}}} \inf_{w \in W} \frac{\seminorm[a,\Bubbles{I}]{S_N f - w}}{\seminorm[a]{f}}.
\end{equation*}
\end{lemma}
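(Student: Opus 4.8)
The plan is to use the representation formula of \cref{Rep_formula}, which expresses the action of $\mvemph{S_{11}}$ through the abstract solution operator $S_N$, together with the locality and stability properties of $\Lambda$ and $\Lambda^T$ from \cref{Coord_mapping_Lambda_Props}, and to realise the low-rank approximant by an orthogonal projection carried out directly on $\R^I$.

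First I would set up a factorisation. For $\mvemph{g} \in \R^J$, write $\hat{\mvemph{g}} \in \R^N$ for its extension by zero, so that $\restrictN{\mvemph{S_{11}}}{I \times J}\mvemph{g} = \restrictN{(\mvemph{S_{11}}\hat{\mvemph{g}})}{I}$. By \cref{Rep_formula}, $\mvemph{S_{11}}\hat{\mvemph{g}} = \Lambda^T S_N \Lambda\hat{\mvemph{g}}$, hence with $f_{\mvemph{g}} := \Lambda\hat{\mvemph{g}} \in V$ we get $\restrictN{\mvemph{S_{11}}}{I \times J}\mvemph{g} = \restrictN{(\Lambda^T S_N f_{\mvemph{g}})}{I}$, and \cref{Coord_mapping_Lambda_Props} gives $\supp{f_{\mvemph{g}}} \subseteq \Bubbles{J}$ as well as $\seminorm[a]{f_{\mvemph{g}}} \cleq \hMin{}^{d/2-k}\norm[2]{\mvemph{g}}$. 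Next I would introduce the finite-dimensional subspace $U := \Set{\restrictN{(\Lambda^T w)}{I}}{w \in W} \subseteq \R^I$, set $r := \dimN{U} \leq \dimN{W}$, pick $\mvemph{X} \in \R^{I \times r}$ whose columns form a basis of $U$, and let $\mvemph{P}_U \in \R^{I \times I}$ be the $\lp{2}{I}$-orthogonal projection onto $U$. Since $\mvemph{g} \mapsto \mvemph{P}_U \restrictN{\mvemph{S_{11}}}{I \times J}\mvemph{g}$ is linear with range contained in $U = \ran{\mvemph{X}}$, there exists $\mvemph{Y} \in \R^{J \times r}$ with $\mvemph{X}\mvemph{Y}^T = \mvemph{P}_U \restrictN{\mvemph{S_{11}}}{I \times J}$; this fixes the candidate low-rank matrix, and by construction $r \leq \dimN{W}$.

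It remains to estimate the error. By the choice of $\mvemph{P}_U$, the vector $\restrictN{\mvemph{S_{11}}}{I \times J}\mvemph{g} - \mvemph{X}\mvemph{Y}^T\mvemph{g}$ is the $\lp{2}{I}$-orthogonal residual of $\restrictN{(\Lambda^T S_N f_{\mvemph{g}})}{I}$ with respect to $U$; since $\restrictN{(\Lambda^T w)}{I} \in U$ for every $w \in W$, linearity of $\Lambda^T$ gives
\begin{equation*}
\norm[\lp{2}{I}]{\restrictN{\mvemph{S_{11}}}{I \times J}\mvemph{g} - \mvemph{X}\mvemph{Y}^T\mvemph{g}} \leq \inf_{w \in W} \norm[\lp{2}{I}]{\restrictN{\big(\Lambda^T (S_N f_{\mvemph{g}} - w)\big)}{I}} \cleq \hMin{}^{d/2-k} \inf_{w \in W} \seminorm[a,\Bubbles{I}]{S_N f_{\mvemph{g}} - w},
\end{equation*}
the last step being the third estimate of \cref{Coord_mapping_Lambda_Props} applied to $v := S_N f_{\mvemph{g}} - w \in V$. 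Now $f_{\mvemph{g}}$ has support in $\Bubbles{J}$, so by the definition of the supremum--infimum quantity $\kappa$ on the right-hand side of the assertion one has $\inf_{w \in W}\seminorm[a,\Bubbles{I}]{S_N f_{\mvemph{g}} - w} \leq \kappa\,\seminorm[a]{f_{\mvemph{g}}}$ (if $\seminorm[a]{f_{\mvemph g}}=0$, i.e. $f_{\mvemph g}\in P$, then $f_{\mvemph g}$ is a compactly supported polynomial, hence $f_{\mvemph g}=0$ and the bound is trivial). Inserting $\seminorm[a]{f_{\mvemph{g}}} \cleq \hMin{}^{d/2-k}\norm[2]{\mvemph{g}}$ yields $\norm[\lp{2}{I}]{\restrictN{\mvemph{S_{11}}}{I \times J}\mvemph{g} - \mvemph{X}\mvemph{Y}^T\mvemph{g}} \cleq \hMin{}^{d-2k}\norm[2]{\mvemph{g}}\,\kappa$, and taking the supremum over $\norm[2]{\mvemph{g}} = 1$ gives the claimed spectral-norm bound.

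The single point requiring care — and the reason for projecting on $\R^I$ rather than picking a best approximant in $W$ — is that a best $\seminorm[a,\Bubbles{I}]{\cdot}$-approximant of $S_N f_{\mvemph{g}}$ in $W$ need not depend linearly on $\mvemph{g}$, since $\seminorm[a,\Bubbles{I}]{\cdot}$ may be degenerate on $W$. The discrete orthogonal projection $\mvemph{P}_U$ is automatically linear and $\lp{2}{I}$-optimal, and the one-sided bound $\norm[\lp{2}{I}]{\restrictN{(\Lambda^T v)}{I}} \cleq \hMin{}^{d/2-k}\seminorm[a,\Bubbles{I}]{v}$ transports this optimality, up to the explicit factor $\hMin{}^{d/2-k}$, to the function-level best-approximation error; everything else is bookkeeping with the mapping properties already established.
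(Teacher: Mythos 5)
Your proof is correct and follows exactly the route the paper sketches (the paper omits the details, citing only \cref{Rep_formula} and \cref{Coord_mapping_Lambda_Props} and a reference to an analogous $L^2$-based lemma): factor through $\Lambda^T S_N \Lambda$, use the support/stability properties of $\Lambda$ and $\Lambda^T$, and realise the rank-$r$ approximant via the $\lp{2}{I}$-orthogonal projection onto $\restrictN{(\Lambda^T W)}{I}$. The attention to the linearity of the approximant and to the degenerate case $\seminorm[a]{f_{\mvemph{g}}}=0$ is exactly the bookkeeping the paper leaves to the reader.
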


\begin{proof}
Follows from \cref{Coord_mapping_Lambda_Props} and \cref{Rep_formula}. We omit a detailed proof and refer the reader to \cite[Lemma 3.13]{Angleitner_H_matrices_FEM}, where a similar bound was derived in terms of the $L^2$-norm.
\end{proof}

\cref{Mat_lvl_to_fct_lvl} can be interpreted as follows: Given boxes $B,D \subseteq \R^d$ (cf. \cref{Box}), a free parameter $L \in \N$ and a function $f \in V$ with $\supp{f} \subseteq D$, can we construct a subspace $V_{B,D,L} \subseteq V$ such that the dimension $\dim{V_{B,D,L}}$ and the best-approximation error $\inf_{w \in V_{B,D,L}} \seminorm[a,B]{S_N f - w}$ are both ``small'' at the same time? More precisely, is it possible to build $V_{B,D,L}$ in a way such that $\dim{V_{B,D,L}} \cleq L^{c_1}$ and $\inf_{w \in V_{B,D,L}} \seminorm[a,B]{S_N f - w} \cleq 2^{-c_2 L} \seminorm[a]{f}$ for some constants $c_1,c_2>0$?

Over the course of the subsequent sections, we will show that the answer to this question is affirmative (up to a factor $L^k/\hMin{}^k$ in the error bound), if the domains $B,D$ satisfy the \emph{admissibility condition} $\diam{B} \leq \CAdm \dist{B}{D}$ from \cref{Block_partition}.

Let us quickly give an overview of the construction of the space $V_{B,D,L}$:
\begin{enumerate}
\item Due to the admissibility condition, we can safely ``inflate'' the box $B$ in $L$ increments of identical size, before it touches the box $D$. This generates a family of $L$ concentric boxes between $B$ and $D$, i.e., $B \subseteq B_1 \subseteq \dots \subseteq B_L \subseteq \R^d\backslash D$.

\item Starting at the outermost layer $B_L$, we set $u_L := S_N f \in V$. Exploiting the facts $\supp{f} \subseteq D$ and $B_L \cap D = \emptyset$, one can then prove that $u_L$ lies in a certain subspace $\VHarm{B_L} \subseteq V$ associated with the box $B_L$. The properties of this subspace allow us to construct a function $u_{L-1} \in V$ with $\Landau{L^d}$ degrees of freedom that is a good approximation to $u_L$ on the box $B_{L-1}$. More precisely, we can show that
\begin{equation*}
\sum_{l=0}^{k} (\delta/\CSSCO)^l \seminorm[\Hk{l}{B_{L-1}}]{u_L-u_{L-1}} \leq \frac{1}{2} \sum_{l=0}^{k} (\delta/\CSSCO)^l \seminorm[\Hk{l}{B_L}]{u_L},
\end{equation*}
where $\delta>0$ is a parameter proportional to $L^{-1}$ and where $\CSSCO>0$ is a certain constant.

\item Our construction of the approximant $u_{L-1}$ will guarantee that $u_{L-1} \in \VHarm{B_{L-1}}$, and, since these subspaces are nested, we find that the error $u_L - u_{L-1} \in \VHarm{B_{L-1}}$ as well. Again, the properties of this subspace allow us to construct a function $u_{L-2} \in \VHarm{B_{L-2}}$ with $\Landau{L^d}$ degrees of freedom that approximates $u_L-u_{L-1}$ well on the box $B_{L-2}$. Proceeding inwards from the largest box $B_L$ to the smallest box $B_1$, this procedure generates a finite sequence of functions $u_{L-1},u_{L-2},\dots,u_1,u_0 \in V$ such that
\begin{equation*}
\sum_{l=0}^{k} (\delta/\CSSCO)^l \seminorm[\Hk{l}{B}]{S_N f-(u_{L-1}+u_{L-2}+\dots+u_1+u_0)} \leq 2^{-L} \sum_{l=0}^{k} (\delta/\CSSCO)^l \seminorm[\Hk{l}{B_L}]{S_N f}.
\end{equation*}
From here, it is then not difficult to get an error estimate in the native space seminorm $\seminorm[a,B]{\cdot}$.

\end{enumerate}

\subsection{The cut-off operator} \label{SSec:Cut_off_op}

Let us now define precisely what we mean by an \emph{inflated box}:
\begin{definition} \label{Box_infl}
Let $B = \bigtimes_{i=1}^{d} (a_i,b_i)$ with $a_i \leq b_i$ be a box as in \cref{Box}. For every $\delta \geq 0$, we introduce the \emph{inflated box}
\begin{equation*}
\inflateN{B}{\delta} := \bigtimes_{i=1}^{d} (-\delta+a_i,b_i+\delta) \subseteq \R^d.
\end{equation*}
\end{definition}

Note that $\inflateN{B}{\delta}$ is again a box. In particular, we can iterate $\inflate{\inflateN{B}{\delta}}{\delta} = \inflateN{B}{2\delta}$, $\inflate{\inflate{\inflateN{B}{\delta}}{\delta}}{\delta} = \inflateN{B}{3\delta}$, et cetera.

In order for our construction of the space $V_{B,D,L}$ to work, we need a way to ``restrict'' the support of a given function $v \in V$ to a box $B \subseteq \R^d$, without destroying its global smoothness. This can be achieved by multiplying $v$ with a smooth \emph{cut-off function}:
\begin{lemma} \label{Cut_off_fct}
Let $B \subseteq \R^d$ be a box and $\delta>0$. Then, there exists a smooth \emph{cut-off function} $\CutoffFc{B}{\delta}$ with the following properties:
\begin{equation*}
\CutoffFc{B}{\delta} \in \CkO{\infty}{\R^d}, \quad \quad \supp{\CutoffFc{B}{\delta}} \subseteq \inflateN{B}{\delta}, \quad \quad \restrictN{\CutoffFc{B}{\delta}}{B} \equiv 1, \quad \quad 0 \leq \CutoffFc{B}{\delta} \leq 1, \quad \quad \forall l \in \N_0: \seminorm[\Wkp{l}{\infty}{\R^d}]{\CutoffFc{B}{\delta}} \cleq \delta^{-l}.
\end{equation*}
\end{lemma}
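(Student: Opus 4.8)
The plan is to produce $\CutoffFc{B}{\delta}$ by mollifying the indicator function of a slightly smaller inflated box. Fix, once and for all, a standard mollifier $\rho \in \CkO{\infty}{\R^d}$ with $\supp{\rho} \subseteq \Ball{0}{1}$, $\rho \geq 0$ and $\I{\R^d}{\rho(x)}{x} = 1$; it is convenient to take $\rho$ to be a tensor product $\rho(x) = \prod_{i=1}^{d}\eta(x_i)$ of a one-dimensional bump $\eta \in \CkO{\infty}{\R}$ with $\supp{\eta}\subseteq(-1,1)$, which makes the box bookkeeping below purely coordinate-wise. For the given $\delta>0$, set $\epsilon := \delta/4$ and $\rho_{\epsilon}(x) := \epsilon^{-d}\rho(x/\epsilon)$, so that $\supp{\rho_{\epsilon}} \subseteq \Ball{0}{\epsilon}$ and $\I{\R^d}{\rho_{\epsilon}(x)}{x}=1$, and then define
\begin{equation*}
\CutoffFc{B}{\delta} := \charFunc{\inflateN{B}{\delta/2}} * \rho_{\epsilon}.
\end{equation*}

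First I would verify smoothness, support and normalization. As the convolution of a compactly supported $\Lp{\infty}{\R^d}$-function with a $\CkO{\infty}{\R^d}$-function, $\CutoffFc{B}{\delta}$ is smooth with compact support, hence $\CutoffFc{B}{\delta} \in \CkO{\infty}{\R^d}$, and $\DN{\alpha}{\CutoffFc{B}{\delta}} = \charFunc{\inflateN{B}{\delta/2}} * \DN{\alpha}{\rho_{\epsilon}}$ for every $\alpha \in \N_0^d$. Writing $B = \bigtimes_{i=1}^{d}(a_i,b_i)$ as in \cref{Box}, the function $\CutoffFc{B}{\delta}$ vanishes outside $\inflateN{B}{\delta/2} + \Ball{0}{\epsilon}$, i.e.\ outside the closed box $\bigtimes_{i=1}^{d}[a_i-\delta/2-\epsilon,\,b_i+\delta/2+\epsilon]$, which is contained in the open box $\inflateN{B}{\delta}$ because $\epsilon=\delta/4<\delta/2$; hence $\supp{\CutoffFc{B}{\delta}} \subseteq \inflateN{B}{\delta}$. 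Conversely, for $x \in B$ and $\norm[2]{y}<\epsilon$ the $i$-th coordinate of $x-y$ lies in $(a_i-\epsilon,b_i+\epsilon) \subseteq (a_i-\delta/2,b_i+\delta/2)$, so $x-y \in \inflateN{B}{\delta/2}$ and therefore $\CutoffFc{B}{\delta}(x) = \I{\R^d}{\charFunc{\inflateN{B}{\delta/2}}(x-y)\,\rho_{\epsilon}(y)}{y} = \I{\R^d}{\rho_{\epsilon}(y)}{y} = 1$, i.e.\ $\restrictN{\CutoffFc{B}{\delta}}{B}\equiv 1$. The bounds $0 \leq \CutoffFc{B}{\delta} \leq 1$ are immediate from $0 \leq \charFunc{\inflateN{B}{\delta/2}} \leq 1$ and $\rho_{\epsilon}\geq 0$ with $\I{\R^d}{\rho_{\epsilon}}{x}=1$. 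Finally, using the identity for $\DN{\alpha}{\CutoffFc{B}{\delta}}$ above together with Young's convolution inequality and the scaling $\norm[\Lp{1}{\R^d}]{\DN{\alpha}{\rho_{\epsilon}}} = \epsilon^{-\abs{\alpha}}\norm[\Lp{1}{\R^d}]{\DN{\alpha}{\rho}}$, we obtain $\norm[\Lp{\infty}{\R^d}]{\DN{\alpha}{\CutoffFc{B}{\delta}}} \leq \epsilon^{-\abs{\alpha}}\norm[\Lp{1}{\R^d}]{\DN{\alpha}{\rho}} \cleq \delta^{-\abs{\alpha}}$, with a constant depending only on $\rho$ and $\abs{\alpha}$; taking the maximum over $\abs{\alpha}=l$ gives $\seminorm[\Wkp{l}{\infty}{\R^d}]{\CutoffFc{B}{\delta}} \cleq \delta^{-l}$.

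There is no genuine obstacle here; the only point requiring a little care is keeping the two length scales separated — inflating by $\delta/2$ before mollifying, and using a mollifier of radius strictly below $\delta/2$ — so that the closed set $\supp{\CutoffFc{B}{\delta}}$ still fits inside the \emph{open} box $\inflateN{B}{\delta}$ while $\CutoffFc{B}{\delta}$ is already identically $1$ on $B$. With the tensor-product choice of $\rho$, the whole argument reduces to the single one-dimensional fact that $\charFunc{(a-\delta/2,\,b+\delta/2)} * \eta_{\epsilon}$ is supported in $(a-\delta,b+\delta)$, equals $1$ on $(a,b)$, and has $l$-th derivative of size $\Landau{\delta^{-l}}$, from which the $d$-dimensional statement follows by taking products.
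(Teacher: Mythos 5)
Your proof is correct. It takes a different construction than the paper: the paper simply writes down an explicit tensor product $\CutoffFc{B}{\delta}(x) := \prod_{i=1}^{d} g((a_i-x_i)/\delta)\,g((x_i-b_i)/\delta)$ built from a fixed univariate smooth step $g$ with $g\equiv 1$ on $(-\infty,0]$ and $g\equiv 0$ on $[1/2,\infty)$, and then all five properties are read off by inspection (support in $\inflateN{B}{\delta/2}$, value $1$ on $B$, derivatives of size $\delta^{-l}$ by the chain rule). You instead mollify the indicator $\charFunc{\inflateN{B}{\delta/2}}$ with a mollifier of radius $\delta/4$, and your bookkeeping of the two length scales is exactly right: the closed support $\closureN{\inflateN{B}{\delta/2}}+\closureN{\Ball{0}{\delta/4}}$ sits inside the open box $\inflateN{B}{\delta}$, the value is $1$ on $B$ because the mollifier's support never leaves $\inflateN{B}{\delta/2}$ when centered at a point of $B$, and Young's inequality with the scaling $\norm[\Lp{1}{\R^d}]{\DN{\alpha}{\rho_{\epsilon}}}=\epsilon^{-\abs{\alpha}}\norm[\Lp{1}{\R^d}]{\DN{\alpha}{\rho}}$ gives the derivative bounds. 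The trade-off is minor: the paper's formula is shorter and makes the properties immediate, while your mollification argument is slightly more machinery but equally standard; your closing remark that a tensor-product mollifier reduces everything to one dimension essentially recovers the paper's construction, since $\charFunc{(a-\delta/2,b+\delta/2)}*\eta_{\epsilon}$ is precisely a smooth step of the type the paper postulates.
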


\begin{proof}
Write $B = \bigtimes_{i=1}^{d} (a_i,b_i)$ and pick a univariate function $g \in \Ck{\infty}{\R}$ with $0 \leq g \leq 1$, $\restrictN{g}{(-\infty,0]} \equiv 1$ and $\restrictN{g}{[1/2,\infty)} \equiv 0$. Then, the function $\CutoffFc{B}{\delta}(x) := \prod_{i=1}^{d} g((a_i-x_i)/\delta)g((x_i-b_i)/\delta)$, $x \in \R^d$, is a valid choice.
%
\end{proof}

It is convenient to introduce a name for the cut-off process described earlier:
\begin{definition} \label{Cut_off_op}
Let $B \subseteq \R^d$ be a box, $\delta>0$ and $\CutoffFc{B}{\delta} \in \CkO{\infty}{\R^d}$ be the smooth cut-off function from \cref{Cut_off_fct}. We define the corresponding \emph{cut-off operator}
\begin{equation*}
\fDefB[\CutoffOp{B}{\delta}]{V}{\Hk{k}{\R^d}}{v}{\CutoffFc{B}{\delta} v}.
\end{equation*}
\end{definition}

In the following, we summarize the most important facts about the cut-off operator.
\begin{lemma} \label{Cut_off_op_Props}
Let $B \subseteq \R^d$ be a box and $\delta>0$. For all $v \in V$, the linear operator $\CutoffOp{B}{\delta}$ has the cut-off property $\supp{\CutoffOp{B}{\delta} v} \subseteq \inflateN{B}{\delta}$ and the local projection property $\restrict{\CutoffOp{B}{\delta} v}{B} = \restrictN{v}{B}$. Furthermore, 
there holds the stability bound
\begin{equation*}
\sum_{l=0}^{k} \delta^l \seminorm[\Hk{l}{\R^d}]{\CutoffOp{B}{\delta} v} \leq C(d,k) \sum_{l=0}^{k} \delta^l \seminorm[\Hk{l}{\inflateN{B}{\delta}}]{v}.
\end{equation*}
\end{lemma}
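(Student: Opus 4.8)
The plan is to treat the three assertions in turn; the support and local-projection properties are immediate, and the stability bound reduces to the Leibniz product rule combined with the derivative bounds on $\CutoffFc{B}{\delta}$ recorded in \cref{Cut_off_fct}.

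First I would check why $\CutoffOp{B}{\delta}$ is well defined as a map $V \to \Hk{k}{\R^d}$ (and not merely into $\HkLoc{k}{\R^d}$): by \cref{Native_space_Props} every $v \in V$ lies in $\HkLoc{k}{\R^d}$, so its restriction to the bounded box $\inflateN{B}{\delta}$ belongs to $\Hk{k}{\inflateN{B}{\delta}}$; multiplying by $\CutoffFc{B}{\delta} \in \CkO{\infty}{\R^d}$, which has compact support in $\inflateN{B}{\delta}$, produces a compactly supported, $k$-times weakly differentiable function whose weak derivatives are square-integrable, i.e.\ an element of $\Hk{k}{\R^d}$. The cut-off property $\supp{\CutoffOp{B}{\delta} v} \subseteq \inflateN{B}{\delta}$ is then inherited from $\supp{\CutoffFc{B}{\delta}} \subseteq \inflateN{B}{\delta}$, and the local projection property $\restrict{\CutoffOp{B}{\delta} v}{B} = \restrictN{v}{B}$ from $\restrictN{\CutoffFc{B}{\delta}}{B} \equiv 1$; linearity of $v \mapsto \CutoffFc{B}{\delta} v$ is clear.

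For the stability bound I would fix a multi-index $\alpha$ with $\abs{\alpha} = l \in \set{0,\dots,k}$ and expand $\DN{\alpha}{(\CutoffFc{B}{\delta} v)}$ by the Leibniz rule into the finite sum of terms $\binom{\alpha}{\beta}(\DN{\beta}{\CutoffFc{B}{\delta}})(\DN{\alpha-\beta}{v})$ over $\beta \leq \alpha$. Each such term is supported in $\inflateN{B}{\delta}$, and \cref{Cut_off_fct} gives $\norm[\Lp{\infty}{\R^d}]{\DN{\beta}{\CutoffFc{B}{\delta}}} \leq \seminorm[\Wkp{\abs{\beta}}{\infty}{\R^d}]{\CutoffFc{B}{\delta}} \cleq \delta^{-\abs{\beta}}$, so its $\Lp{2}{\R^d}$-norm is $\cleq \delta^{-\abs{\beta}} \seminorm[\Hk{\abs{\alpha-\beta}}{\inflateN{B}{\delta}}]{v}$. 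Writing $j := \abs{\alpha-\beta} \in \set{0,\dots,l}$, multiplying through by $\delta^l$ so that $\delta^l \delta^{-\abs{\beta}} = \delta^{j}$, and then summing over $\beta \leq \alpha$, over $\abs{\alpha} = l$, and finally over $l \in \set{0,\dots,k}$, all powers of $\delta$ collapse onto the right-hand side $\sum_{j=0}^{k} \delta^j \seminorm[\Hk{j}{\inflateN{B}{\delta}}]{v}$, with a purely combinatorial constant depending only on $d$ and $k$.

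There is no serious obstacle here: the only points needing care are the preliminary verification that the product genuinely lands in $\Hk{k}{\R^d}$ (which is exactly where the compact support of $\CutoffFc{B}{\delta}$ is used) and the bookkeeping of the $\delta$-powers in the Leibniz expansion — one must pair the global weight $\delta^l$ with the worst-case factor $\delta^{-\abs{\beta}}$ so that the weight attached to $\seminorm[\Hk{j}{\inflateN{B}{\delta}}]{v}$ comes out as precisely $\delta^{j}$ with $j = \abs{\alpha-\beta}$, matching the form of the asserted estimate.
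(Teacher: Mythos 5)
Your proposal is correct and follows essentially the same route as the paper: support and projection properties read off from \cref{Cut_off_fct}, and the stability bound via the Leibniz rule with the bounds $\seminorm[\Wkp{j}{\infty}{\R^d}]{\CutoffFc{B}{\delta}} \cleq \delta^{-j}$, so that the weights $\delta^l \delta^{-\abs{\beta}}$ collapse onto $\delta^{\abs{\alpha-\beta}}$. The only addition is your explicit check that the product lands in $\Hk{k}{\R^d}$, which the paper leaves implicit via $V \subseteq \HkLoc{k}{\R^d}$.
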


\begin{proof}
Denote by $\CutoffFc{B}{\delta} \in \CkO{\infty}{\R^d}$ the smooth cut-off function from \cref{Cut_off_fct} and let $v \in V$. The relations $\supp{\CutoffOp{B}{\delta} v} \subseteq \inflateN{B}{\delta}$ and $\restrict{\CutoffOp{B}{\delta} v}{B} = \restrictN{v}{B}$ readily follow from \cref{Cut_off_fct}. As for the stability bound, we have that $v \in V \subseteq \HkLoc{k}{\R^d}$ (cf. \cref{Native_space_Props}). Then, using Leibniz' product rule for derivatives, we obtain
\begin{equation*}
\sum_{l=0}^{k} \delta^l \seminorm[\Hk{l}{\R^d}]{\CutoffOp{B}{\delta} v} = \sum_{l=0}^{k} \delta^l \seminorm[\Hk{l}{\inflateN{B}{\delta}}]{\CutoffFc{B}{\delta} v} \cleq \sum_{l=0}^{k} \delta^l \sum_{i=0}^{l} \seminorm[\Wkp{l-i}{\infty}{\R^d}]{\CutoffFc{B}{\delta}} \seminorm[\Hk{i}{\inflateN{B}{\delta}}]{v} \cleq \sum_{l=0}^{k} \delta^l \sum_{i=0}^{l} \delta^{i-l} \seminorm[\Hk{i}{\inflateN{B}{\delta}}]{v} \cleq \sum_{l=0}^{k} \delta^l \seminorm[\Hk{l}{\inflateN{B}{\delta}}]{v}.
\end{equation*}
\end{proof}

\subsection{The space $\VHarm{B}$} \label{SSec:VHarm}

The key components of our proof are the subspaces $\VHarm{B} \subseteq V$ to be defined now. They exhibit a number of properties that allow us to find good low-dimensional approximants to their members. 
\begin{definition} \label{Space_VHarm}
Let $B \subseteq \R^d$ open. We say that a function $u \in V$ is \emph{homogeneous and harmonic on $B$}, if it satisfies the following conditions:
\begin{enumerate}
\item For all $n \in \set{1,\dots,N}$ with $x_n \in B$, there holds $u(x_n) = 0$.
\item For all $v \in V_0$ with $\supp{v} \subseteq B$, there holds $\bilinear[a]{u}{v} = 0$.
\end{enumerate}
The subspace of functions $u \in V$ which are homogeneous and harmonic functions on $B$ is denoted by $\VHarm{B} \subseteq V$.

\end{definition}

Loosely speaking, the space $\VHarm{B}$ consists of all functions $u \in V$ that vanish on $\set{x_1,\dots,x_N} \cap B$ and satisfy $\DN{2k}{u} = 0$ on $B\backslash\set{x_1,\dots,x_N}$ in a weak sense (cf. \cref{Diff_op}, \cref{Bil_form_Diff_op}). Furthermore, note that $\VHarm{B}$ is an infinite-dimensional space, in general.

The next lemma establishes the fact that these spaces are nested and interact nicely with the solution operator $\fDef{S_N}{V}{V}$ from \cref{Solution_op} and the cut-off operator $\fDef{\CutoffOp{B}{\delta}}{V}{\Hk{k}{\R^d}}$ from \cref{Cut_off_op}.

\begin{lemma} \label{Space_VHarm_Props}
\leavevmode
\begin{enumerate}
\item For all $B \subseteq B^+ \subseteq \R^d$, there holds $\VHarm{B^+} \subseteq \VHarm{B}$.
\item For all $B,D \subseteq \R^d$ with $B \cap D = \emptyset$ and all $f \in V$ with $\supp{f} \subseteq D$, there holds $S_N f \in \VHarm{B}$.
\item For all $B \subseteq \R^d$, $\delta>0$ and $u \in \VHarm{B}$, there holds $\CutoffOp{B}{\delta} u \in \VHarm{B}$.
\end{enumerate}
\end{lemma}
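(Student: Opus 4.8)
These I would read off directly from \cref{Space_VHarm}. For $(1)$, the observation is that both defining conditions of $\VHarm{\cdot}$ only become \emph{weaker} as the box is enlarged: if $x_n \in B \subseteq B^+$ the point condition is inherited, and if $v \in V_0$ with $\supp{v} \subseteq B \subseteq B^+$ the orthogonality condition is inherited; hence $\VHarm{B^+} \subseteq \VHarm{B}$. For $(2)$, I would use that $u := S_N f$ satisfies $E_N u = E_N f$ and $\bilinear[a]{u}{v} = 0$ for \emph{all} $v \in V_0$ (\cref{Solution_op}). The latter is precisely condition $(2)$ of \cref{Space_VHarm} (for any box), while for condition $(1)$ I would note that $\supp{f} \subseteq D$ and $B \cap D = \emptyset$ force $f \equiv 0$ on $B$, so $u(x_n) = f(x_n) = 0$ whenever $x_n \in B$.

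\textbf{Item $(3)$.} This is the one that needs an argument. Fix a box $B$, $\delta > 0$, $u \in \VHarm{B}$, and abbreviate $\kappa := \CutoffFc{B}{\delta}$. First I would record that $\CutoffOp{B}{\delta} u = \kappa u$ is an admissible candidate: by \cref{Cut_off_op_Props} and \cref{Native_space_Props}, $\CutoffOp{B}{\delta}$ maps $V$ into $\Hk{k}{\R^d} \subseteq V$. Condition $(1)$ of \cref{Space_VHarm} is then immediate: $\kappa \equiv 1$ on $B$ by \cref{Cut_off_fct}, so for $x_n \in B$ one has $(\kappa u)(x_n) = u(x_n) = 0$ since $u \in \VHarm{B}$.

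The crux is condition $(2)$: given $v \in V_0$ with $\supp{v} \subseteq B$, show $\bilinear[a]{\kappa u}{v} = 0$. My plan is to prove the identity $\bilinear[a]{\kappa u}{v} = \bilinear[a]{u}{v}$ and then invoke $u \in \VHarm{B}$ — condition $(2)$ there, applied to the test function $v \in V_0$ supported in $B$ — so that the right-hand side vanishes. For the identity, the key point is that $\kappa \equiv 1$ on the \emph{open} box $B$, hence every classical (equivalently, weak) derivative $\DN{\beta}{\kappa}$ with $\abs{\beta} \geq 1$ vanishes identically on $B$. Applying Leibniz' product rule to $\kappa u$ — legitimate because $\kappa$ is smooth and $u \in V \subseteq \HkLoc{k}{\R^d}$ by \cref{Native_space_Props} — the sum collapses on $B$ to $\DN{\alpha}{(\kappa u)} = \DN{\alpha}{u}$ a.e.\ on $B$, for all $\abs{\alpha} \leq k$. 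On the other hand, $\supp{v} \subseteq B$ forces $\DN{\alpha}{v} = 0$ a.e.\ on $\R^d \backslash B$, so every $\Lp{2}{\R^d}$-inner product occurring in $\bilinear[a]{\kappa u}{v}$ reduces to an integral over $B$, where $\DN{\alpha}{(\kappa u)}$ may be swapped for $\DN{\alpha}{u}$; re-summing the weighted contributions yields $\bilinear[a]{\kappa u}{v} = \bilinear[a]{u}{v}$.

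\textbf{Anticipated obstacle.} There is no conceptual difficulty here; the only care needed in $(3)$ is the measure-theoretic bookkeeping for the last step — that the weak derivatives of $\kappa u$ really coincide with those of $u$ on the open set $B$ (this is where $u \in \HkLoc{k}{\R^d}$ and the smoothness of $\kappa$ enter) and that $\supp{v} \subseteq B$ genuinely localizes each integral to $B$. I would present both as routine consequences of the Leibniz rule for Sobolev functions and the definition of the support, without grinding through the details.
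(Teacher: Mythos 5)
Your proposal is correct and follows essentially the same route as the paper: items (1) and (2) are read off the definitions exactly as in the paper's proof, and for item (3) the paper likewise localizes each $L^2$-inner product to $B$ using $\supp{v}\subseteq B$ and then swaps $\DN{\alpha}{(\CutoffOp{B}{\delta}u)}$ for $\DN{\alpha}{u}$ on $B$ via the local projection property $\restrict{\CutoffOp{B}{\delta}u}{B}=\restrictN{u}{B}$ of \cref{Cut_off_op_Props}, which is precisely your Leibniz-rule observation packaged as a lemma.
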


\begin{proof}
Item $(1)$ is obvious. In order to see $(2)$, consider subsets $B,D$ and functions $f$ as above. Then, for all $n \in \set{1,\dots,N}$ with $x_n \in B$, \cref{Solution_op} and the assumptions on $B,D$ tell us that $(S_N f)(x_n) = f(x_n) = 0$. Furthermore, for all $v \in V_0$ (even with arbitrary support), $\bilinear[a]{S_N f}{v} = 0$. This proves $S_N f \in \VHarm{B}$. Finally, let us prove item $(3)$: Let $B \subseteq \R^d$, $\delta>0$ and $u \in \VHarm{B}$. For all $n \in \set{1,\dots,N}$ with $x_n \in B$, we have $(\CutoffOp{B}{\delta} u)(x_n) = u(x_n) = 0$, owing to the fact that $\restrict{\CutoffOp{B}{\delta} u}{B} = \restrictN{u}{B}$ (cf. \cref{Cut_off_op_Props}). On the other hand, for all $v \in V_0$ with $\supp{v} \subseteq B$, we have
\begin{equation*}
\bilinear[a]{\CutoffOp{B}{\delta}u}{v} \stackrel{\cref{Native_space}}{=} \sum_{l=k_{\min}}^{k} \sigma_l \sum_{\abs{\alpha}=l} \frac{l!}{\alpha!} \skalar[\Lp{2}{B}]{\D{\alpha}{\CutoffOp{B}{\delta}u}}{\DN{\alpha}{v}} \stackrel{\cref{Cut_off_op_Props}}{=} \sum_{l=k_{\min}}^{k} \sigma_l \sum_{\abs{\alpha}=l} \frac{l!}{\alpha!} \skalar[\Lp{2}{B}]{\DN{\alpha}{u}}{\DN{\alpha}{v}} = \bilinear[a]{u}{v} = 0.
\end{equation*}
\end{proof}

Note that the membership $S_N f \in \VHarm{B}$ only uses the interpolation and orthogonality conditions from \cref{Solution_op}. The explicit representation $S_N f = \sum_{n=1}^{N} \mvemph{c}_n \phi_n + \sum_{\abs{\alpha}<k_{\min}} \mvemph{d}_{\alpha} \pi_{\alpha}$ from \cref{LSE_cd_to_u} is completely irrelevant in this context. In fact, \cref{Mat_lvl_to_fct_lvl} already contained the last (implicit) occurrence of the fundamental solution $\phi$ for the remainder of this paper.

While \cref{Space_VHarm_Props} covers the basic, quick-to-prove aspects of the spaces $\VHarm{B}$, we still need two more ingredients. Most importantly, we need a \emph{Caccioppoli-type inequality} for functions $u \in \VHarm{\inflateN{B}{\delta}}$, i.e., we want to bound the $k$-th derivatives of $u$ on the box $B$ by its lower-order derivatives on the slightly larger box $\inflateN{B}{\delta}$. 

\begin{lemma} \label{Space_VHarm_Cacc}
Let $B \subseteq \R^d$ be a box and $\delta>0$ with $\delta \cleq 1$. Then, there holds the \emph{Caccioppoli inequality}:
\begin{equation*}
\forall u \in \VHarm{\inflateN{B}{\delta}}: \quad \quad \delta^k \seminorm[\Hk{k}{B}]{u} \leq C(d,k) \sum_{l=0}^{k-1} \delta^l \seminorm[\Hk{l}{\inflateN{B}{\delta}}]{u}.
\end{equation*}
\end{lemma}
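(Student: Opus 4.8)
The plan is to prove this as a standard Caccioppoli-type estimate: test the weak harmonicity encoded in \cref{Space_VHarm}$(2)$ against a sufficiently high power of a cut-off function times $u$ itself, and then absorb the top-order term. Concretely, I would fix $u \in \VHarm{\inflateN{B}{\delta}}$, write $\eta := \CutoffFc{B}{\delta} \in \CkO{\infty}{\R^d}$ for the cut-off function from \cref{Cut_off_fct} (so $\restrictN{\eta}{B} \equiv 1$, $\supp{\eta} \subseteq \inflateN{B}{\delta}$, $0 \leq \eta \leq 1$ and $\seminorm[\Wkp{l}{\infty}{\R^d}]{\eta} \cleq \delta^{-l}$ for all $l \in \N_0$), and consider the test function $v := \eta^{2k} u$. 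Since $u \in V \subseteq \HkLoc{k}{\R^d}$ by \cref{Native_space_Props} and $\eta^{2k} \in \CkO{\infty}{\R^d}$, the product $v$ has compact support and lies in $\Hk{k}{\R^d} \subseteq V$; moreover $v(x_n) = 0$ for every $n$, because $u(x_n) = 0$ whenever $x_n \in \inflateN{B}{\delta}$ by condition $(1)$ of \cref{Space_VHarm} and $\eta(x_n) = 0$ whenever $x_n \notin \inflateN{B}{\delta}$. Hence $v \in V_0$ with $\supp{v} \subseteq \inflateN{B}{\delta}$, and condition $(2)$ of \cref{Space_VHarm} yields $\bilinear[a]{u}{v} = 0$.

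Next I would expand $0 = \bilinear[a]{u}{\eta^{2k}u}$ via Leibniz' rule, $\DN{\alpha}{(\eta^{2k}u)} = \sum_{\beta \leq \alpha} \binom{\alpha}{\beta} \DN{\beta}{(\eta^{2k})}\DN{\alpha-\beta}{u}$, and split off the term $\beta = 0$. This gives $0 = T_1 + T_2$, where
\begin{equation*}
T_1 := \sum_{l=k_{\min}}^{k} \sigma_l \sum_{\abs{\alpha}=l} \frac{l!}{\alpha!} \I{\R^d}{\eta^{2k}\abs{\DN{\alpha}{u}}^2}{x} \geq 0
\end{equation*}
and $T_2$ is a finite sum of terms $\sigma_l \tfrac{l!}{\alpha!}\binom{\alpha}{\beta} \I{\R^d}{\DN{\alpha}{u}\, \DN{\beta}{(\eta^{2k})}\, \DN{\alpha-\beta}{u}}{x}$ with $k_{\min} \leq \abs{\alpha}=l \leq k$ and $0 \neq \beta \leq \alpha$. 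Since $\eta \equiv 1$ on $B$, all summands of $T_1$ are nonnegative and $\sigma_k > 0$, so $T_1 \cgeq \seminorm[\Hk{k}{B}]{u}^2$; simultaneously $T_1 = -T_2 \leq \abs{T_2}$.

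The core of the proof is bounding $T_2$. The key elementary observation is that $\abs{\DN{\beta}{(\eta^{2k})}} \cleq \eta^{2k-\abs{\beta}}\delta^{-\abs{\beta}}$ for all $\abs{\beta} \leq 2k$, which follows from Leibniz' rule together with $\seminorm[\Wkp{l}{\infty}{\R^d}]{\eta} \cleq \delta^{-l}$ and $0 \leq \eta \leq 1$. For a term of $T_2$ with $\abs{\alpha} = k$ and $\abs{\beta} = j \in \set{1,\dots,k}$ I would split the surviving weight as $\eta^{2k-j} = \eta^{k}\eta^{k-j}$ and apply Cauchy--Schwarz, obtaining (using $0 \leq \eta \leq 1$, $\supp{\eta} \subseteq \inflateN{B}{\delta}$ and the definition of $T_1$)
\begin{equation*}
\absB{ \I{\R^d}{\DN{\alpha}{u}\, \DN{\beta}{(\eta^{2k})}\, \DN{\alpha-\beta}{u}}{x} } \cleq \delta^{-j} \left( \I{\R^d}{\eta^{2k}\abs{\DN{\alpha}{u}}^2}{x} \right)^{1/2} \seminorm[\Hk{k-j}{\inflateN{B}{\delta}}]{u} \cleq \delta^{-j}\, T_1^{1/2}\, \seminorm[\Hk{k-j}{\inflateN{B}{\delta}}]{u}.
\end{equation*}
Summing over such terms and substituting $m := k-j$ bounds the $\abs{\alpha}=k$ part of $\abs{T_2}$ by $T_1^{1/2}\delta^{-k}\, Y$, where $Y := \sum_{l=0}^{k-1} \delta^l \seminorm[\Hk{l}{\inflateN{B}{\delta}}]{u}$ is exactly the right-hand side of the claim (up to the constant). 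For a term with $\abs{\alpha} = l \leq k-1$ I would estimate crudely instead, using $\abs{\DN{\beta}{(\eta^{2k})}} \cleq \delta^{-j}$ and Cauchy--Schwarz: its modulus is $\cleq \delta^{-j}\seminorm[\Hk{l}{\inflateN{B}{\delta}}]{u}\seminorm[\Hk{l-j}{\inflateN{B}{\delta}}]{u} \cleq \delta^{-2l}Y^2 \cleq \delta^{-2k}Y^2$, where $0 \leq l-j \leq l \leq k-1$ and the hypothesis $\delta \cleq 1$ are used. Altogether $\abs{T_2} \cleq T_1^{1/2}\delta^{-k}Y + \delta^{-2k}Y^2$.

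Finally I would close the argument by absorption: from $T_1 \leq \abs{T_2} \cleq T_1^{1/2}\delta^{-k}Y + \delta^{-2k}Y^2$ and Young's inequality $T_1^{1/2}\delta^{-k}Y \leq \frac{1}{2}T_1 + C\delta^{-2k}Y^2$ one obtains $T_1 \cleq \delta^{-2k}Y^2$, and combining with $\seminorm[\Hk{k}{B}]{u}^2 \cleq T_1$ gives $\delta^k\seminorm[\Hk{k}{B}]{u} \cleq Y = \sum_{l=0}^{k-1}\delta^l\seminorm[\Hk{l}{\inflateN{B}{\delta}}]{u}$, which is the assertion. I expect the only genuine difficulty to be the bookkeeping in the estimate of $T_2$: one must take the power of $\eta$ in the test function large enough — $\eta^{2k}$ is the natural choice — so that for every cross term in which $0 < j \leq k$ derivatives fall on $\eta^{2k}$ the remaining weight $\eta^{2k-j}$ splits as $\eta^k\eta^{k-j}$; this is precisely what lets Cauchy--Schwarz produce the factor $\left( \I{\R^d}{\eta^{2k}\abs{\DN{\alpha}{u}}^2}{x} \right)^{1/2}$, which is absorbable into $T_1$ (supported on $B$, sitting on the left-hand side), rather than an uncontrollable $H^k$-seminorm on the larger box $\inflateN{B}{\delta}$.
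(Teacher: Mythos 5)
Your proof is correct, and it reaches the estimate by a slightly different technical route than the paper. Both arguments are Caccioppoli arguments that test the orthogonality in \cref{Space_VHarm}$(2)$ with a cut-off times $u$ and then absorb the top-order term, but they diverge in how the cross terms are handled. The paper uses the low power $\kappa^2$ as the weight; the cross terms with exactly one derivative on $\kappa^2$ are then absorbable by Young's inequality exactly as in your argument, but the terms where two or more derivatives fall on $\kappa^2$ lose all surviving powers of $\kappa$ (e.g.\ $\partial_i\kappa\,\partial_j\kappa$), so the paper integrates by parts once in a suitable coordinate direction to move one derivative off $\DN{\alpha}{u}$ and reduce everything to products of lower-order seminorms on $\inflateN{B}{\delta}$. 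You instead take the high power $\eta^{2k}$ and exploit the pointwise bound $\abs{\DN{\beta}{(\eta^{2k})}} \cleq \eta^{2k-\abs{\beta}}\delta^{-\abs{\beta}}$, which leaves enough of the weight ($\eta^{2k-j} = \eta^k\eta^{k-j}$ with $j\leq k$) to make every top-order cross term absorbable by weighted Cauchy--Schwarz, while the sub-top-order terms ($\abs{\alpha}\leq k-1$) are controlled crudely since they already appear on the right-hand side. Your version avoids the integration by parts entirely at the cost of the Fa\`a di Bruno/Leibniz bookkeeping for $\DN{\beta}{(\eta^{2k})}$; the paper's version keeps the weight minimal at the cost of one partial integration. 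Both are standard devices and both yield the same constant structure, so either proof is acceptable.
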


\begin{proof}
Let us abbreviate $\kappa := \CutoffFc{B}{\delta} \in \CkO{\infty}{\R^d}$ for the smooth cut-off function from \cref{Cut_off_fct}. Recall that $\seminorm[\Wkp{l}{\infty}{\R^d}]{\kappa} \cleq \delta^{-l}$ and $\seminorm[\Wkp{l}{\infty}{\R^d}]{\kappa^2} \cleq \sum_{i=0}^{l} \seminorm[\Wkp{i}{\infty}{\R^d}]{\kappa} \seminorm[\Wkp{l-i}{\infty}{\R^d}]{\kappa} \cleq \delta^{-l}$. Furthermore, $\kappa(x_n) = 0$ for all $n \in \set{1,\dots,N}$ satisfying $x_n \notin \inflateN{B}{\delta}$.

Now, let $u \in \VHarm{\inflateN{B}{\delta}}$. From \cref{Space_VHarm} we know that $u(x_n) = 0$ for all $n \in \set{1,\dots,N}$ with $x_n \in \inflateN{B}{\delta}$. In particular, the product $v := \kappa^2 u \in V$ satisfies $v(x_n) = 0$ for \emph{all} $n \in \set{1,\dots,N}$. In other words, $v \in V_0$. Furthermore, using \cref{Cut_off_op_Props}, we have $\supp{v} \subseteq \supp{\kappa} \subseteq \inflateN{B}{\delta}$. This proves that $v$ is an admissible test function for \cref{Space_VHarm}, i.e., $\bilinear[a]{u}{v} = 0$. Plugging in \cref{Native_space}, we get the identity
\begin{equation*}
0 = \bilinear[a]{u}{\kappa^2 u} = \sum_{l=k_{\min}}^{k} \sigma_l \sum_{\abs{\alpha}=l} \frac{l!}{\alpha!} \skalar[\Lp{2}{\R^d}]{\DN{\alpha}{u}}{\D{\alpha}{\kappa^2 u}} = \sum_{l=k_{\min}}^{k} \sigma_l \sum_{\abs{\alpha}=l} \frac{l!}{\alpha!} \sum_{\beta \leq \alpha} \binom{\alpha}{\beta} \skalar[\Lp{2}{\R^d}]{\DN{\alpha}{u}}{\D{\alpha-\beta}{\kappa^2} \DN{\beta}{u}}.
\end{equation*}

We transfer the summands with $\beta<\alpha$ to the other side of the equality and obtain the following expression:
\begin{eqnarray*}
\sum_{l=k_{\min}}^{k} \sigma_l \sum_{\abs{\alpha}=l} \frac{l!}{\alpha!} \norm[\Lp{2}{\R^d}]{\kappa \DN{\alpha}{u}}^2 &=& -\sum_{l=k_{\min}}^{k} \sigma_l \sum_{\abs{\alpha}=l} \frac{l!}{\alpha!} \sum_{\beta<\alpha} \binom{\alpha}{\beta} \skalar[\Lp{2}{\R^d}]{\DN{\alpha}{u}}{\D{\alpha-\beta}{\kappa^2} \DN{\beta}{u}} \\
&\cleq& \sum_{l=k_{\min}}^{k} \sigma_l \sum_{\abs{\alpha}=l} \frac{l!}{\alpha!} \[ \sum_{i=1}^{d} \abs{\skalar[\Lp{2}{\R^d}]{\DN{\alpha}{u}}{\partial{i}{\kappa^2} \DN{\alpha-e_i}{u}}} \\
&& \hspace{15em} + \sum_{\substack{\beta < \alpha, \\ \abs{\beta} \leq \abs{\alpha}-2}} \abs{\skalar[\Lp{2}{\R^d}]{\DN{\alpha}{u}}{\D{\alpha-\beta}{\kappa^2} \DN{\beta}{u}}} \].
\end{eqnarray*}

For the summands in the first sum, we use Young's inequality (with variable $\epsilon>0$):
\begin{eqnarray*}
\abs{\skalar[\Lp{2}{\R^d}]{\DN{\alpha}{u}}{\partial{i}{\kappa^2} \DN{\alpha-e_i}{u}}} &=& 2\abs{\skalar[\Lp{2}{\inflateN{B}{\delta}}]{\kappa \DN{\alpha}{u}}{(\partialN{i}{\kappa}) \DN{\alpha-e_i}{u}}} \\
&\cleq& \norm[\Lp{2}{\R^d}]{\kappa \DN{\alpha}{u}} \norm[\Lp{\infty}{\R^d}]{\partialN{i}{\kappa}} \norm[\Lp{2}{\inflateN{B}{\delta}}]{\DN{\alpha-e_i}{u}} \\
&\cleq& \norm[\Lp{2}{\R^d}]{\kappa \DN{\alpha}{u}} \delta^{-1} \seminorm[\Hk{\abs{\alpha}-1}{\inflateN{B}{\delta}}]{u} \\
&\cleq& \epsilon \norm[\Lp{2}{\R^d}]{\kappa \DN{\alpha}{u}}^2 + \epsilon^{-1} \delta^{-2} \seminorm[\Hk{\abs{\alpha}-1}{\inflateN{B}{\delta}}]{u}^2.
\end{eqnarray*}
Note that, by choosing $\epsilon$ sufficiently small, we can absorb the $\Landau{\epsilon}$-term in the left-hand side of the overall inequality.

For the summands in the second sum, we can pick an index $i \in \set{1,\dots,d}$ with $\alpha_i \geq 1$ (in the case $\alpha=0$, the sum is empty anyways). Then, we perform partial integration with respect to the $i$-th coordinate:
\begin{eqnarray*}
\abs{\skalar[\Lp{2}{\R^d}]{\DN{\alpha}{u}}{\D{\alpha-\beta}{\kappa^2} \DN{\beta}{u}}} &=& \abs{\skalar[\Lp{2}{\inflateN{B}{\delta}}]{\DN{\alpha-e_i}{u}}{\D{\alpha-\beta+e_i}{\kappa^2} \DN{\beta}{u} + \D{\alpha-\beta}{\kappa^2} \DN{\beta+e_i}{u}}} \\
&\leq& \norm[\Lp{2}{\inflateN{B}{\delta}}]{\DN{\alpha-e_i}{u}} (\norm[\Lp{\infty}{\R^d}]{\D{\alpha-\beta+e_i}{\kappa^2}} \norm[\Lp{2}{\inflateN{B}{\delta}}]{\DN{\beta}{u}} + \norm[\Lp{\infty}{\R^d}]{\D{\alpha-\beta}{\kappa^2}} \norm[\Lp{2}{\inflateN{B}{\delta}}]{\DN{\beta+e_i}{u}}) \\
&\cleq& \seminorm[\Hk{\abs{\alpha}-1}{\inflateN{B}{\delta}}]{u} (\delta^{-\abs{\alpha}+\abs{\beta}-1} \seminorm[\Hk{\abs{\beta}}{\inflateN{B}{\delta}}]{u} + \delta^{-\abs{\alpha}+\abs{\beta}} \seminorm[\Hk{\abs{\beta}+1}{\inflateN{B}{\delta}}]{u}) \\
&=& \delta^{-2\abs{\alpha}} (\delta^{\abs{\alpha}-1} \seminorm[\Hk{\abs{\alpha}-1}{\inflateN{B}{\delta}}]{u}) (\delta^{\abs{\beta}} \seminorm[\Hk{\abs{\beta}}{\inflateN{B}{\delta}}]{u} + \delta^{\abs{\beta}+1} \seminorm[\Hk{\abs{\beta}+1}{\inflateN{B}{\delta}}]{u}) \\
&\cleq& \delta^{-2\abs{\alpha}} \sum_{i=0}^{\abs{\alpha}-1} \delta^{2i} \seminorm[\Hk{i}{\inflateN{B}{\delta}}]{u}^2.
\end{eqnarray*}

Finally, we put everything together (exploiting $\kappa \equiv 1$ on $B$):
\begin{equation*}
\delta^{2k} \seminorm[\Hk{k}{B}]{u}^2 \stackrel{\sigma_k>0}{\cleq} \delta^{2k} \sum_{l=k_{\min}}^{k} \sigma_l \sum_{\abs{\alpha}=l} \frac{l!}{\alpha!} \norm[\Lp{2}{\R^d}]{\kappa \DN{\alpha}{u}}^2 \cleq \sum_{l=k_{\min}}^{k} \sigma_l \sum_{\abs{\alpha}=l} \frac{l!}{\alpha!} \underbrace{\delta^{2(k-\abs{\alpha})}}_{\cleq 1} \sum_{i=0}^{\abs{\alpha}-1} \delta^{2i} \seminorm[\Hk{i}{\inflateN{B}{\delta}}]{u}^2 \cleq \sum_{l=0}^{k-1} \delta^{2l} \seminorm[\Hk{l}{\inflateN{B}{\delta}}]{u}^2.
\end{equation*}
This concludes the proof.
\end{proof}

We end the discussion of the spaces $\VHarm{B}$ with the following observation: If $k_{\min}=0$, we remember from \cref{SSec:Interpol_problem_1} that $(V,\bilinear[a]{\cdot}{\cdot})$ is a Hilbert space. One can then show that $\VHarm{B} \subseteq V$ is a closed subspace with respect to the norm $\seminorm[a]{\cdot}$, so that the orthogonal projection from $V$ to $\VHarm{B}$ is well-defined. However, in the general case $k_{\min} \geq 0$, this line of reasoning is not possible anymore and we need to find a different projection $\fDef{P_{B,H}}{V}{\VHarm{B}}$ that is in some sense stable. The idea here is to replace $\bilinear[a]{\cdot}{\cdot}$ by a \emph{strictly} positive definite inner product $\bilinear[b]{\cdot}{\cdot}$ and then use the corresponding orthogonal projection. The new inner product is weighted with a free parameter $H>0$ that will be important for the definition of the low-rank approximation operator $\fDef{\Pi_H}{\Hk{k}{\R^d}}{\Hk{k}{\R^d} \cap \Ck{\infty}{\R^d}}$ in \cref{Low_rank_approx_op} below.

\begin{lemma} \label{Space_VHarm_OP}
Let $B \subseteq \R^d$ be a box with $\meas{B}>0$ and let $H>0$. There exists a linear operator
\begin{equation*}
\fDef{P_{B,H}}{V}{\VHarm{B}}
\end{equation*}
with the following properties:
\begin{enumerate}
\item \emph{Projection:} For all $v \in \VHarm{B}$, there holds $P_{B,H} v = v$.
\item \emph{Stability:} For all $v \in V$, there holds the stability bound
\begin{equation*}
\sum_{l=k_{\min}}^{k} H^l \seminorm[\Hk{l}{\R^d}]{P_{B,H} v} + \sum_{l=0}^{k} H^l \seminorm[\Hk{l}{B}]{P_{B,H} v} \leq C(d,k) \( \sum_{l=k_{\min}}^{k} H^l \seminorm[\Hk{l}{\R^d}]{v} + \sum_{l=0}^{k} H^l \seminorm[\Hk{l}{B}]{v} \).
\end{equation*}
\end{enumerate}
\end{lemma}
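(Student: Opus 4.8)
The plan is to realise $P_{B,H}$ as an orthogonal projection with respect to an auxiliary, genuinely positive definite inner product on $V$ that already encodes the weighted norm of the stability bound. For $u,v \in V$ I would put
\begin{equation*}
\bilinear[b]{u}{v} := \sum_{l=k_{\min}}^{k} H^{2l} \sigma_l \sum_{\abs{\alpha}=l} \frac{l!}{\alpha!} \skalar[\Lp{2}{\R^d}]{\DN{\alpha}{u}}{\DN{\alpha}{v}} + \sum_{l=0}^{k} H^{2l} \sum_{\abs{\alpha}=l} \frac{l!}{\alpha!} \skalar[\Lp{2}{B}]{\DN{\alpha}{u}}{\DN{\alpha}{v}}.
\end{equation*}
All integrals are finite: the weak derivatives of orders $k_{\min},\dots,k$ of $v \in V$ lie globally in $\Lp{2}{\R^d}$, and $v \in \HkLoc{k}{\R^d}$ by \cref{Native_space_Props}, so $\restrictN{v}{B} \in \Hk{k}{B}$ since a box is a bounded Lipschitz domain. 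The form $\bilinear[b]{\cdot}{\cdot}$ is symmetric and positive semi-definite, and it is in fact positive definite: if $\bilinear[b]{v}{v}=0$ then $\seminorm[a]{v}=0$, hence $v \in P = \Pp{k_{\min}-1}{\R^d}$ by \cref{Native_space_Props}, and simultaneously $\restrictN{v}{B}\equiv 0$; a polynomial vanishing on a set of positive measure is zero, so $v=0$. This is the only place where the hypothesis $\meas{B}>0$ enters. I would also record the norm equivalence $\bilinear[b]{v}{v}^{1/2} \ceq \sum_{l=k_{\min}}^{k} H^l \seminorm[\Hk{l}{\R^d}]{v} + \sum_{l=0}^{k} H^l \seminorm[\Hk{l}{B}]{v}$ with constants independent of $H$: the $B$-part is just the trivial equivalence $\sum_j a_j^2 \ceq (\sum_j a_j)^2$ for finitely many $a_j \geq 0$, and the global part uses $\sigma_{k_{\min}},\sigma_k>0$ together with the interpolation argument underlying \cref{Native_space_Props}$(7)$, now applied with $H$-weights via Young's inequality.

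Next I would prove that $(V,\bilinear[b]{\cdot}{\cdot})$ is complete, hence a Hilbert space; this is a variant of the completeness proofs in \cref{Native_space_Props} and \cref{Hom_native_space_Props} and is the main technical point. Given a $\bilinear[b]{\cdot}{\cdot}$-Cauchy sequence $(v_n)$, it is Cauchy in $\seminorm[a]{\cdot}$ and in $\norm[\Hk{k}{B}]{\cdot}$. Choosing balls $\closureN{B}\subseteq\Omega_1\subseteq\Omega_2\subseteq\dots$ exhausting $\R^d$ and applying \cref{Poincare_inequality} (with $\Omega=\Omega_1$ and $\iota_Z$ the restriction to $B$, which is injective on $\Pp{k-1}{\Omega_1}$ because $\meas{B}>0$; and with $\Omega=\Omega_i$, $\iota_Z$ the restriction to $\Omega_{i-1}$ for $i\geq 2$), while using $\seminorm[\Hk{k}{\Omega_i}]{\cdot}\cleq\seminorm[a]{\cdot}$ from \cref{Native_space_Props}$(7)$, one finds that $(v_n)$ is Cauchy in every $\Hk{k}{\Omega_i}$; patching the limits yields $v\in\HkLoc{k}{\R^d}$ with $v_n\to v$ locally, and since the derivatives of orders $k_{\min},\dots,k$ of $v_n$ converge in $\Lp{2}{\R^d}$ one concludes $v\in V$ and $\bilinear[b]{v-v_n}{v-v_n}\to 0$. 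Finally, $\VHarm{B}$ is a \emph{closed} subspace: each condition defining $\VHarm{B}$ in \cref{Space_VHarm} exhibits it as the kernel of a $\bilinear[b]{\cdot}{\cdot}$-continuous linear functional — the evaluations $u\mapsto u(x_n)$ with $x_n\in B$ are continuous since $\bilinear[b]{\cdot}{\cdot}^{1/2}\cgeq\norm[\Hk{k}{B}]{\cdot}$ and $\Hk{k}{B}\hookrightarrow\Ck{0}{\closureN{B}}$ (as $k>d/2$), and $u\mapsto\bilinear[a]{u}{v}$ for fixed admissible $v$ is continuous by the Cauchy--Schwarz inequality of \cref{Native_space_Props}.

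With these preparations I define $P_{B,H}$ to be the $\bilinear[b]{\cdot}{\cdot}$-orthogonal projection of $V$ onto the closed subspace $\VHarm{B}$; this is a well-defined bounded linear operator with range contained in $\VHarm{B}$. Property $(1)$ is the defining property of an orthogonal projection. For property $(2)$ one uses that orthogonal projections are contractions, $\bilinear[b]{P_{B,H}v}{P_{B,H}v}\leq\bilinear[b]{v}{v}$, and inserts the norm equivalence of the first paragraph on both sides, which yields exactly the asserted stability estimate with an $H$-independent constant (its dependence on the $\sigma_l$ being absorbed into the generic constant, consistent with the paper's conventions). The main obstacle is the completeness-and-closedness bookkeeping of the second paragraph; the norm equivalence and the two abstract properties are then routine.
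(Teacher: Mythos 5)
Your proposal is correct and follows essentially the same route as the paper: an auxiliary, strictly positive definite inner product combining the weighted global seminorms with a weighted full $H^k(B)$-inner product, completeness via the Poincar\'e patching argument, closedness of $\VHarm{B}$ via the Sobolev embedding and Cauchy--Schwarz, and then the $b$-orthogonal projection with stability from non-expansiveness plus the norm equivalence. The only (harmless) deviation is that you keep the coefficients $\sigma_l$ in the global part of $\bilinear[b]{\cdot}{\cdot}$, which forces the extra interpolation/Young step to recover the intermediate seminorms from $\sigma_{k_{\min}},\sigma_k>0$ and makes the constant depend on the $\sigma_l$; the paper's form omits the $\sigma_l$ and gets the equivalence, and hence the stated $C(d,k)$, directly.
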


\begin{proof}
We equip the native space $V$ from \cref{Native_space} with the following bilinear form:
\begin{equation*}
\forall u,v \in V: \quad \quad \bilinear[b]{u}{v} := \sum_{l=k_{\min}}^{k} H^{2l} \sum_{\abs{\alpha}=l} \skalar[\Lp{2}{\R^d}]{\DN{\alpha}{u}}{\DN{\alpha}{v}} + \sum_{l=0}^{k} H^{2l} \sum_{\abs{\alpha}=l} \skalar[\Lp{2}{B}]{\DN{\alpha}{u}}{\DN{\alpha}{v}}.
\end{equation*}
We remind the reader of \cref{Native_space_Props}, where the inclusion $V \subseteq \HkLoc{k}{\R^d}$ was derived. In particular, the local quantities $\seminorm[\Hk{l}{B}]{u}$ are finite for \emph{all} $l \in \set{0,\dots,k}$, so that $\bilinear[b]{\cdot}{\cdot}$ is indeed well-defined. Note that the assumption $\meas{B}>0$ guarantees the \emph{strict} positive definiteness of $\bilinear[b]{\cdot}{\cdot}$ on all of $V$. The proof of completeness of $(V,\bilinear[b]{\cdot}{\cdot})$ is very similar to the one of \cref{Hom_native_space_Props} and will therefore be omitted. Finally, using the continuous Sobolev embedding $\Hk{k}{B} \subseteq \Ck{0}{B}$ and the Cauchy-Schwarz inequality for $\bilinear[a]{\cdot}{\cdot}$ (cf. \cref{Native_space_Props}), one can show that $\VHarm{B}$ is a closed subspace of $V$ with respect to $\bilinear[b]{\cdot}{\cdot}$. Consequently, the $\bilinear[b]{\cdot}{\cdot}$-orthogonal projection $\fDef{P_{B,H}}{V}{\VHarm{B}}$ is well-defined. The asserted stability bound follows immediately from the fact that $\norm[b]{P_{B,H} v} \leq \norm[b]{v}$ for all $v \in V$. This finishes the proof.
\end{proof}

\subsection{The low-rank approximation operator} \label{SSec:Low_rank_approx_op}

We remind the reader of \cref{SSec:Mat_lvl_to_fct_lvl}, where we argued that we need to construct a certain subspace $V_{B,D,L} \subseteq V$ of low dimension. More precisely, our goal was to achieve an algebraic dimension bound of the form $\dim{V_{B,D,L}} \cleq L^{c_1}$, where $c_1>0$ is some constant. For this purpose, we use an approximation operator of moderate rank with good local approximation properties. Our construction is a ``partition of unity'' method (see, e.g., \cite{Melenk_PUFEM} and \cite{Melenk_PUFEM_2}) on a perfect tensor-product grid with meshsize $H>0$ that spans all of $\R^d$. A pleasing side effect of this method is the fact that the rank of this operator varies ''smoothly`` with respect to the parameter $H$. Once again, we make use of the axes-parallel boxes $B \subseteq \R^d$ and their inflated relatives $\inflateN{B}{\delta} \subseteq \R^d$, $\delta>0$ for the proof (cf. \cref{Box} and \cref{Box_infl}).

\begin{lemma} \label{Low_rank_approx_op}
Let $H>0$ be a free parameter. Then, there exists a linear operator
\begin{equation*}
\fDef{\Pi_H}{\Hk{k}{\R^d}}{\Hk{k}{\R^d} \cap \Ck{\infty}{\R^d}}
\end{equation*}
with the following properties:
\begin{enumerate}
\item \emph{Local rank:} For every box $B \subseteq \R^d$, there holds the dimension bound
\begin{equation*}
\dimN{\Set{\Pi_H v}{v \in \Hk{k}{\R^d} \,\, \text{with} \,\, \supp{v} \subseteq B}} \leq C(d,k)(1+\diam{B}/H)^d.
\end{equation*}

\item \emph{Stability/error bound:} For all $v \in \Hk{k}{\R^d}$, there hold the following stability and error estimates:
\begin{equation*}
\begin{array}{rcl}
\sum_{l=0}^{k} H^l \seminorm[\Hk{l}{\R^d}]{\Pi_H v} &\leq& C(d,k) \sum_{l=0}^{k} H^l \seminorm[\Hk{l}{\R^d}]{v}, \\
\sum_{l=0}^{k} H^l \seminorm[\Hk{l}{\R^d}]{v-\Pi_H v} &\leq& C(d,k) H^k \seminorm[\Hk{k}{\R^d}]{v}.
\end{array}
\end{equation*}
\end{enumerate}
\end{lemma}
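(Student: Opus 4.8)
The plan is to realize $\Pi_H$ as a classical partition-of-unity quasi-interpolant on the uniform tensor grid $H\Z^d$; constructions of this type are standard (see, e.g., \cite{Melenk_PUFEM}). First I would fix a reference bump $\theta \in \CkO{\infty}{\R^d}$ with $\supp{\theta} \subseteq (-1,1)^d$ and $\theta \equiv 1$ on $[-1/2,1/2]^d$, and for each node $z \in H\Z^d$ set $\theta_z(x) := \theta((x-z)/H)$, so that $\supp{\theta_z} \subseteq \omega_z := z + (-H,H)^d$. Since the cubes $z + [-H/2,H/2]^d$, $z \in H\Z^d$, tile $\R^d$, the locally finite sum $\Theta := \sum_{z \in H\Z^d} \theta_z$ is a smooth function with $\Theta \geq 1$ everywhere, and I would define the smooth partition of unity $\psi_z := \theta_z / \Theta \in \CkO{\infty}{\R^d}$; it is subordinate to the cover $\set{\omega_z}$, satisfies $\sum_z \psi_z \equiv 1$, and a change of variables yields the scaling bounds $\seminorm[\Wkp{l}{\infty}{\R^d}]{\psi_z} \cleq H^{-l}$ for all $l \in \N_0$, uniformly in $z$ and $H$. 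On each cube let $\fDef{\Pi_{\omega_z}}{\Hk{k}{\omega_z}}{\Pp{k-1}{\omega_z}}$ be the averaged Taylor polynomial of order $k$ (Dupont--Scott); since cubes are convex of fixed shape, the Bramble--Hilbert lemma gives $\seminorm[\Hk{j}{\omega_z}]{v - \Pi_{\omega_z} v} \cleq H^{k-j} \seminorm[\Hk{k}{\omega_z}]{v}$ for all $0 \leq j \leq k$, with constant independent of $H$ and $z$. Finally I would set $\Pi_H v := \sum_{z \in H\Z^d} \psi_z\, \Pi_{\omega_z}(v|_{\omega_z})$, each $\Pi_{\omega_z}(v|_{\omega_z})$ regarded as a global polynomial; the sum is locally finite, so $\Pi_H v \in \Ck{\infty}{\R^d}$, and $\Pi_H$ is visibly linear.

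For the local rank bound $(1)$: if $\supp{v} \subseteq B$ then $v|_{\omega_z} = 0$, hence $\Pi_{\omega_z}(v|_{\omega_z}) = 0$, whenever $\omega_z \cap B = \emptyset$, so $\Pi_H v$ lies in the fixed span of the functions $\psi_z x^\alpha$ with $\omega_z \cap B \neq \emptyset$ and $\abs{\alpha} \leq k-1$. The number of nodes $z \in H\Z^d$ with $\omega_z \cap B \neq \emptyset$ is bounded by $C(d)(1+\diam[2]{B}/H)^d$, and $\dimN{\Pp{k-1}{\R^d}} = \binom{d+k-1}{d}$ depends only on $d$ and $k$; multiplying the two gives the claimed bound.

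For $(2)$: using $\sum_z \psi_z \equiv 1$ we have $v - \Pi_H v = \sum_{z} \psi_z\,(v - \Pi_{\omega_z}(v|_{\omega_z}))$ on all of $\R^d$. I would apply $\DN{\alpha}{}$ with $\abs{\alpha} = l \leq k$, expand by Leibniz' rule, insert the scaling bounds $\seminorm[\Wkp{l-j}{\infty}{\R^d}]{\psi_z} \cleq H^{j-l}$ together with the Bramble--Hilbert estimates $\seminorm[\Hk{j}{\omega_z}]{v - \Pi_{\omega_z}(v|_{\omega_z})} \cleq H^{k-j} \seminorm[\Hk{k}{\omega_z}]{v}$, then square, sum the $\Lp{2}{\omega_z}$-contributions over $z$, and use the uniformly bounded overlap of the cubes $\omega_z$. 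This yields $\seminorm[\Hk{l}{\R^d}]{v - \Pi_H v} \cleq H^{k-l} \seminorm[\Hk{k}{\R^d}]{v}$, i.e.\ $H^l \seminorm[\Hk{l}{\R^d}]{v - \Pi_H v} \cleq H^k \seminorm[\Hk{k}{\R^d}]{v}$, which is the asserted error estimate after summing over $l \in \set{0,\dots,k}$. The stability estimate then follows from the triangle inequality, $H^l \seminorm[\Hk{l}{\R^d}]{\Pi_H v} \leq H^l \seminorm[\Hk{l}{\R^d}]{v} + H^l \seminorm[\Hk{l}{\R^d}]{v - \Pi_H v} \cleq H^l \seminorm[\Hk{l}{\R^d}]{v} + H^k \seminorm[\Hk{k}{\R^d}]{v}$, where the last term is dominated by $\sum_{j=0}^{k} H^j \seminorm[\Hk{j}{\R^d}]{v}$; in particular all seminorms of $\Pi_H v$ up to order $k$ are finite, so $\Pi_H v \in \Hk{k}{\R^d} \cap \Ck{\infty}{\R^d}$, as required.

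I do not expect a deep obstacle here, since the construction is classical; the two points requiring care are the following. First, one must use a genuine polynomial quasi-interpolant (the averaged Taylor polynomial, not the plain $\Lp{2}{\omega_z}$-orthogonal projection) so that \emph{all} of the intermediate seminorms $\seminorm[\Hk{j}{\omega_z}]{v - \Pi_{\omega_z}(v|_{\omega_z})}$, $1 \leq j \leq k$, are simultaneously controlled by $H^{k-j} \seminorm[\Hk{k}{\omega_z}]{v}$. Second, the finite-overlap and scaling bookkeeping in $(2)$ must be carried out so that every implied constant depends only on $d$ and $k$ and not on $H$ or on the position of the box $B$ in $(1)$.
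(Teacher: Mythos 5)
Your proposal is correct and follows essentially the same route as the paper: a partition-of-unity quasi-interpolant on the uniform grid $H\Z^d$, local polynomial projections onto $\Pp{k-1}{\cdot}$ controlled by Bramble--Hilbert and scaling, a patch-counting argument for the local rank, and finite overlap plus Leibniz for the global error bound, with stability by the triangle inequality. The only (immaterial) differences are that the paper obtains the partition of unity by mollifying $\charFunc{[0,1)}$ and tensorizing, rather than normalizing a bump by its periodized sum, and uses the $\Hk{k}{\hat{\Omega}}$-orthogonal projection on a reference patch instead of averaged Taylor polynomials.
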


\begin{proof}
Let $\mu \in \CkO{\infty}{\R}$ be a ''mollifier`` with $\supp{\mu} \subseteq [-1/4,1/4]$, $\mu \geq 0$ and $\I{\R}{\mu(x)}{x} = 1$. Consider the mollified characteristic function of the interval $[0,1) \subseteq \R$, i.e., $\hat{g}_1(x) := (\mu*\charFunc{[0,1)})(x) = \I{\R}{\mu(y)\charFunc{[0,1)}(x-y)}{y}$. There holds $\hat{g}_1 \in \CkO{\infty}{\R}$, $\supp{\hat{g}_1} \subseteq [-1/4,5/4]$, $\hat{g}_1 \geq 0$ and $\sum_{m \in \Z} \hat{g}_1(x+m) = \I{\R}{\mu(y)\sum_{m \in \Z} \charFunc{y-x+[0,1)}(m)}{y} = \I{\R}{\mu(y)}{y} = 1$ for all $x \in \R$. (Note that, for every given $x$, the sum contains at most two non-zero summands.) Set $\hat{\Omega} := [-1/4,5/4]^d \subseteq \R^d$ (''reference patch``) and $\hat{g}(x) := \prod_{i=1}^{d} \hat{g}_1(x_i)$ for all $x \in \R^d$ (''reference bump function``). There holds $\hat{g} \in \CkO{\infty}{\R^d}$, $\supp{\hat{g}} \subseteq \hat{\Omega}$, $\hat{g} \geq 0$ and $\sum_{m \in \Z^d} \hat{g}(x+m) = 1$ for all $x \in \R^d$ (at most $2^d$ non-zero summands). Furthermore, denote by $\fDef{\hat{\Pi}}{\Hk{k}{\hat{\Omega}}}{\Pp{k-1}{\hat{\Omega}}}$ the orthogonal projection with respect to $\skalar[\Hk{k}{\hat{\Omega}}]{\cdot}{\cdot}$.

Next, let $H>0$ be a given parameter (''meshwidth``). For every $m \in \Z^d$, consider the affine transformations $\fDef{F_m,F_m^{-1}}{\R^d}{\R^d}$, given by $F_m(\hat{x}) := H(\hat{x}+m)$ and $F_m^{-1}(x) := x/H-m$. The $m$-th patch and $m$-th bump function are defined by $\Omega_m := F_m(\hat{\Omega}) \subseteq \R^d$ and $g_m(x) := \hat{g}(F_m^{-1}(x))$ for all $x \in \R^d$. There holds $g_m \in \CkO{\infty}{\R^d}$, $\supp{g_m} \subseteq \Omega_m$, $g_m \geq 0$ and $\sum_{m \in \Z^d} g_m(x) = 1$ for all $x \in \R^d$ (at most $2^d$ non-zero summands). Furthermore, for all $l \in \N_0$, we have the relation $\seminorm[\Wkp{l}{\infty}{\R^d}]{g_m} = H^{-l}\seminorm[\Wkp{l}{\infty}{\R^d}]{\hat{g}} \ceq H^{-l}$.

Now we have everything we need to define the asserted operator:
\begin{equation*}
\fDefB[\Pi_H]{\Hk{k}{\R^d}}{\Hk{k}{\R^d} \cap \Ck{\infty}{\R^d}}{v}{\sum_{m \in \Z^d} (\hat{\Pi}(v \circ F_m) \circ F_m^{-1})g_m}.
\end{equation*}
Note that we implicitly restricted $v \circ F_m$ from $\R^d$ to $\hat{\Omega}$ and extended the polynomial $\hat{\Pi}(v \circ F_m) \circ F_m^{-1}$ from $\Omega_m$ to $\R^d$. Furthermore, for every compact $K \subseteq \R^d$, the sum in $\restrict{\Pi_H v}{K}$ contains only finitely many non-zero summands. In particular, since $\hat{\Pi}(v \circ F_m) \circ F_m^{-1} \in \Pp{k-1}{\R^d} \subseteq \Ck{\infty}{\R^d}$ and $g_m \in \CkO{\infty}{\R^d}$, we have $\Pi_H v \in \Ck{\infty}{\R^d}$, indeed. The fact that $\Pi_H v \in \Hk{k}{\R^d}$ follows from the stability bound below.

As for the dimension bound, let $B \subseteq \R^d$ be a box and denote by $ms(B) := \Set{m \in \Z^d}{B \cap \Omega_m \neq \emptyset}$ the set of indices of the adjacent patches. We will need an upper bound for the cardinality of $ms(B)$ in terms of $\diam{B}$ and $H$. To this end, we use appropriate sub- and supersets of $\bigcup_{m \in ms(B)} \Omega_m$ and exploit the monotonicity of the $d$-dimensional Lebesgue measure, denoted by $\meas{\cdot}$. On one hand, since every patch $\Omega_m$ is itself a box of side length $3H/2$, it is not surprising that $\bigcup_{m \in ms(B)} \Omega_m \subseteq \inflateN{B}{3H/2}$, where $\inflateN{B}{3H/2}$ denotes the inflated box in the sense of \cref{Box_infl}. On the other hand, for every $m \in ms(B)$, consider the $m$-th subpatch $\omega_m := F_m([0,1]^d) \subseteq \Omega_m$. Clearly, these subpatches are pairwise disjoint and fulfill $\bigcup_{m \in ms(B)} \Omega_m \supseteq \bigcup_{m \in ms(B)} \omega_m$. Combining both inclusions, we get the desired bound for $\cardN{ms(B)}$:
\begin{equation*}
\cardN{ms(B)} H^d = \sum_{m \in ms(B)} H^d = \sum_{m \in ms(B)} \meas{\omega_m} = \measB{\bigcup_{m \in ms(B)} \omega_m} \leq \measB{\bigcup_{m \in ms(B)} \Omega_m} \leq \meas{\inflateN{B}{3H/2}} \leq C(d)(\diam{B}+H)^d.
\end{equation*}

Now, for every $v \in \Hk{k}{\R^d}$ with $\supp{v} \subseteq B$, the support properties of the bump functions $g_m$ guarantee that the sum in $\Pi_H v$ only ranges over $ms(B)$, rather than all of $\Z^d$. Therefore,
\begin{equation*}
\begin{array}{rcl}
\dimN{\Set{\Pi_H v}{v \in \Hk{k}{\R^d}, \supp{v} \subseteq B}} &=& \dimN{\Set{\sum_{m \in ms(B)} (\hat{\Pi}(v \circ F_m) \circ F_m^{-1})g_m}{v \in \Hk{k}{\R^d}, \supp{v} \subseteq B}} \\
&\leq& \dimN{\Set{\sum_{m \in ms(B)} v_m g_m}{v_m \in \Pp{k-1}{\R^d}}} \\
&\leq& \cardN{ms(B)} \cdot \dim{\Pp{k-1}{\R^d}} \\
&\leq& C(d,k)(1+\diam{B}/H)^d.
\end{array}
\end{equation*}

Now on to the stability and error estimates. The derivation is based on the following facts:
\begin{enumerate}
\item \emph{Partition of unity:} For every $v \in \Hk{k}{\R^d}$ there holds $v = v \cdot 1 = \sum_{m \in \Z^d} v g_m$.
\item \emph{Scaling argument:} For all $v \in \Hk{k}{\Omega_m}$, there holds $v \circ F_m \in \Hk{k}{\hat{\Omega}}$ with $H^l \seminorm[\Hk{l}{\Omega_m}]{v} \ceq H^{d/2} \seminorm[\Hk{l}{\hat{\Omega}}]{v \circ F_m}$.
\item \emph{Bramble-Hilbert:} For all $v \in \Hk{k}{\hat{\Omega}}$, there holds $\norm[\Hk{k}{\hat{\Omega}}]{v-\hat{\Pi}v} \leq C(d,k,\hat{\Omega}) \seminorm[\Hk{k}{\hat{\Omega}}]{v}$.
\end{enumerate}

Let $v \in \Hk{k}{\R^d}$ and $n \in \Z^d$. Again, we denote by $ms(n) := \Set{m \in \Z^d}{\norm[\infty]{m-n} \leq 1}$ the indices of the patches touching $\Omega_n$. Using $(1)$, $(2)$ and $(3)$, we can establish a \emph{local} error bound first:
\begin{equation*}
\begin{array}{rcl}
\sum_{l=0}^{k} H^l \seminorm[\Hk{l}{\Omega_n}]{v-\Pi_H v} &\stackrel{(1)}{=}& \sum_{l=0}^{k} H^l \seminorm[\Hk{l}{\Omega_n}]{\sum_{m \in ms(n)} (v-\hat{\Pi}(v \circ F_m) \circ F_m^{-1})g_m} \\
&\leq& \sum_{l=0}^{k} H^l \sum_{m \in ms(n)} \seminorm[\Hk{l}{\Omega_n \cap \Omega_m}]{(v-\hat{\Pi}(v \circ F_m) \circ F_m^{-1})g_m} \\
&\cleq& \sum_{l=0}^{k} H^l \sum_{m \in ms(n)} \sum_{j=0}^{l} \seminorm[\Hk{j}{\Omega_m}]{v-\hat{\Pi}(v \circ F_m) \circ F_m^{-1}} \seminorm[\Wkp{l-j}{\infty}{\R^d}]{g_m} \\
&\ceq& \sum_{l=0}^{k} H^l \sum_{m \in ms(n)} \sum_{j=0}^{l} \seminorm[\Hk{j}{\Omega_m}]{v-\hat{\Pi}(v \circ F_m) \circ F_m^{-1}} H^{j-l} \\
&\cleq& \sum_{m \in ms(n)} \sum_{j=0}^{k} H^j \seminorm[\Hk{j}{\Omega_m}]{v-\hat{\Pi}(v \circ F_m) \circ F_m^{-1}} \\
&\stackrel{(2)}{\ceq}& \sum_{m \in ms(n)} H^{d/2} \norm[\Hk{k}{\hat{\Omega}}]{v \circ F_m - \hat{\Pi}(v \circ F_m)} \\
&\stackrel{(3)}{\cleq}& \sum_{m \in ms(n)} H^{d/2} \seminorm[\Hk{k}{\hat{\Omega}}]{v \circ F_m} \\
&\stackrel{(2)}{\cleq}& H^k \sum_{m \in ms(n)} \seminorm[\Hk{k}{\Omega_m}]{v}.
\end{array}
\end{equation*}

To get the desired \emph{global} error bound, we exploit the covering property $\R^d \subseteq \bigcup_{n \in \Z^d} \Omega_n$ and sum up the local error contributions from above. Additionally, we use $\cardN{ms(n)} = 3^d$ and the fact that $\cardN{\Set{n \in \Z^d}{x \in \Omega_n}} \leq 2^d$ for all $x \in \R^d$: 
\begin{equation*}
\begin{array}{rclcl}
\sum_{l=0}^{k} H^{2l} \seminorm[\Hk{l}{\R^d}]{v-\Pi_H v}^2 &\leq& \sum_{n \in \Z^d} \sum_{l=0}^{k} H^{2l} \seminorm[\Hk{l}{\Omega_n}]{v-\Pi_H v}^2 &\cleq& \sum_{n \in \Z^d} H^{2k} \sum_{m \in ms(n)} \seminorm[\Hk{k}{\Omega_m}]{v}^2 \\
&=& 3^d H^{2k} \sum_{n \in \Z^d} \seminorm[\Hk{k}{\Omega_n}]{v}^2 &\leq& 6^d H^{2k} \seminorm[\Hk{k}{\R^d}]{v}^2.
\end{array}
\end{equation*}

Finally, the stability bound can be shown via a simple triangle inequality and the previously established error bound. This concludes the proof.
\end{proof}

\subsection{The single- and multi-step coarsening operators} \label{SSec:Coarse_ops}

This section contains the heart of our proof. Using the subspaces $\VHarm{B} \subseteq V$ from \cref{Space_VHarm}, let us quickly recapitulate the proof outline from \cref{SSec:Mat_lvl_to_fct_lvl}: Given a function $u \in \VHarm{\inflateN{B}{\delta}}$, our goal is to construct a function $\tilde{u} \in \VHarm{B}$ of low ''dimension`` that is a good approximation to $u$ on the box $B \subseteq \R^d$. To this end, we concatenate the cut-off operator $\fDef{\CutoffOp{B}{\delta/2}}{V}{\Hk{k}{\R^d}}$ from \cref{Cut_off_op}, the low-rank approximation operator $\fDef{\Pi_H}{\Hk{k}{\R^d}}{\Hk{k}{\R^d} \cap \Ck{\infty}{\R^d}}$ from \cref{Low_rank_approx_op} and the projection $\fDef{P_{B,H}}{V}{\VHarm{B}}$ from \cref{Space_VHarm_OP}. The cut-off operator guarantees that the right-hand side of the error estimate is a \emph{local} quantity, i.e., $\seminorm[\Hk{l}{\inflateN{B}{\delta}}]{\cdot}$. The low-rank operator $\Pi_H$ is responsible for the reduction of the ''dimension`` of $\tilde{u}$. Finally, the projection $P_{B,H}$ maps the output of $\Pi_H$ back into the space $\VHarm{B}$. 

\begin{theorem} \label{Coarse_op_single}
Let $B \subseteq \R^d$ be a box with $\meas{B}>0$ and $\delta>0$ be a free parameter with $\delta \cleq 1$. Then, there exist a constant $\CSSCO \geq 1$ and a linear \emph{single-step coarsening operator}
\begin{equation*}
\fDef{\CoarseningOp{B}{\delta}}{\VHarm{\inflateN{B}{\delta}}}{\VHarm{B}}
\end{equation*}
with the following properties:
\begin{enumerate}
\item \emph{Rank bound:} The rank is bounded by
\begin{equation*}
\rank{\CoarseningOp{B}{\delta}} \leq C(d,k) (1+\diam{B}/\delta)^d.
\end{equation*}

\item \emph{Approximation error:} For all $u \in \VHarm{\inflateN{B}{\delta}}$, there holds the error bound
\begin{equation*}
\sum_{l=0}^{k} (\delta/\CSSCO)^l \seminorm[\Hk{l}{B}]{u - \CoarseningOp{B}{\delta} u} \leq \frac{1}{2} \sum_{l=0}^{k} (\delta/\CSSCO)^l \seminorm[\Hk{l}{\inflateN{B}{\delta}}]{u}.
\end{equation*}
\end{enumerate}
\end{theorem}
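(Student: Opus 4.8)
The operator is the concatenation $\CoarseningOp{B}{\delta} := P_{B,H} \circ \Pi_H \circ \CutoffOp{B}{\delta/2}$, where the meshwidth $H$ is chosen proportional to $\delta$, say $H := c\delta$ for a small constant $c>0$ to be fixed at the end. I would first check well-definedness and the target space: $\CutoffOp{B}{\delta/2}$ maps $V \to \Hk{k}{\R^d}$, then $\Pi_H$ maps into $\Hk{k}{\R^d} \cap \Ck{\infty}{\R^d} \subseteq V$, and finally $P_{B,H}$ maps back into $\VHarm{B}$. So $\CoarseningOp{B}{\delta}$ indeed takes values in $\VHarm{B}$; the restriction of its domain to $\VHarm{\inflateN{B}{\delta}}$ is what makes the error estimate work (so that $\CutoffOp{B}{\delta/2} u$ restricted to $\inflate{B}{\delta/2}$ agrees with $u$, and the Caccioppoli inequality from \cref{Space_VHarm_Cacc} is available on $u \in \VHarm{\inflateN{B}{\delta}}$).

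\emph{Rank bound.} This is the easy part. The rank of $\CoarseningOp{B}{\delta}$ is at most the rank of $\Pi_H$ restricted to functions supported in $\inflateN{B}{\delta/2}$, since $\CutoffOp{B}{\delta/2}$ produces functions supported in that box and $P_{B,H}$ can only decrease rank. By the local rank bound in \cref{Low_rank_approx_op}(1), this is $\leq C(d,k)(1+\diam{\inflateN{B}{\delta/2}}/H)^d$. Since $\diam{\inflateN{B}{\delta/2}} = \diam{B} + \delta\sqrt{d}$ (or at most $\diam{B} + C\delta$) and $H = c\delta$, this is $\cleq (1+\diam{B}/\delta)^d$, as claimed.

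\emph{Approximation error.} Here is the core. Fix $u \in \VHarm{\inflateN{B}{\delta}}$ and write the error on $B$ as a telescoping chain:
\begin{equation*}
u - \CoarseningOp{B}{\delta} u = \left(u - \CutoffOp{B}{\delta/2} u\right) + \left(\CutoffOp{B}{\delta/2} u - \Pi_H \CutoffOp{B}{\delta/2} u\right) + \left(\Pi_H \CutoffOp{B}{\delta/2} u - P_{B,H}\Pi_H\CutoffOp{B}{\delta/2} u\right).
\end{equation*}
On the box $B$ the first term vanishes, because $\restrictN{\CutoffOp{B}{\delta/2} u}{B} = \restrictN{u}{B}$ by the local projection property in \cref{Cut_off_op_Props}. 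For the third term I would use that $u$, and hence (by the local projection property of the cut-off and \cref{Space_VHarm_Props}(3), i.e.\ $\CutoffOp{B}{\delta/2} u \in \VHarm{B}$) — wait, more carefully: $\Pi_H\CutoffOp{B}{\delta/2} u$ need not lie in $\VHarm{B}$, so instead I add and subtract $P_{B,H}$ applied to something already in $\VHarm{B}$. Since $\CutoffOp{B}{\delta/2} u \in \VHarm{B}$ (by \cref{Space_VHarm_Props}(3), as $u \in \VHarm{\inflateN{B}{\delta}} \subseteq \VHarm{B}$), the projection property \cref{Space_VHarm_OP}(1) gives $P_{B,H}\CutoffOp{B}{\delta/2} u = \CutoffOp{B}{\delta/2} u$. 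Therefore
\begin{equation*}
\Pi_H\CutoffOp{B}{\delta/2} u - P_{B,H}\Pi_H\CutoffOp{B}{\delta/2} u = \left(\Pi_H\CutoffOp{B}{\delta/2} u - \CutoffOp{B}{\delta/2} u\right) - P_{B,H}\left(\Pi_H\CutoffOp{B}{\delta/2} u - \CutoffOp{B}{\delta/2} u\right),
\end{equation*}
so the third error term is controlled, via the stability \cref{Space_VHarm_OP}(2) with parameter $H$, by the $\Hk{l}{\R^d}$- and $\Hk{l}{B}$-norms of $w := \Pi_H\CutoffOp{B}{\delta/2} u - \CutoffOp{B}{\delta/2} u$, which is exactly the quantity governed by the error bound of $\Pi_H$ in \cref{Low_rank_approx_op}(2). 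Chaining the estimates: the $\Pi_H$-error bound gives $\sum_{l=0}^k H^l \seminorm[\Hk{l}{\R^d}]{w} \cleq H^k \seminorm[\Hk{k}{\R^d}]{\CutoffOp{B}{\delta/2} u}$; the cut-off stability \cref{Cut_off_op_Props} turns $\seminorm[\Hk{k}{\R^d}]{\CutoffOp{B}{\delta/2} u}$ into $\sum_{l=0}^k \delta^{l-k}\seminorm[\Hk{l}{\inflate{B}{\delta/2}}]{u}$; and then the Caccioppoli inequality \cref{Space_VHarm_Cacc} (applicable since $u \in \VHarm{\inflateN{B}{\delta}} = \VHarm{(\inflate{B}{\delta/2})^{+\delta/2}}$) removes the top-order term $\seminorm[\Hk{k}{\inflate{B}{\delta/2}}]{u}$ in favour of $\sum_{l=0}^{k-1}\delta^{l}\seminorm[\Hk{l}{\inflateN{B}{\delta}}]{u}$, so that altogether $\sum_{l=0}^k (\delta)^l\seminorm[\Hk{l}{B}]{u-\CoarseningOp{B}{\delta} u} \cleq c^{?}\sum_{l=0}^k \delta^l \seminorm[\Hk{l}{\inflateN{B}{\delta}}]{u}$, where the implied constant depends only on $d,k$ (and $H = c\delta$). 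Defining $\CSSCO$ as a large enough multiple of that constant and rescaling $\delta \mapsto \delta/\CSSCO$ in the weighted norms (which is why the statement carries the $(\delta/\CSSCO)^l$ weights rather than plain $\delta^l$) absorbs the constant and produces the factor $\tfrac12$ on the right-hand side.

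\emph{Main obstacle.} The delicate point is bookkeeping the chain of weighted $H^k$-norm estimates so that the constant genuinely ends up depending only on $d$ and $k$ (not on $\diam B$, $\delta$, or $N$), and in particular that the gain factor $\tfrac12$ is achievable by adjusting only $\CSSCO$. This forces a careful choice of the relative scaling between $H$ and $\delta$ (so that $\Pi_H$'s $H^k$-gain beats the $\delta^{-k}$-loss from the cut-off by a controllable margin) and a consistent use of the \emph{weighted} norms $\sum_l (\delta/\CSSCO)^l \seminorm[\Hk{l}{\cdot}]{\cdot}$ throughout — the mismatch between "stability in the $H$-weighted norm'' (\cref{Space_VHarm_OP,Low_rank_approx_op}) and "error measured in the $(\delta/\CSSCO)$-weighted norm'' is exactly where one has to be careful that $H \ceq \delta \ceq \CSSCO \cdot(\delta/\CSSCO)$ makes all three weightings comparable up to $d,k$-constants. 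Everything else is a routine application of the lemmas already proved.
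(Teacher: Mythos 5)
Your proposal is correct and follows essentially the same route as the paper: the same composition $P_{B,H}\Pi_H\CutoffOp{B}{\delta/2}$ with $H\ceq\delta$, the same chain of lemmas (local projection property of the cut-off, stability of $P_{B,H}$, the $\Pi_H$ error bound, cut-off stability, Caccioppoli), and the same mechanism for the factor $\tfrac12$ — namely that Caccioppoli drops the top-order term so the remaining weights $(\delta/\CSSCO)^l$ with $l\leq k-1$ absorb the cumulative constant once $\CSSCO$ is chosen large enough. Your three-term telescope collapses, after using $P_{B,H}\CutoffOp{B}{\delta/2}u=\CutoffOp{B}{\delta/2}u$, to exactly the paper's single term $P_{B,H}(\identity-\Pi_H)\CutoffOp{B}{\delta/2}u$ on $B$.
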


\begin{proof}
Let $B \subseteq \R^d$ and $\delta>0$ as above. The asserted single-step coarsening operator is composed of three operators: First, we need the cut-off operator $\fDef{\CutoffOp{B}{\delta/2}}{V}{\Hk{k}{\R^d}}$ from \cref{Cut_off_op}. Second, let $H>0$ and denote by $\fDef{\Pi_H}{\Hk{k}{\R^d}}{\Hk{k}{\R^d} \cap \Ck{\infty}{\R^d}}$ the operator from \cref{Low_rank_approx_op}. The precise value of $H$ will be chosen during the proof (it will be $H := \delta/\CSSCO$ for some specific constant $\CSSCO \geq 1$). Third, let $\fDef{P_{B,H}}{V}{\VHarm{B}}$ be the projection from \cref{Space_VHarm_OP}. The single-step coarsening operator is then defined as
\begin{equation*}
\fDef{\CoarseningOp{B}{\delta} := P_{B,H} \Pi_H \CutoffOp{B}{\delta/2}}{\VHarm{\inflateN{B}{\delta}}}{\VHarm{B}}.
\end{equation*}
Recall from \cref{Native_space_Props} that $\Hk{k}{\R^d} \subseteq V$, so that the output of $\Pi_H$ is indeed a valid input for $P_{B,H}$.

We start the discussion of $\CoarseningOp{B}{\delta}$ with the error estimate: Let $u \in \VHarm{\inflateN{B}{\delta}}$. On one hand, we know from \cref{Space_VHarm_Props} that $u \in \VHarm{B}$ and that $\CutoffOp{B}{\delta/2} u \in \VHarm{B}$. Since $\fDef{P_{B,H}}{V}{\VHarm{B}}$ is a projection, we know that $P_{B,H} \CutoffOp{B}{\delta/2} u = \CutoffOp{B}{\delta/2} u$. It follows that $\restrictN{u}{B} = \restrict{\CutoffOp{B}{\delta/2} u}{B} = \restrict{P_{B,H} \CutoffOp{B}{\delta/2} u}{B}$, because the cut-off operator $\CutoffOp{B}{\delta/2}$ leaves $u$ untouched on the box $B$ (cf. \cref{Cut_off_op_Props}). On the other hand, we know that $u \in \VHarm{\inflateN{B}{\delta/2}}$ (again by \cref{Space_VHarm_Props}). Since $0<\delta/2 \cleq 1$, we may apply the Caccioppoli inequality from \cref{Space_VHarm_Cacc} to the box $\inflateN{B}{\delta/2}$, the parameter $\delta/2$ and the function $u \in \VHarm{\inflateN{B}{\delta/2}}$:
\begin{equation*}
\begin{array}{rcl}
\sum_{l=0}^{k} H^l \seminorm[\Hk{l}{B}]{u - \CoarseningOp{B}{\delta} u} &=& \sum_{l=0}^{k} H^l \seminorm[\Hk{l}{B}]{P_{B,H} \CutoffOp{B}{\delta/2} u - P_{B,H} \Pi_H \CutoffOp{B}{\delta/2} u} \\
&=& \sum_{l=0}^{k} H^l \seminorm[\Hk{l}{B}]{P_{B,H} (\identity - \Pi_H) \CutoffOp{B}{\delta/2} u} \\
&\stackrel{\cref{Space_VHarm_OP}}{\cleq}& \sum_{l=0}^{k} H^l \seminorm[\Hk{l}{\R^d}]{(\identity - \Pi_H) \CutoffOp{B}{\delta/2} u} \\
&\stackrel{\cref{Low_rank_approx_op}}{\cleq}& H^k \seminorm[\Hk{k}{\R^d}]{\CutoffOp{B}{\delta/2} u} \\
&\stackrel{\cref{Cut_off_op_Props}}{\cleq}& (H/\delta)^k \sum_{l=0}^{k} \delta^l \seminorm[\Hk{l}{\inflateN{B}{\delta/2}}]{u} \\
&\stackrel{\cref{Space_VHarm_Cacc}}{\cleq}& 2^{-1} (H/\delta)^k \sum_{l=0}^{k-1} \delta^l \seminorm[\Hk{l}{\inflateN{B}{\delta}}]{u}.
\end{array}
\end{equation*}

Now, denote by $\CSSCO \geq 1$ the implicit cumulative constant. We choose $H := \delta/\CSSCO>0$. Then,
\begin{equation*}
\sum_{l=0}^{k} H^l \seminorm[\Hk{l}{B}]{u - \CoarseningOp{B}{\delta} u} \leq (\CSSCO/2)(H/\delta)^k \sum_{l=0}^{k-1} \delta^l \seminorm[\Hk{l}{\inflateN{B}{\delta}}]{u} = \frac{1}{2} \sum_{l=0}^{k-1} (1/\CSSCO)^{k-1-l} H^l \seminorm[\Hk{l}{\inflateN{B}{\delta}}]{u} \leq \frac{1}{2} \sum_{l=0}^{k-1} H^l \seminorm[\Hk{l}{\inflateN{B}{\delta}}]{u}.
\end{equation*}

Finally, we turn our attention to the rank bound. For every $u \in \VHarm{\inflateN{B}{\delta}}$, we know from \cref{Cut_off_op_Props} that $\CutoffOp{B}{\delta/2} u \in \Hk{k}{\R^d}$ and that $\supp{\CutoffOp{B}{\delta/2} u} \subseteq \inflateN{B}{\delta/2}$. Using the local rank bound of the operator $\Pi_H$ (cf. \cref{Low_rank_approx_op}), we get
\begin{equation*}
\begin{array}{rclcl}
\rank{\CoarseningOp{B}{\delta}} &=& \dimN{\Set{P_{B,H} \Pi_H \CutoffOp{B}{\delta/2} u}{u \in \VHarm{\inflateN{B}{\delta}}}} &\leq& \dimN{\Set{\Pi_H v}{v \in \Hk{k}{\R^d} \,\, \text{with} \,\, \supp{v} \subseteq \inflateN{B}{\delta/2}}} \\
&\stackrel{\cref{Low_rank_approx_op}}{\leq}& C(d,k)(1+\diam{\inflateN{B}{\delta/2}}/H)^d &\stackrel{\delta \ceq H}{\leq}& C(d,k)(1+\diam{B}/H)^d.
\end{array}
\end{equation*}
This finishes the proof.
\end{proof}

A closer inspection of the previous proof tells us that the sum on the right-hand side of the approximation error actually ranges from $0$ to $k-1$, rather than $k$. However, we won't exploit this fact any further.

Next, let us combine $L \in \N$ single-step coarsening operators into one \emph{multi-step coarsening operator}:
\begin{theorem} \label{Coarse_op_multi}
Let $B \subseteq \R^d$ be a box with $\meas{B}>0$ and $\delta>0$ be a free parameter with $\delta \cleq 1$. Furthermore, let $L \in \N$. Then, there exists a linear \emph{multi-step coarsening operator}
\begin{equation*}
\fDef{\CoarseningOp{B}{\delta,L}}{\VHarm{\inflateN{B}{\delta L}}}{\VHarm{B}}
\end{equation*}
with the following properties:
\begin{enumerate}
\item \emph{Rank bound:} The rank is bounded by
\begin{equation*}
\rank{\CoarseningOp{B}{\delta,L}} \leq C(d,k) (L+\diam{B}/\delta)^{d+1}.
\end{equation*}

\item \emph{Approximation error:} For all $u \in \VHarm{\inflateN{B}{\delta L}}$, there holds the error bound
\begin{equation*}
\norm[\Hk{k}{B}]{u - \CoarseningOp{B}{\delta,L} u} \leq C(d,k) \delta^{-k} 2^{-L} \norm[\Hk{k}{\inflateN{B}{\delta L}}]{u}.
\end{equation*}
\end{enumerate}
\end{theorem}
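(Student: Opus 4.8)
The plan is to assemble $\CoarseningOp{B}{\delta,L}$ from $L$ copies of the single-step operator of \cref{Coarse_op_single}, arranged along a chain of nested inflated boxes; the crucial structural fact is that the successive approximation \emph{errors} remain inside the spaces $\VHarm{\cdot}$, so that the single-step contraction can be reapplied to them at every stage.

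\textbf{Construction.} Put $B_j := \inflateN{B}{\delta j}$ for $j = 0,\dots,L$, so that $B_0 = B$, $B_L = \inflateN{B}{\delta L}$, and $\inflateN{B_j}{\delta} = B_{j+1}$ (cf.\ the iteration remark after \cref{Box_infl}). Each $B_j$ is a box with $\meas{B_j} \geq \meas{B} > 0$, and $\delta \cleq 1$, so \cref{Coarse_op_single} furnishes single-step operators $R_j := \CoarseningOp{B_j}{\delta}\colon \VHarm{B_{j+1}} \to \VHarm{B_j}$, all sharing a common constant $\CSSCO \geq 1$. For $u \in \VHarm{B_L}$, define residuals by $e^{(L)} := u$ and, descending in $j$, $e^{(j)} := e^{(j+1)} - R_j e^{(j+1)}$ for $j = L-1,\dots,0$; then set $\CoarseningOp{B}{\delta,L} u := u - e^{(0)}$, equivalently $\identity - \CoarseningOp{B}{\delta,L} = (\identity - R_0)(\identity - R_1)\cdots(\identity - R_{L-1})$. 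A downward induction, using the nesting $\VHarm{B_{j+1}} \subseteq \VHarm{B_j}$ of \cref{Space_VHarm_Props}~(1), shows $e^{(j)} \in \VHarm{B_j}$ for all $j$: indeed $e^{(L)} = u \in \VHarm{B_L}$, and if $e^{(j+1)} \in \VHarm{B_{j+1}} \subseteq \VHarm{B_j}$ then $e^{(j+1)}$ is an admissible argument for $R_j$ and $e^{(j)} = e^{(j+1)} - R_j e^{(j+1)}$ is a difference of two elements of the linear subspace $\VHarm{B_j}$. In particular $\CoarseningOp{B}{\delta,L} u = u - e^{(0)} \in \VHarm{B_0} = \VHarm{B}$, and $\CoarseningOp{B}{\delta,L}$ is the asserted linear map $\VHarm{\inflateN{B}{\delta L}} \to \VHarm{B}$.

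\textbf{Error estimate.} Write $\mathcal{N}_{B'}(v) := \sum_{l=0}^{k} (\delta/\CSSCO)^l \seminorm[\Hk{l}{B'}]{v}$. Applying the error bound of \cref{Coarse_op_single} with box $B_j$, parameter $\delta$, and input $e^{(j+1)} \in \VHarm{\inflateN{B_j}{\delta}} = \VHarm{B_{j+1}}$ gives $\mathcal{N}_{B_j}(e^{(j)}) \leq \tfrac12 \mathcal{N}_{B_{j+1}}(e^{(j+1)})$ for $j = 0,\dots,L-1$; iterating from $j = L-1$ down to $j = 0$ yields $\mathcal{N}_{B}(e^{(0)}) \leq 2^{-L} \mathcal{N}_{B_L}(u)$. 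It remains to pass between $\mathcal{N}$ and $\norm[\Hk{k}{\cdot}]{\cdot}$. Since $\delta \cleq 1$ and $\CSSCO \geq 1$ is a fixed constant, one has $\min_{0 \leq l \leq k} (\delta/\CSSCO)^l = \min\{1,(\delta/\CSSCO)^k\} \cgeq \delta^k$, whence $\mathcal{N}_{B}(v) \cgeq \delta^k \norm[\Hk{k}{B}]{v}$; and each $(\delta/\CSSCO)^l$ is bounded by a constant, so $\mathcal{N}_{B_L}(v) \cleq \norm[\Hk{k}{B_L}]{v}$. Combining these with $u - \CoarseningOp{B}{\delta,L} u = e^{(0)}$ gives $\norm[\Hk{k}{B}]{u - \CoarseningOp{B}{\delta,L} u} \cleq \delta^{-k} 2^{-L} \norm[\Hk{k}{\inflateN{B}{\delta L}}]{u}$.

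\textbf{Rank estimate, and the main difficulty.} From $u - e^{(0)} = \sum_{j=0}^{L-1} R_j e^{(j+1)}$ one reads off $\ran{\CoarseningOp{B}{\delta,L}} \subseteq \sum_{j=0}^{L-1} \ran{R_j}$, hence $\rank{\CoarseningOp{B}{\delta,L}} \leq \sum_{j=0}^{L-1} \rank{R_j}$. By \cref{Coarse_op_single}, $\rank{R_j} \cleq (1 + \diam{B_j}/\delta)^d$, and $\diam{B_j} = \diam{\inflateN{B}{\delta j}} \leq \diam{B} + 2\sqrt{d}\,\delta j \leq \diam{B} + 2\sqrt{d}\,\delta L$, so (using $L \geq 1$) $\rank{R_j} \cleq (L + \diam{B}/\delta)^d$; summing over the $L$ values of $j$ gives $\rank{\CoarseningOp{B}{\delta,L}} \cleq (L + \diam{B}/\delta)^{d+1}$. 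The only genuinely delicate point is the telescoping in the error estimate: a naive triangle inequality over the per-step errors would produce a factor growing like $(3/2)^L$ instead of the required $2^{-L}$. The resolution is exactly the observation made in the construction — each residual $e^{(j+1)}$ still belongs to $\VHarm{B_{j+1}}$, because $R_j$ maps into $\VHarm{B_j}$ and the spaces $\VHarm{\cdot}$ are nested — so \cref{Coarse_op_single} applies to the residuals themselves and they contract by a factor $\tfrac12$ at every stage.
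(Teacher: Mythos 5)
Your proposal is correct and follows essentially the same route the paper takes (and attributes to \cite[Theorem 3.32]{Angleitner_H_matrices_FEM}): an $L$-fold iteration of the single-step operator on the nested boxes $\inflateN{B}{\delta j}$, with the key observation that the residuals remain in the nested spaces $\VHarm{\cdot}$ so that each step contracts by $1/2$, followed by the norm equivalence $\delta^k\norm[\Hk{k}{B}]{\cdot}\cleq\sum_{l=0}^{k}(\delta/\CSSCO)^l\seminorm[\Hk{l}{B}]{\cdot}\cleq\norm[\Hk{k}{B}]{\cdot}$ which produces the factor $\delta^{-k}$. The rank bound via summing the $L$ single-step ranks also matches the intended argument.
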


\begin{proof}
The construction is identical to the one in \cite[Theorem 3.32]{Angleitner_H_matrices_FEM}, and consists only of $k$-times iterative application of the single-step approximation result on nested boxes.
The additional factor $\delta^{-k}$ in the error bound stems from the norm equivalence $\delta^k \norm[\Hk{k}{B}]{\cdot} \cleq \sum_{l=0}^{k} (\delta/\CSSCO)^l \seminorm[\Hk{l}{B}]{\cdot} \cleq \norm[\Hk{k}{B}]{\cdot}$, which makes use of the relations $\delta^k \cleq \delta^l \cleq 1$.
\end{proof}

Looking at the right-hand side of the error bound in \cref{Mat_lvl_to_fct_lvl}, it would be more natural to look for an error bound in the local ''energy``-seminorm $\seminorm[a,B]{\cdot}$ from \cref{Native_space}, i.e., an upper bound for $\seminorm[a,B]{u-\CoarseningOp{B}{\delta,L} u}$ in terms of $\seminorm[a,\inflateN{B}{\delta L}]{u}$. In fact, if we \emph{could} prove $\seminorm[a,B]{u-\CoarseningOp{B}{\delta} u} \leq 2^{-1} \seminorm[a,\inflateN{B}{\delta}]{u}$ in \cref{Coarse_op_single}, then we would obtain $\seminorm[a,B]{u - \CoarseningOp{B}{\delta,L} u} \leq 2^{-L} \seminorm[a,\inflateN{B}{\delta L}]{u}$ without the extra factor $\delta^{-k}$. However, in the proof of \cref{Coarse_op_single} we ''had to`` use the estimate $\delta^k \seminorm[\Hk{k}{\R^d}]{\CutoffOp{B}{\delta/2} u} \cleq \sum_{l=0}^{k} \delta^l \seminorm[\Hk{l}{\inflateN{B}{\delta/2}}]{u}$ from \cref{Cut_off_op_Props}. Here, Leibniz' product rule introduced lower-order derivatives of $u$, irrespective of the degree $k_{\min} \in \set{0,\dots,k}$ and the coefficients $\sigma_l \geq 0$ from \cref{Native_space}. Since the right-hand side of the overall inequality now contains all $l \in \set{0,\dots,k}$ orders of derivatives, we might as well work with a \emph{full} $H^k$-norm (rather than $\seminorm[a,B]{\cdot}$) right from the beginning. It seems that this minor inconvenience cannot be avoided with our method of proof.

\subsection{Putting everything together} \label{SSec:Put_together}

We are finally in the position to prove our main result, \cref{Main_result}. In \cref{SSec:Mat_lvl_to_fct_lvl}, we reduced the original problem of approximating the inverse of the interpolation matrix from \cref{LSE_Matrices} to the problem of finding a low-dimensional subspace $V_{B,D,L} \subseteq V$ with certain approximation properties. Here, it was assumed that the boxes $B,D \subseteq \R^d$ satisfy some admissibility condition. As we shall prove next, the range of the multi-step coarsening operator from \cref{Coarse_op_multi} is a valid choice for $V_{B,D,L}$. 

\begin{theorem} \label{Space_VBDL}
Let $B,D \subseteq \R^d$ be two boxes with $B \cap \set{x_1,\dots,x_N} \neq \emptyset$, $D \cap \set{x_1,\dots,x_N} \neq \emptyset$ and $\hMin{} \leq \diam{B} \leq \CAdm \dist{B}{D}$. Furthermore, let $L \in \N$. Then, there exists a subspace
\begin{equation*}
V_{B,D,L} \subseteq V
\end{equation*}
with the following properties:
\begin{enumerate}
\item \emph{Dimension bound:} There holds the dimension bound
\begin{equation*}
\dimN{V_{B,D,L}} \leq C(d,k,\CAdm) L^{d+1}.
\end{equation*}

\item \emph{Approximation property:} For every $f \in V$ with $\supp{f} \subseteq D$, there holds the error bound
\begin{equation*}
\inf_{v \in V_{B,D,L}} \norm[\Hk{k}{B}]{S_N f - v} \leq C(d,k,\CAdm,(\sigma_l),(\xi_{\alpha})_{\alpha}) (L/\hMin{})^k 2^{-L} \seminorm[a]{f}.
\end{equation*}
\end{enumerate}
\end{theorem}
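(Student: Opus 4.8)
The plan is to realize $V_{B,D,L}$ as the range of the multi-step coarsening operator $\CoarseningOp{\tilde{B}}{\delta,L}$ from \cref{Coarse_op_multi}, applied with a suitable choice of the inner box and the inflation increment $\delta$. First I would fix $\delta := \dist[2]{B}{D}/L$, so that the $L$-fold inflation $\inflateN{\tilde{B}}{\delta L}$ of a box $\tilde{B} \supseteq B$ of side length comparable to $\diam[2]{B}$ still stays clear of $D$; concretely one takes $\tilde{B}$ to be the smallest axes-parallel box containing $B$ and checks, using the admissibility hypothesis $\diam[2]{B} \leq \CAdm \dist[2]{B}{D}$, that $\inflateN{\tilde{B}}{\delta L} \cap D = \emptyset$ and that $\delta \cleq 1$ (here the lower bound $\hMin{} \leq \diam[2]{B}$ and the balance assumption from \cref{Interpol_points} feed in). Then set $V_{B,D,L} := \ran{\CoarseningOp{\tilde{B}}{\delta,L}} \subseteq \VHarm{\tilde{B}} \subseteq V$.

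The dimension bound is immediate from the rank bound in \cref{Coarse_op_multi}(1): $\dimN{V_{B,D,L}} = \rank{\CoarseningOp{\tilde{B}}{\delta,L}} \leq C(d,k)(L + \diam[2]{\tilde{B}}/\delta)^{d+1}$, and since $\diam[2]{\tilde{B}} \ceq \diam[2]{B}$ and $\diam[2]{B}/\delta = L\,\diam[2]{B}/\dist[2]{B}{D} \leq \CAdm L$ by admissibility, this is $\leq C(d,k,\CAdm) L^{d+1}$.

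For the approximation property, let $f \in V$ with $\supp{f} \subseteq D$. Since $\inflateN{\tilde{B}}{\delta L} \cap D = \emptyset$ by construction, \cref{Space_VHarm_Props}(2) gives $S_N f \in \VHarm{\inflateN{\tilde{B}}{\delta L}}$, so $\CoarseningOp{\tilde{B}}{\delta,L}(S_N f)$ is a legitimate element of $V_{B,D,L}$. Then \cref{Coarse_op_multi}(2) yields
\begin{equation*}
\inf_{v \in V_{B,D,L}} \norm[\Hk{k}{B}]{S_N f - v} \leq \norm[\Hk{k}{\tilde{B}}]{S_N f - \CoarseningOp{\tilde{B}}{\delta,L}(S_N f)} \leq C(d,k)\,\delta^{-k}\,2^{-L}\,\norm[\Hk{k}{\inflateN{\tilde{B}}{\delta L}}]{S_N f},
\end{equation*}
using $B \subseteq \tilde{B}$. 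Now $\delta^{-k} = (L/\dist[2]{B}{D})^k \cleq (L/\hMin{})^k$ since $\hMin{} \cleq \diam[2]{B} \cleq \dist[2]{B}{D}$ (the last step by admissibility). It remains to bound the global $\Hk{k}{\inflateN{\tilde{B}}{\delta L}}$-norm of $S_N f$ by $\seminorm[a]{f}$. Here I would combine three facts: $\seminorm[a]{S_N f} \leq \seminorm[a]{f}$ from \cref{Interpol_problem_2_Solvable}; the seminorm equivalence $\seminorm[a]{\cdot} \ceq \seminorm[\Hk{k_{\min}}{\R^d}]{\cdot} + \seminorm[\Hk{k}{\R^d}]{\cdot}$ from \cref{Native_space_Props}(7), which controls all intermediate seminorms $\seminorm[\Hk{l}{\R^d}]{\cdot}$ for $k_{\min} \leq l \leq k$; and a Poincaré-type inequality (\cref{Poincare_inequality}) on a fixed bounded Lipschitz domain $\Omega_0$ — large enough to contain all interpolation points and the (uniformly bounded) box $\inflateN{\tilde{B}}{\delta L}$ — applied with $\iota_Z v := (v(\xi_\alpha))_{\abs{\alpha}<k_{\min}}$, which is unisolvent for $P$ by \cref{Interpol_points}; since $E_N (S_N f) = E_N f$ and the $\xi_\alpha$ are among the $x_n$, and since the solution vanishes where $f$ does not... more carefully, one uses $S_N f(\xi_\alpha) = f(\xi_\alpha)$ together with $\supp{f}\subseteq D$ and — if $D$ is disjoint from the $\xi_\alpha$ — concludes $\iota_Z(S_N f)=0$, giving $\norm[\Hk{k}{\Omega_0}]{S_N f} \cleq \seminorm[\Hk{k_{\min}}{\Omega_0}]{S_N f} + \cdots \cleq \seminorm[a]{S_N f} \leq \seminorm[a]{f}$; the general case absorbs the finitely many point values $f(\xi_\alpha)$ into a constant depending on $(\xi_\alpha)$. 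This is exactly where the constant picks up its dependence on $(\sigma_l)$ and $(\xi_\alpha)$, and where the boundedness assumption of \cref{Interpol_points} is essential so that $\Omega_0$ — hence the Poincaré constant — does not depend on $N$. The main obstacle is this last step: carefully choosing the fixed domain $\Omega_0$ and the functional $\iota_Z$ so that the Poincaré constant is $N$-independent while still dominating the global $\Hk{k}$-norm of $S_N f$ over the (uniformly bounded) inflated box; once that is in place the rest is bookkeeping.
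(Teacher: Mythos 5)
Your overall route is the same as the paper's: take $V_{B,D,L}$ to be the range of the multi-step coarsening operator on $B$, use admissibility to keep the $L$-fold inflated box away from $D$ so that $S_N f \in \VHarm{\inflateN{B}{\delta L}}$ by \cref{Space_VHarm_Props}, read off the dimension from the rank bound of \cref{Coarse_op_multi}, and convert the resulting $\Hk{k}{\inflateN{B}{\delta L}}$-norm of $S_N f$ into $\seminorm[a]{f}$ via a Poincar\'e inequality on a fixed, $N$-independent domain. One quantitative slip first: with $\delta := \dist{B}{D}/L$ the inflation overshoots, since inflating by $\delta$ in each coordinate moves points by up to $\sqrt{d}\,\delta$ in Euclidean distance, so one only gets $\dist{\inflateN{B}{\delta L}}{D} \geq (1-\sqrt{d})\dist{B}{D}$, which is useless for $d \geq 2$. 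You need a safety factor, e.g.\ $\delta := \diam{B}/(2\sqrt{d}\,\CAdm L)$, which still yields $\diam{B}/\delta \cleq L$ for the rank bound and $\delta \cgeq \hMin{}/L$ for the error bound.

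The genuine gap is in your final Poincar\'e step. Applying \cref{Poincare_inequality} directly to $S_N f$ with $\iota_Z v := (v(\xi_\alpha))_{\abs{\alpha}<k_{\min}}$ produces the term $\norm[Z]{\iota_Z S_N f} = \norm[Z]{(f(\xi_\alpha))_{\alpha}}$, and when some $\xi_\alpha$ lies in $D$ these values are \emph{not} controllable by $\seminorm[a]{f}$: adding any $p \in P$ to $f$ leaves $\seminorm[a]{f}$ unchanged but changes the $f(\xi_\alpha)$ arbitrarily, so they cannot be ``absorbed into a constant'' independent of $f$. The correct move is to apply the Poincar\'e inequality to $g := f - S_N f$ instead. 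This function vanishes at \emph{all} interpolation points by the interpolation condition, hence $\iota_Z g = \mvemph{0}$ unconditionally; and since $\supp{f} \subseteq D$ is disjoint from $\inflateN{B}{\delta L}$, one has $S_N f = -(f-S_N f)$ on that box, so
\begin{equation*}
\norm[\Hk{k}{\inflateN{B}{\delta L}}]{S_N f} = \norm[\Hk{k}{\inflateN{B}{\delta L}}]{g} \leq \norm[\Hk{k}{\Omega}]{g} \cleq \seminorm[a,\Omega]{g} \leq \seminorm[a]{f} + \seminorm[a]{S_N f} \leq 2\seminorm[a]{f},
\end{equation*}
with $\Omega$ the fixed ball you describe. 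With these two corrections your argument coincides with the paper's proof.
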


\begin{proof}
Let $B,D \subseteq \R^d$ and $L \in \N$ as above. Set $\delta := \diam{B}/(2\sqrt{d}\CAdm L) > 0$ and denote by $\fDef{\CoarseningOp{B}{\delta,L}}{\VHarm{\inflateN{B}{\delta L}}}{\VHarm{B}}$ the multi-step coarsening operator from \cref{Coarse_op_multi}. We choose the space
\begin{equation*}
V_{B,D,L} := \ran{\CoarseningOp{B}{\delta,L}} \subseteq \VHarm{B} \subseteq V.
\end{equation*}

Using \cref{Coarse_op_multi} and the definition of $\delta$, we can bound the dimension as follows:
\begin{equation*}
\dimN{V_{B,D,L}} = \rank{\CoarseningOp{B}{\delta,L}} \leq C(d,k) (L+\diam{B}/\delta)^{d+1} \leq C(d,k,\CAdm) L^{d+1}.
\end{equation*}

Finally, let $f \in V$ with $\supp{f} \subseteq D$. In order to show that the error bound from \cref{Coarse_op_multi} is applicable to $S_N f \in V$, we first need to establish the fact that $S_N f \in \VHarm{\inflateN{B}{\delta L}}$. According to \cref{Space_VHarm_Props}, it suffices to prove that the sets $\inflateN{B}{\delta L}$ and $D$ are disjoint. To that end, we choose a point $z \in \closureN{\inflateN{B}{\delta L}}$ with $\dist{\inflateN{B}{\delta L}}{D} = \dist{z}{D}$. Then, $\dist{B}{D} \leq \dist{B}{z} + \dist{z}{D} \leq \sqrt{d} \delta L + \dist{\inflateN{B}{\delta L}}{D}$. Combined with the definition of $\delta$ and the admissibility condition, this yields
\begin{equation*}
\dist{\inflateN{B}{\delta L}}{D} \geq \dist{B}{D} - \sqrt{d} \delta L = \dist{B}{D} - \diam{B}/(2\CAdm) \geq \diam{B}/(2\CAdm) \geq \hMin{} > 0.
\end{equation*}

Now \cref{Space_VHarm_Props} implies $S_N f \in \VHarm{\inflateN{B}{\delta L}}$, so that $\CoarseningOp{B}{\delta,L}(S_N f) \in V_{B,D,L}$. Hence, the error bound from \cref{Coarse_op_multi} is applicable to the function $S_N f$. Then, exploiting $\supp{f} \subseteq D$ and $\delta \cgeq \diam{B}/L \geq \hMin{}/L$, we can estimate
\begin{equation*}
\inf_{v \in V_{B,D,L}} \norm[\Hk{k}{B}]{S_N f - v} \leq \norm[\Hk{k}{B}]{S_N f - \CoarseningOp{B}{\delta,L}(S_N f)} \cleq \delta^{-k} 2^{-L} \norm[\Hk{k}{\inflateN{B}{\delta L}}]{S_N f} \cleq (L/\hMin{})^k 2^{-L} \norm[\Hk{k}{\inflateN{B}{\delta L}}]{f - S_N f}.
\end{equation*}

To bound the remaining local $H^k$-norm, we pick a suitable superset $\Omega \supseteq \inflateN{B}{\delta L}$ and use the Poincar\'e-type inequality from \cref{Poincare_inequality}. To that end, recall from \cref{Interpol_points} that there exists a constant $C>0$, such that $\norm{x_n} \leq C$ for \emph{all} values of $N \geq N_{\min}$ and $n \in \set{1,\dots,N}$. Exploiting the admissibility condition and the fact that both $B$ and $D$ contain at least one of the interpolation points $x_1,\dots,x_N$, it is not difficult to see that $\diam{B} \leq 2\CAdm C$. We obtain the bound
\begin{equation*}
\sup_{x \in \inflateN{B}{\delta L}} \norm{x} \leq \sup_{x \in B} \norm{x} + \sqrt{d}\delta L \leq \max_{n \in \set{1,\dots,N}} \norm{x_n} + \diam{B} + \sqrt{d}\delta L \leq 2(1+\CAdm)C,
\end{equation*}
which tells us that $\inflateN{B}{\delta L} \subseteq \Ball{0}{2(1+\CAdm)C} =: \Omega$. Note that $\Omega$ is independent of $N$ and contains the unisolvent points $\Set{\xi_{\alpha}}{\abs{\alpha}<k_{\min}} \subseteq \R^d$ from \cref{Interpol_points}, which are independent of $N$ as well. Similar to the proof of \cref{Hom_native_space_Props}, we intend to apply the Poincar\'e-type inequality from \cref{Poincare_inequality} to the bounded Lipschitz domain $\Omega$, the normed vector space $Z = \lp{2}{\Set{\alpha}{\abs{\alpha} < k_{\min}}}$ and the operator $\fDef{\iota_Z}{\Hk{k}{\Omega}}{Z}$, $\iota_Z g := (g(\xi_{\alpha}))_{\abs{\alpha} < k_{\min}}$, which is a relative of the evaluation operator $E_N$ from \cref{Eval_op}. The continuity of $\iota_Z$ follows from the Sobolev embedding $\Hk{k}{\Omega} \subseteq \Ck{0}{\Omega}$ and the implication $(\iota_Z p = \mvemph{0} \Rightarrow p=0)$ for all $p \in \Pp{k_{\min}-1}{\Omega}$ can be argued by unisolvency. Then, by \cref{Poincare_inequality} and \cref{Native_space}, we have the following bound:
\begin{equation*}
\forall g \in \Hk{k}{\Omega} \,\, \text{with} \,\, \iota_Z g = \mvemph{0}: \quad \quad \norm[\Hk{k}{\Omega}]{g} \leq C(d,k,\Omega,(\xi_{\alpha})_{\alpha}) \seminorm[\Hk{k}{\Omega}]{g} \leq C(d,k,\Omega,(\xi_{\alpha})_{\alpha},\sigma_k) \seminorm[a,\Omega]{g}.
\end{equation*}

Now, according to \cref{Solution_op}, the function $f-S_N f$ vanishes on \emph{all} interpolation points $\set{x_1,\dots,x_N}$. Owing to \cref{Interpol_points}, we know that $\Set{\xi_{\alpha}}{\abs{\alpha}<k_{\min}} \subseteq \set{x_1,\dots,x_N}$, which implies $\iota_Z(\restrict{f-S_N f}{\Omega}) = \mvemph{0}$. Therefore, we may apply the aforementioned Poincar\'e inequality to the function $\restrict{f-S_N f}{\Omega}$. Finally, using the a priori estimate $\seminorm[a]{S_N f} \leq \seminorm[a]{f}$ from \cref{Interpol_problem_2_Solvable}, we get the following overall bound:
\begin{equation*}
\inf_{v \in V_{B,D,L}} \norm[\Hk{k}{B}]{S_N f - v} \cleq (L/\hMin{})^k 2^{-L} \norm[\Hk{k}{\Omega}]{f - S_N f} \cleq (L/\hMin{})^k 2^{-L} \seminorm[a,\Omega]{f - S_N f} \cleq (L/\hMin{})^k 2^{-L} \seminorm[a]{f}.
\end{equation*}
This concludes the proof.
\end{proof}

Finally, we have everything we need for the proof of \cref{Main_result}.

\begin{proof}
Let $\mvemph{S_{11}} \in \R^{N \times N}$ be the matrix from \cref{Main_result} and $r \in \N$ a given block rank bound. We define the asserted $\mathcal{H}$-matrix approximant $\mvemph{M} \in \R^{N \times N}$ in a block-wise fashion:

First, consider an admissible block $(I,J) \in \BPartAdm$. From \cref{Block_partition} we know that there exist boxes $B_I,B_J \subseteq \R^d$ with $\Bubbles{I} \subseteq B_I$, $\Bubbles{J} \subseteq B_J$ and $\diam{B_I} \leq \CAdm \dist{B_I}{B_J}$. In particular, $B_I$ and $B_J$ both contain at least one interpolation point and there holds $\diam{B_I} \geq \diam{\Bubbles{I}} \geq \hMin{}$, by \cref{Bubbles}. This means that \cref{Space_VBDL} is applicable to $B_I$ and $B_J$. Now, denote by $C>0$ the constant from the dimension bound in \cref{Space_VBDL}. We set $\CExp := 1/(2C^{1/(d+1)}) > 0$ and $L := \lfloor (r/C)^{1/(d+1)} \rfloor \in \N$. Then, \cref{Space_VBDL} provides a subspace $V_{I,J,r} \subseteq V$. We apply \cref{Mat_lvl_to_fct_lvl} to this subspace and get an integer $\tilde{r} \leq \dimN{V_{I,J,r}}$ and matrices $\mvemph{X}_{I,J,r} \in \R^{I \times \tilde{r}}$ and $\mvemph{Y}_{I,J,r} \in \R^{J \times \tilde{r}}$. We set
\begin{equation*}
\restrictN{\mvemph{M}}{I \times J} := \mvemph{X}_{I,J,r} (\mvemph{Y}_{I,J,r})^T.
\end{equation*}

Second, for every small block $(I,J) \in \BPartSmall$, we make the trivial choice
\begin{equation*}
\restrictN{\mvemph{M}}{I \times J} := \restrictN{\mvemph{S_{11}}}{I \times J}.
\end{equation*}

By \cref{H_matrices}, we have $\mvemph{M} \in \HMatrices{\BPart}{\tilde{r}}$ with a block rank bound
\begin{equation*}
\tilde{r} \leq \dimN{V_{I,J,r}} \stackrel{\text{Def.} \, C}{\leq} C L^{d+1} \stackrel{\text{Def.} \, L}{\leq} r.
\end{equation*}

For the error we get
\begin{eqnarray*}
\norm[2]{\mvemph{S_{11}} - \mvemph{M}} &\stackrel{\cref{Block_partition}}{\leq}& \ln(N) \max_{(I,J) \in \BPartAdm} \norm[2]{\restrictN{\mvemph{S_{11}}}{I \times J} - \mvemph{X}_{I,J,r} (\mvemph{Y}_{I,J,r})^T} \\
&\stackrel{\cref{Mat_lvl_to_fct_lvl}}{\cleq}& \ln(N) \hMin{}^{d-2k} \max_{(I,J) \in \BPartAdm} \sup_{\substack{f \in V: \\ \supp{f} \subseteq B_J}} \inf_{w \in V_{I,J,r}} \frac{\seminorm[a,B_I]{S_N f - w}}{\seminorm[a]{f}} \\
&\stackrel{\cref{Space_VBDL}}{\cleq}& \ln(N) \hMin{}^{d-3k} L^k 2^{-L} \\
&\stackrel{\text{Def.} \, L}{\cleq}& \ln(N) \hMin{}^{d-3k} r^{k/(d+1)} \exp(-\ln(2)(r/C)^{1/(d+1)}) \\
&\stackrel{\text{Def.} \, \CExp}{\cleq}& \ln(N) \hMin{}^{d-3k} \exp(-\CExp r^{1/(d+1)}) \\
&\stackrel{\cref{Interpol_points}}{\cleq}& \ln(N) N^{\CCard(3k-d)/d} \exp(-\CExp r^{1/(d+1)}).
\end{eqnarray*}
\end{proof}

\subsection{The fundamental solution (proofs)} \label{SSec:Fundamental_solutions_proofs}

In this subsection we provide the proofs of \cref{Fundamental_solution_Ex_1} and \cref{Fundamental_solution_Ex_2}.

\begin{proof}[Proof of \cref{Fundamental_solution_Ex_1}]
Recall that $d \geq 1$, $k \in \N$ with $k>d/2$, $b \in (0,\infty)$ and that $\phi$ was given as an integral expression:
\begin{equation*}
\forall x \in \R^d: \quad \quad \phi(x) := \frac{(4\pi)^{-d/2}}{\Gamma(k)} \I[\infty]{0}{t^{k-d/2-1} e^{-b^2 t} e^{-\norm{x}^2/(4t)}}{t}.
\end{equation*}

The trivial bound $e^{-\norm{x}^2/(4t)} \leq 1$, the substitution $t = s/b^2$ and the assumption $k>d/2$ show that the integral is indeed well-defined with a uniform upper bound $\abs{\phi(x)} \leq C(d,k) \Gamma(k-d/2) b^{d-2k}$. In particular, Lebesgue's Dominated Convergence Theorem tells us that $\phi \in \Ck{0}{\R^d}$ as well.

In order to show that $\phi$ is a fundamental solution of $\DN{2k}{} = (b^2-\LaplaceN{})^k$, we use standard Fourier techniques. First, using Fubini's Theorem, a straightforward computation reveals $\phi \in \Lp{1}{\R^d}$.
To compute the Fourier transform $\hat{\phi}$ of $\phi$, recall that the Gau\ss kernel $e^{-\norm{\cdot}^2/2}$ is a fixpoint of the Fourier transform and that there holds the relation $\Fourier{e^{-\norm{\cdot}^2/(4t)}}(y) = (2t)^{d/2} e^{-t\norm{y}^2}$ for all $t>0$ and $y \in \R^d$. Using the substitution $t=s/(b^2+\norm{y}^2)$, we obtain the following expression:
\begin{equation*}
\FourierHatN{\phi}(y) = \frac{(4\pi)^{-d/2}}{\Gamma(k)}\I[\infty]{0}{t^{k-d/2-1} e^{-b^2 t} \Fourier{e^{-\norm{\cdot}^2/(4t)}}(y)}{t} = \frac{(2\pi)^{-d/2}}{\Gamma(k)} \I[\infty]{0}{t^{k-1} e^{-(b^2+\norm{y}^2)t}}{t} = \frac{(2\pi)^{-d/2}}{(b^2+\norm{y}^2)^k}.
\end{equation*}

Now, for all $x_0 \in \R^d$ and $v \in \CkO{\infty}{\R^d}$, we have $\FourierInv{\DN{2k}{v}}(y) = \FourierInv{(b^2-\LaplaceN{})^k v}(y) = (b^2+\norm{y}^2)^k \FourierInvHatN{v}(y)$, where $\FourierInvN{}$ and $\FourierInvHatN{\cdot}$ denote the inverse Fourier transform. We know that $(b^2+\norm{\cdot}^2)^k \FourierInvHatN{v} \in \Schwartz \subseteq \Lp{1}{\R^d}$, where $\Schwartz \subseteq \Ck{\infty}{\R^d}$ is the usual \emph{Schwartz class} of rapidly decreasing functions. Then, using the unit imaginary number $\complexI \in \C$, the usual Fourier computation rules and the duality formula $\I{\R^d}{u\FourierHatN{v}}{x} = \I{\R^d}{\FourierHatN{u}v}{x}$ for $u,v \in \Lp{1}{\R^d}$, we obtain
\begin{equation*}
\begin{array}{rclcl}
\I{\R^d}{\phi(x-x_0)(\DN{2k}{v})(x)}{x} &=& \I{\R^d}{\phi(x-x_0)\Fourier{(b^2+\norm{\cdot}^2)^k \FourierInvHatN{v}}(x)}{x} &=& \I{\R^d}{\FourierHatN{\phi}(y) e^{-\complexI\skalar{x_0}{y}} (b^2+\norm{y}^2)^k \FourierInvHatN{v}(y)}{y} \\
&=& (2\pi)^{-d/2} \I{\R^d}{\FourierInvHatN{v}(y) e^{-\complexI\skalar{x_0}{y}}}{y} &=& v(x_0).
\end{array}
\end{equation*}

Finally, let us prove the asserted conformity. Since $k_{\min}=0$, we have $V = \Hk{k}{\R^d} = \Set{v \in \Lp{2}{\R^d}}{(1+\norm{\cdot}^2)^{k/2} \FourierHatN{v} \in \Lp{2}{\R^d}}$, a well-known characterization of Sobolev spaces (e.g., \cite[Section 5.8.4.]{Evans_PDEs}). In fact, the function $\phi$ itself lies in $V$, because $(1+\norm{\cdot}^2)^{k/2} \FourierHatN{\phi} \ceq (1+\norm{\cdot}^k)^{-1} \in \Lp{2}{\R^d}$. This directly implies $\sum_{n=1}^{N} \mvemph{c}_n \phi(\cdot-x_n) \in V$ for all $\mvemph{c} \in \R^N$, which concludes the proof.
\end{proof}

\begin{proof}[Proof of \cref{Fundamental_solution_Ex_2}]
The continuity of the thin-plate spline $\phi$ is obvious and the fact that $\phi$ is a fundamental solution of $(-\LaplaceN{})^k$ was shown in \cite[Theorem 10.36]{Wendland_Scattered}. It remains to show that the function $v := \sum_{n=1}^{N} \mvemph{c}_n \phi(\cdot-x_n)$ lies in the native space $V$, whenever $\mvemph{c} \in \mvemph{C}$. To do so, we use Fourier techniques: According to \cite[Page 161]{Wendland_Scattered}, the function $\phi$ has a \emph{generalized Fourier transform}, given by $\FourierHatN{\phi}(y) := (2\pi)^{-d/2} \norm{y}^{-2k}$. This means that $\I{\R^d}{\phi\FourierHatN{w}}{x} = \I{\R^d}{\FourierHatN{\phi}w}{y}$ for all \emph{homogeneous Schwartz functions} $w \in \SchwartzO{2k}$, where $\SchwartzO{2k} := \Set{w \in \Schwartz}{\forall \abs{\beta}<2k: (\DN{\beta}{w})(0) = 0}$.

Now, consider the auxiliary function $c := \sum_{n=1}^{N} \mvemph{c}_n e^{-\complexI\skalar{x_n}{\cdot}} \in \Ck{\infty}{\R^d}$. For all $\abs{\beta} < k$, there holds $(\DN{\beta}{c})(0) = \sum_{n=1}^{N} \mvemph{c}_n (-\complexI x_n)^{\beta} = (-\complexI)^{\beta} \skalar[2]{\mvemph{c}}{E_N(\cdot)^{\beta}} = 0$, because $\mvemph{c} \in \mvemph{C}$ and $(\cdot)^{\beta} \in \Pp{k-1}{\R^d} = P$. Using Taylor's Theorem, it follows that $\abs{c(y)} \cleq \min\set{\norm{y}^k,1}$ for all $y \in \R^d$. Then, for all $\abs{\alpha}=k$, it is not difficult to see that $(\complexI\,\cdot)^{\alpha} c \FourierHatN{\phi} \in \Lp{2}{\R^d}$, so that $\FourierInv{(\complexI\,\cdot)^{\alpha} c \FourierHatN{\phi}} \in \Lp{2}{\R^d}$ is well-defined (and also real-valued). Furthermore, for all $w \in \CkO{\infty}{\R^d}$, we have $(-\complexI\,\cdot)^{\alpha} c \FourierInvHatN{w} \in \SchwartzO{2k}$. We use the standard Fourier computation rules to derive the following identity:
\begin{equation*}
\I{\R^d}{v(\DN{\alpha}{w})}{x} = \I{\R^d}{\phi\Fourier{(-\complexI\,\cdot)^{\alpha} c \FourierInvHatN{w}}}{x} = \I{\R^d}{\FourierHatN{\phi} (-\complexI\,\cdot)^{\alpha} c \FourierInvHatN{w}}{y} = (-1)^{\alpha} \I{\R^d}{\FourierInv{(\complexI\,\cdot)^{\alpha} c \FourierHatN{\phi}} w}{y}.
\end{equation*}

This proves that the function $v$ has an $\alpha$-th derivative, given by $\DN{\alpha}{v} = \FourierInv{(\complexI\,\cdot)^{\alpha} c \FourierHatN{\phi}} \in \Lp{2}{\R^d}$. Since this is true for all $\abs{\alpha}=k$, we conclude that $v \in \Set{u \in \LpLoc{1}{\R^d}}{\forall \abs{\alpha}=k: \exists \DN{\alpha}{u} \in \Lp{2}{\R^d}} = V$.
\end{proof}

\section{Numerical examples} \label{Sec:Numerical_examples}

We finish this paper with a few numerical examples that were performed in \texttt{MATLAB} and \texttt{H2Lib}, \cite{H2Lib}.

\begin{figure}[H]
\begin{center}
\includegraphics[width=0.28\textwidth, trim=0cm 0cm 0cm 0cm]{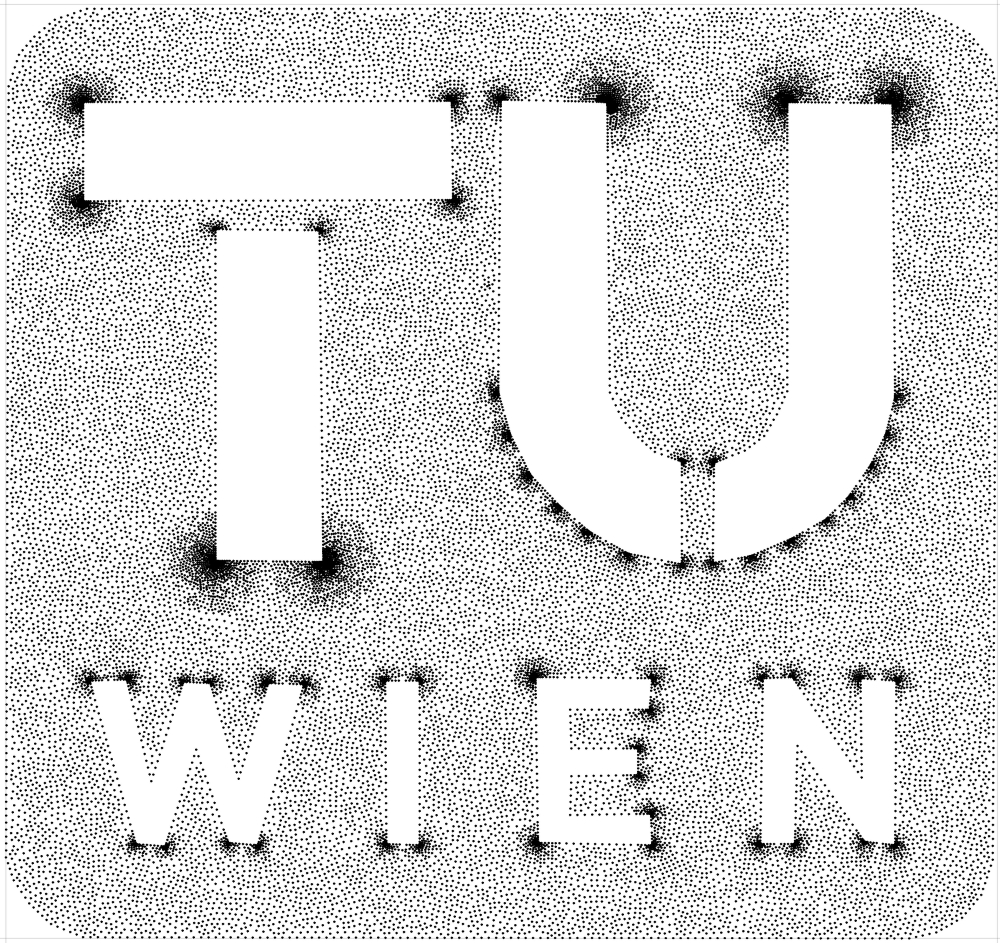} \quad \quad
\includegraphics[width=0.3\textwidth, trim=0cm 0cm 0cm 0cm]{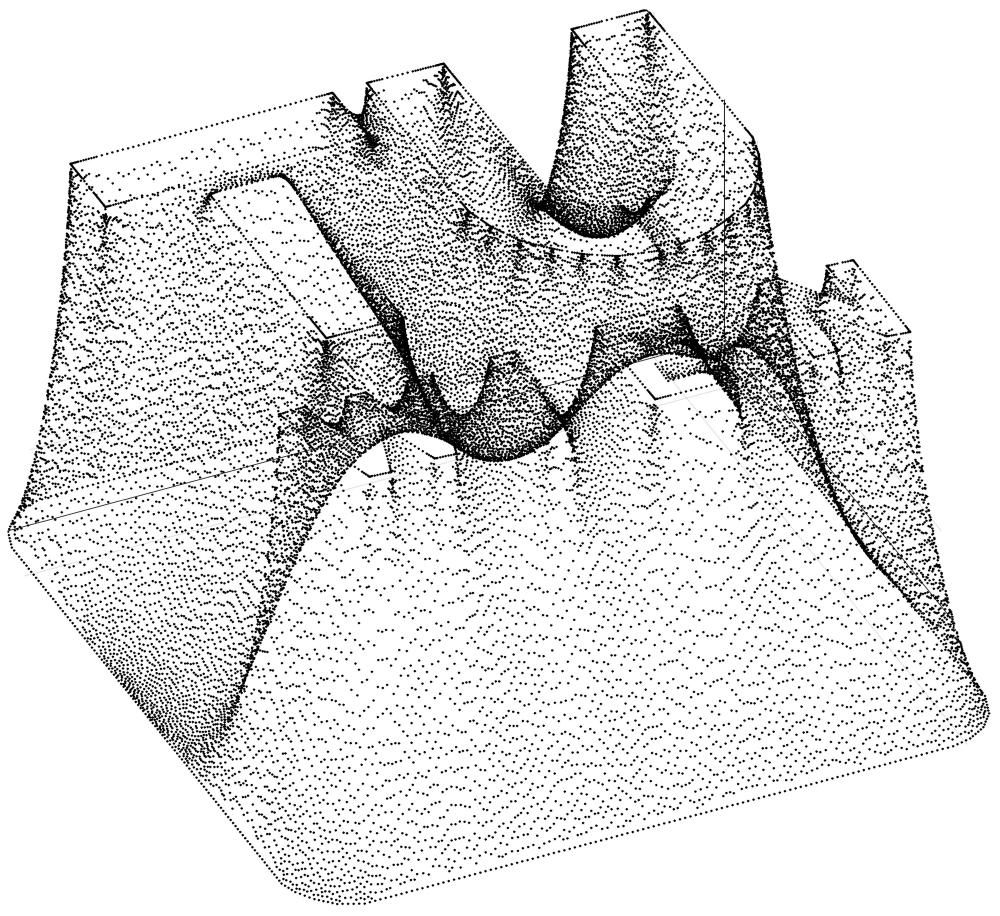} \quad
\includegraphics[width=0.3\textwidth, trim=0cm 0cm 0cm 0cm]{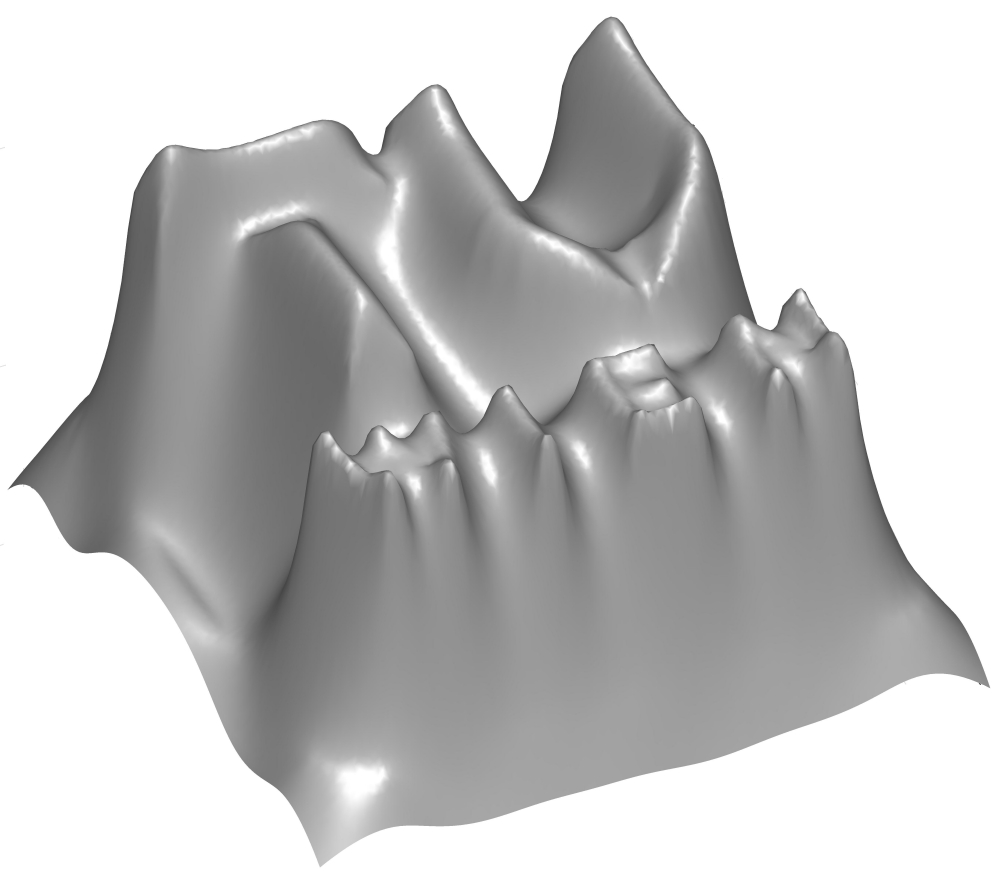}
\caption{Interpolation of smooth data on a non-uniform point distribution.}
\label{Figure_TU_Logo}
\end{center}
\end{figure}

In \cref{Figure_TU_Logo}, we used roughly $N \approx 30.000$ interpolation points in $d=2$ space dimensions. The domain of interest was the exterior of the \emph{TU Wien} logo, i.e., the complement of the letters in the unit square $[0,1] \times [0,1] \subseteq \R^2$. The interpolation points were uniform in the open and algebraically graded towards reentrant corners with a grading exponent $\CCard = 2$ (cf. \cref{Interpol_points}). As for the basis function, we used the thin-plate spline $\phi(x) = \norm{x}^2 \ln\norm{x}$ from \cref{Fundamental_solution_Ex_2}, i.e., $k=k_{\min}=2$. The left image shows the positions $x_n$ of the interpolation points and the one in the middle depicts the pairs $(x_n,f(x_n))$, where the data function $f \in V$ is a smooth indicator function of the letters. On the right-hand side, the solution $u \in V$ of the interpolation problem, \cref{Interpol_problem_1}, is rendered.

\begin{figure}[H]
\begin{center}
\includegraphics[width=0.3\textwidth, trim=0cm 0cm 0cm 0cm]{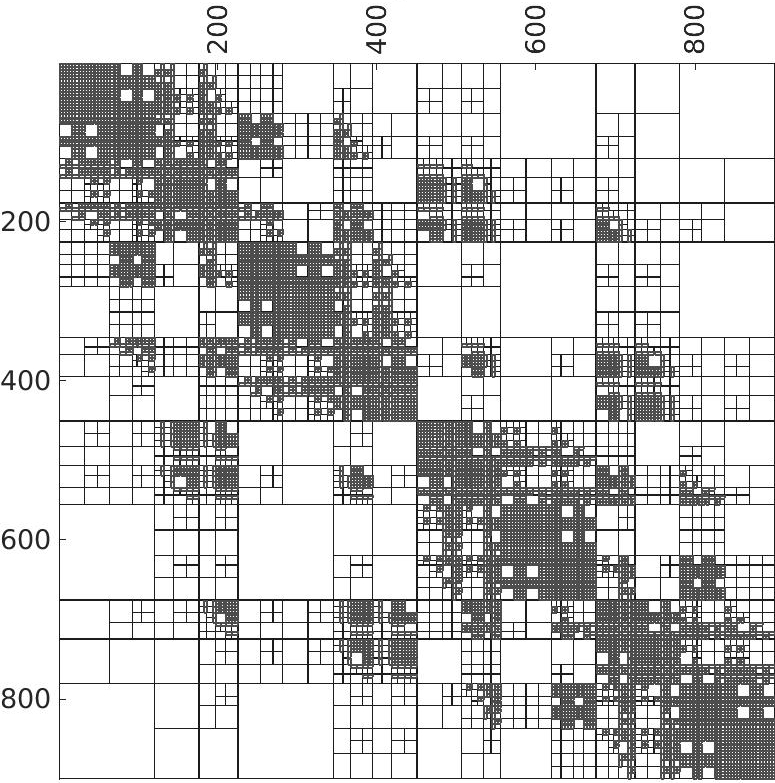} \quad \quad
\includegraphics[width=0.55\textwidth, trim=0cm 0cm 0cm 0cm]{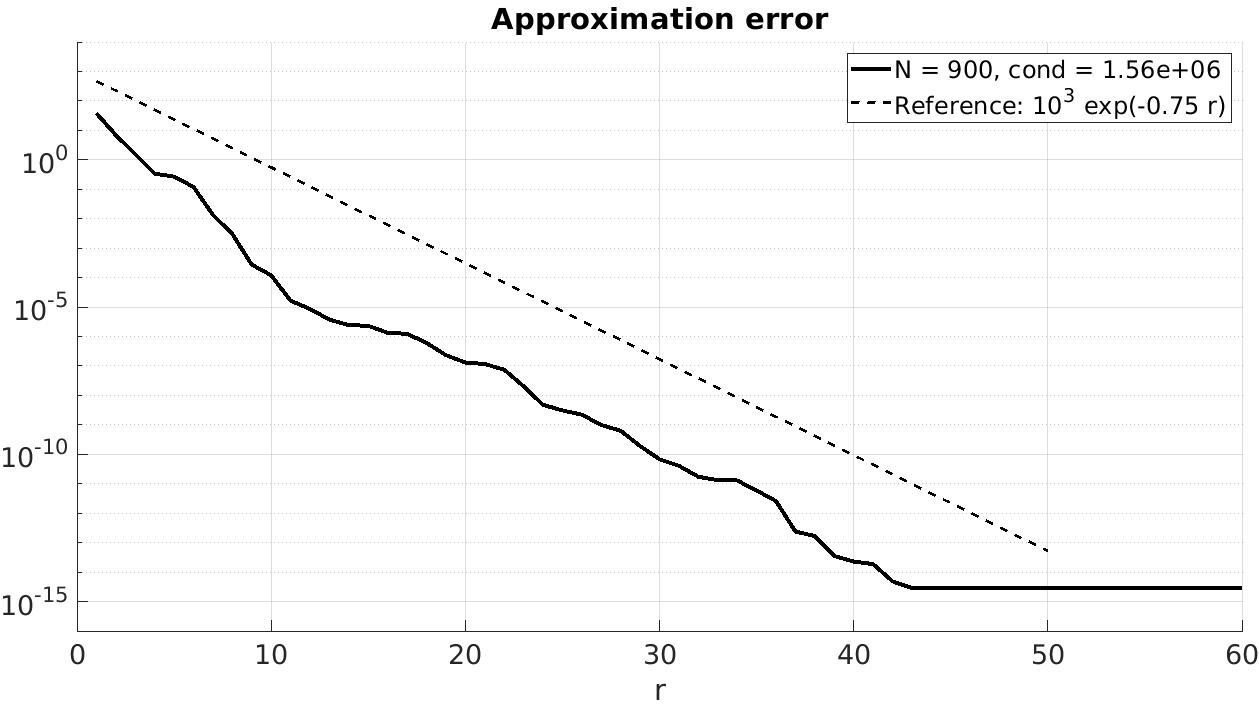}
\caption{A ''typical`` hierarchical block partition and a ''typical`` error plot in $2$D.}
\label{Figure_TPS_2D}
\end{center}
\end{figure}

\cref{Figure_TPS_2D} shows the results of a problem in space dimension $d=2$. The $N=900$ interpolation points $x_n$ produced a uniform $30 \times 30$ grid in the unit square $[0,1] \times [0,1] \subseteq \R^2$, i.e., $\CCard = 1$. Once again the thin plate-spline $\phi(x) = \norm{x}^2 \ln\norm{x}$ with $k=k_{\min}=2$ was employed. In the left image, we can see a ''typical`` sparse hierarchical block partition $\BPart$ in the sense of \cref{Block_partition}. The somewhat ''fractal`` pattern of \emph{small} and \emph{admissible} cluster blocks arises from the fact that we ordered the interpolation points in a row-wise fashion, i.e., $x_1 = (0,0/29)$, $x_{31} = (0,1/29)$, $x_{61} = (0,2/29)$, et cetera.

The right-hand image is empirical evidence that the error bound in our main theorem, \cref{Main_result}, is correct. To generate this plot, the main block $\mvemph{S_{11}} \in \R^{N \times N}$ of the inverse $(\begin{smallmatrix} \mvemph{A} & \mvemph{B}^T \\ \mvemph{B} & \mvemph{0} \end{smallmatrix})^{-1}$ was computed exactly using \texttt{MATLAB}'s built-in inversion routine $\texttt{inv(\dots)}$. Next, for each admissible block $\restrictN{\mvemph{S_{11}}}{I \times J}$, $(I,J) \in \BPartAdm$, we used $\texttt{svds(\dots)}$ to compute the corresponding singular values $\sigma_r(\restrictN{\mvemph{S_{11}}}{I \times J})$, $r \in \set{1,2,\dots,60}$. Truncating the blockwise SVDs at any given rank $r \in \N$, we then assembled the $\mathcal{H}$-matrix $\mvemph{M}_r \in \HMatrices{\BPart}{r}$ (cf. \cref{H_matrices}), the best rank-$r$-approximation to $\mvemph{S_{11}}$. As discussed in our previous work \cite[Section 4]{Angleitner_H_matrices_FEM}, this approach leads to the \emph{computable} error bound
\begin{equation*}
\norm[2]{\mvemph{S_{11}}-\mvemph{M}_r} \cleq \depth{\Tree{N}} \max_{(I,J) \in \BPart} \sigma_{r+1}(\restrictN{\mvemph{S_{11}}}{I \times J}),
\end{equation*}
where $\depth{\Tree{N}}$ denotes the cluster tree depth previously mentioned in \cref{SSec:Hierarchical_matrices}. The semi-logarithmic error plot depicts the computable error bound along with a dashed reference line. The apparent similarity suggests a relation of the form $\norm[2]{\mvemph{S_{11}}-\mvemph{M}_r} \cleq C(N) \exp(-\CExp r)$, which is even better than our theoretical prediction $C(N) \exp(-\CExp r^{1/3})$.

On a side note, we mention that the standard $16$-digit precision arithmetic in \texttt{MATLAB} was not enough to generate a conclusive error plot. As is well-established in the literature (see, e.g., \cite[Chapter 12]{Wendland_Scattered}), the condition number of the interpolation matrix $(\begin{smallmatrix} \mvemph{A} & \mvemph{B}^T \\ \mvemph{B} & \mvemph{0} \end{smallmatrix})$ scales very poorly with respect to the separation distance $\hMin{}$ introduced in \cref{Interpol_points}. To overcome this fundamental problem, we used \texttt{MATLAB}'s \emph{variable-precision arithmetic} \texttt{vpa(\dots)} with $32$ digits. This brute-force approach allowed us to carry out the explicit matrix inversion with sufficient accuracy. 

\begin{figure}[H]
\begin{center}
\includegraphics[width=0.3\textwidth, trim=0cm 0cm 0cm 0cm]{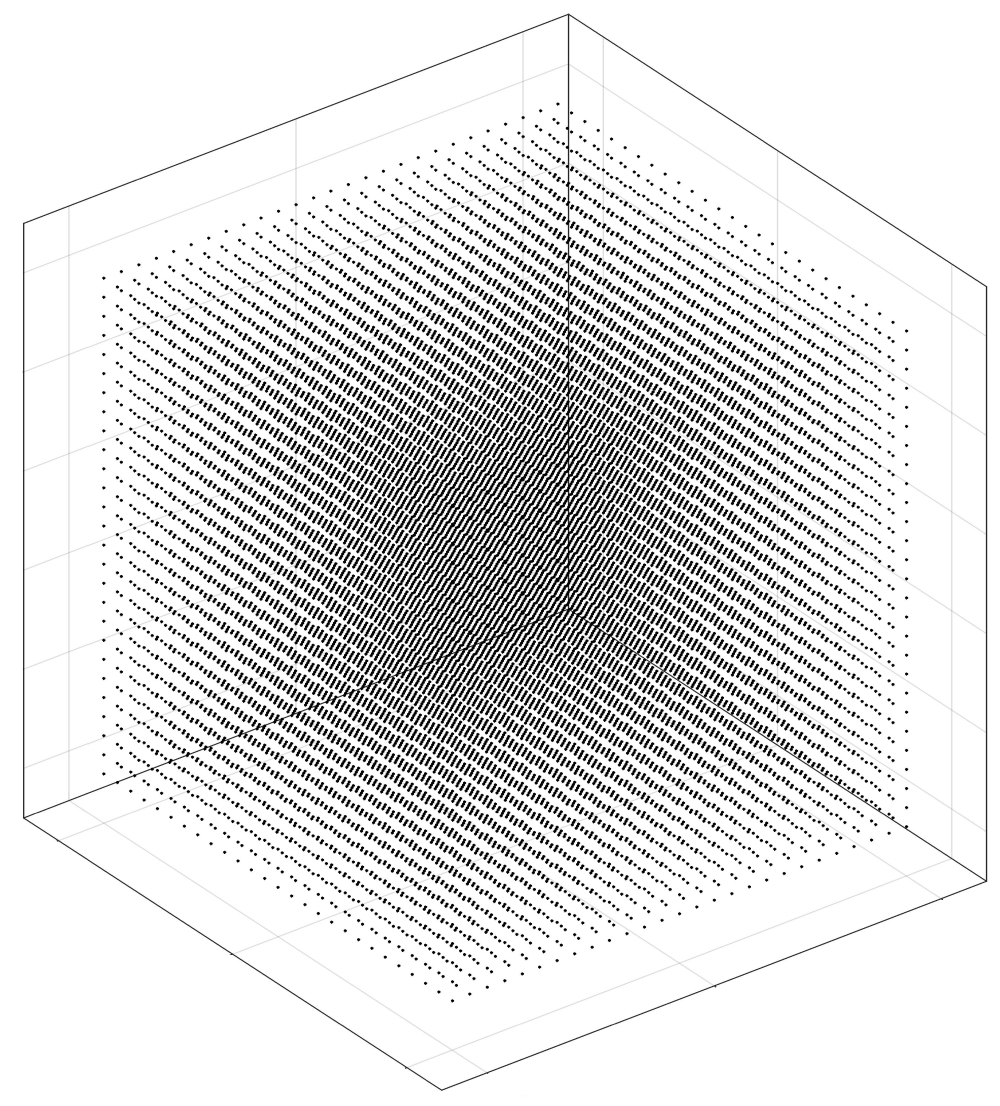} \quad \quad
\includegraphics[width=0.55\textwidth, trim=0cm 0cm 0cm 0cm]{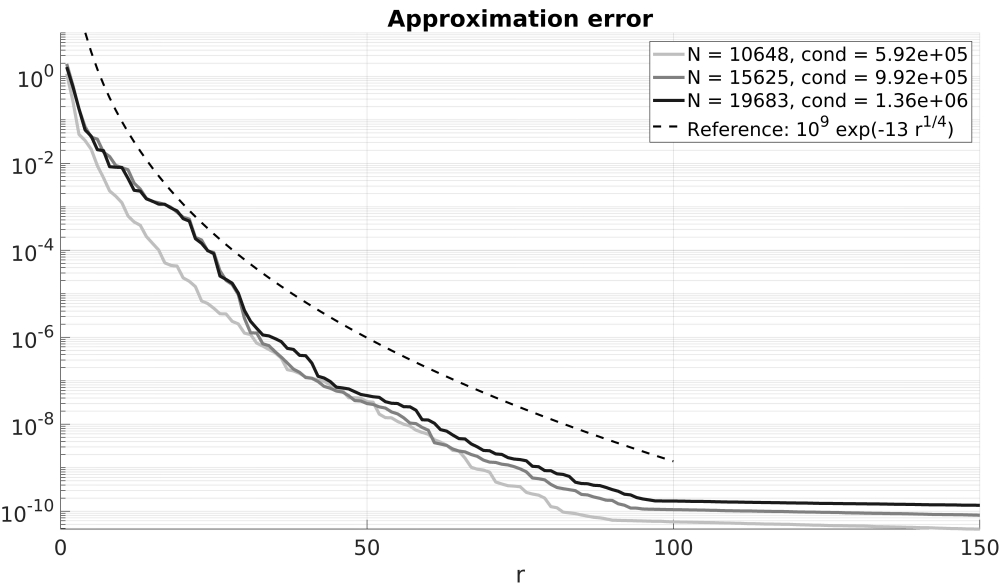}
\caption{A comparison of different problem sizes $N$ for a uniform 3D grid.}
\label{Figure_Exp_3D_uniform}
\end{center}
\end{figure}

The next example, \cref{Figure_Exp_3D_uniform}, covers the case $d=3$ and a uniform point distribution in the unit cube $[0,1] \times [0,1] \times [0,1] \subseteq \R^3$, visualized in the left image. This time, we set $k=2$ and $k_{\min}=0$ and used the Bessel potential $\phi(x) = e^{-\norm{x}}$ from \cref{Fundamental_solution_Ex_1} as the basis function. The error plot shows a comparison between $N \approx 10.000$, $N \approx 15.000$ and $N \approx 20.000$ interpolation points, as well as a reference curve of the form $r \mapsto C \exp(-\CExp r^{1/4})$. In accordance with \cref{Main_result}, the empirical decay rate seems to be independent of the problem size $N$.

\begin{figure}[H]
\begin{center}
\includegraphics[width=0.3\textwidth, trim=0cm 0cm 0cm 0cm]{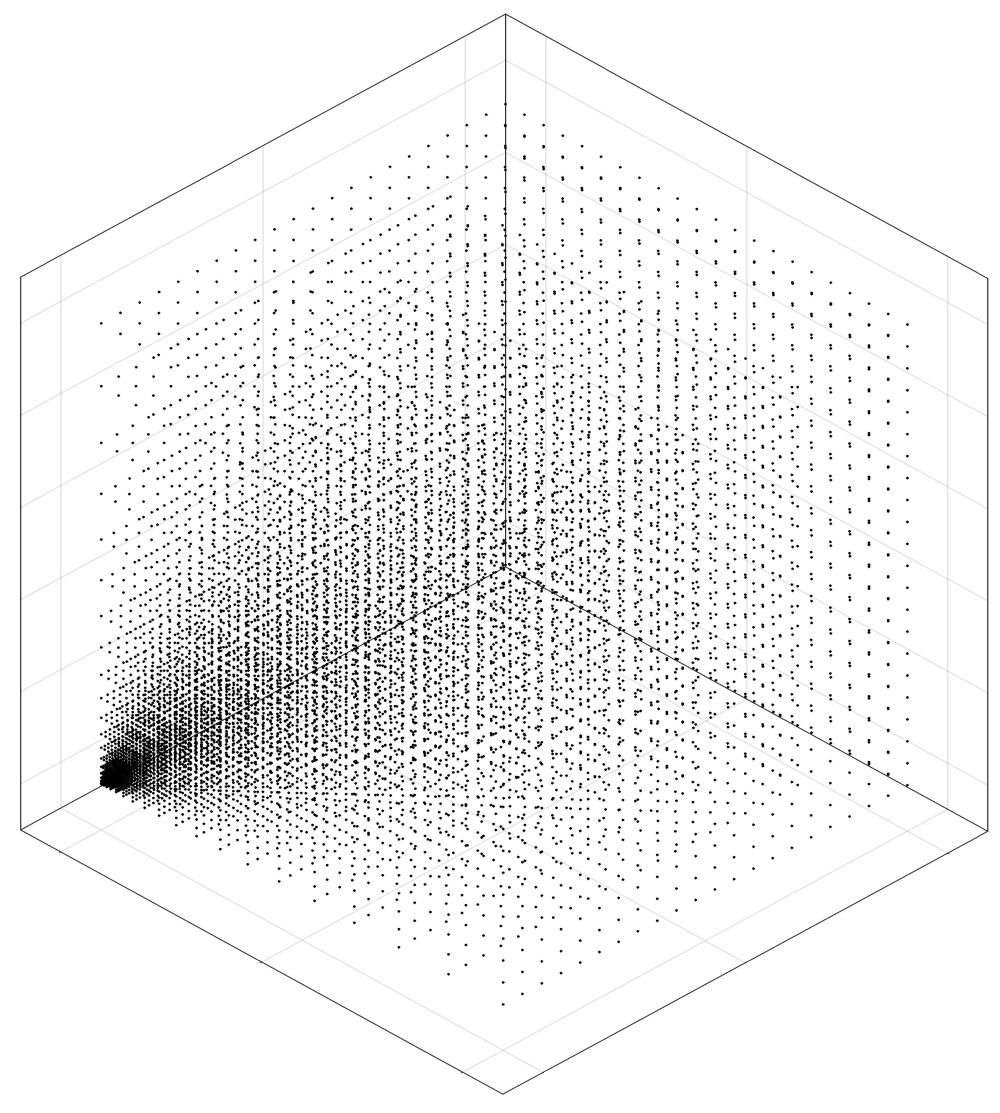} \quad \quad
\includegraphics[width=0.55\textwidth, trim=0cm 0cm 0cm 0cm]{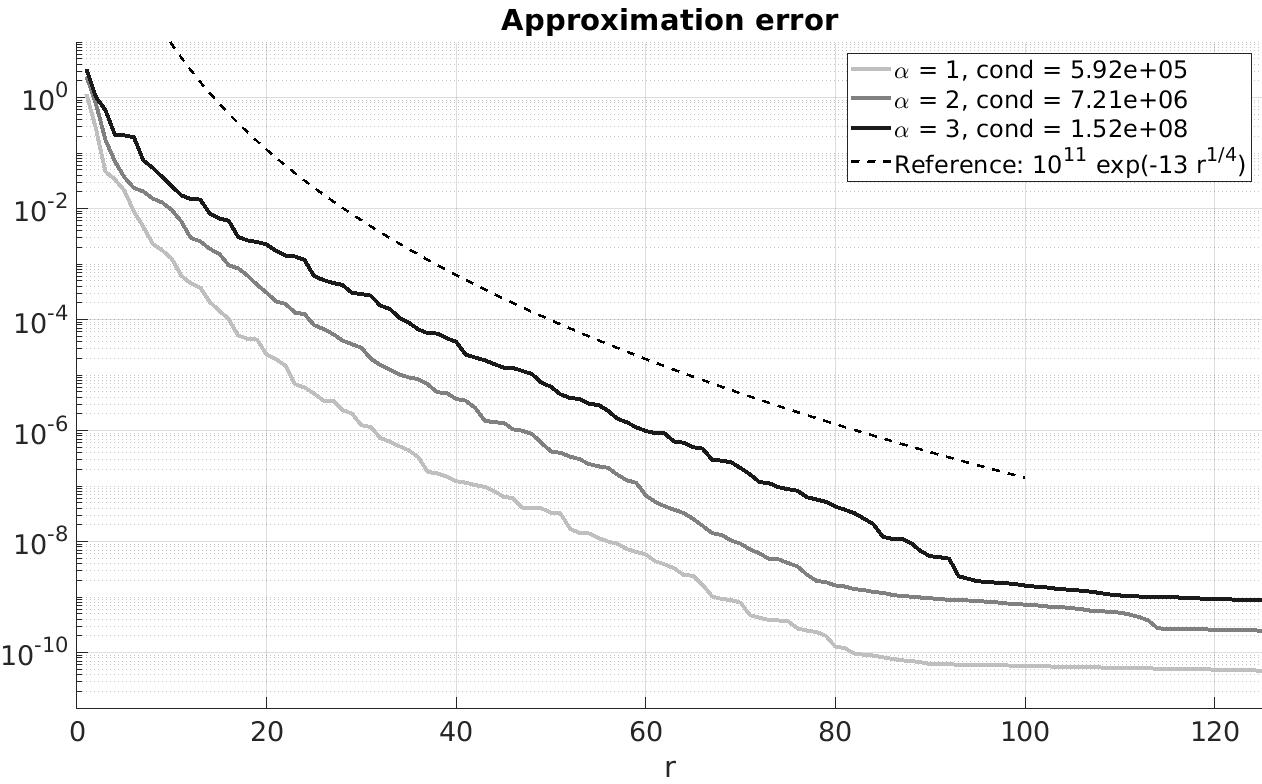}
\caption{Experimenting with an algebraically graded grid in 3D.}
\label{Figure_Exp_3D_graded}
\end{center}
\end{figure}

In \cref{Figure_Exp_3D_graded}, we investigated the influence of the grading exponent $\CCard$ from \cref{Interpol_points} on the error decay rate in $d=3$ space dimensions. We set $k=2$ and $k_{\min}=0$ and used $\phi(x) = e^{-\norm{x}}$ as the basis function. The error plot compares the cases $\CCard \in \set{1,2,3}$, where $\CCard = 1$ is a uniform grid and $\CCard=3$ is ``strongly`` graded towards the origin $0 \in \R^3$. The problem size $N \approx 10.000$ was held constant throughout all three runs. The plot suggests that the constant $\CExp$ from the error bound $\exp(-\CExp r^{1/4})$ in \cref{Main_result} is independent of the grading parameter $\CCard \in \set{1,2,3}$.


Finally, we perform experiments using the $\mathcal{H}$-arithmetic to approximate the inverse system matrix using the library \texttt{H2Lib}. As mentioned in the introduction, previous numerical results have established that the $\mathcal{H}$-matrix arithmetic is a viable tool for solving RBF interpolation problems, \cite{MR3779519,MR4190812,MR3952680}.
In the following, we compute a Cholesky-decomposition in the $\mathcal{H}$-matrix format by approximating the system matrix by an $\mathcal{H}$-matrix with good accuracy and then perform the Cholesky-algorithm using $\mathcal{H}$-arithmetic as described, e.g., in \cite{Bebendorf07}. Afterwards, we project the matrix to a prescribed rank $r$. 
Finally, an approximate inverse can be obtained by inverting the Cholesky factors.

However, a complication arises in the \emph{conditionally positive definite} case of polyharmonic splines as the main block of the system matrix is not invertible so that 
the Cholesky-decomposition cannot be computed. 
In fact, the system matrix has saddle point structure. Like Gaussian elimination, $\mathcal{H}$-matrix inversion of such matrices would require pivoting. 
To avoid pivoting it was advocated in \cite{BL12,MR3952680} to consider instead the augmented Lagrangian $\mvemph{A} + \gamma \mvemph{B}^T \mvemph{B}$ ($\gamma > 0$), which is SPD and therefore amenable to  an $\mathcal{H}$-matrix inversion.

\begin{figure}[H]
\begin{center}
\includegraphics[width=0.45\textwidth, trim=0cm 0cm 0cm 0cm]{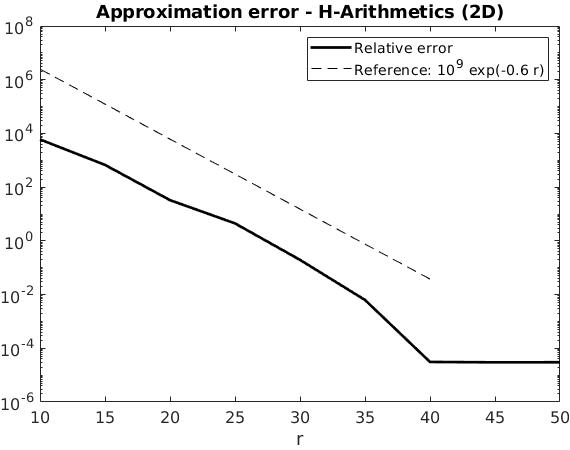} \quad \quad
\includegraphics[width=0.45\textwidth, trim=0cm 0cm 0cm 0cm]{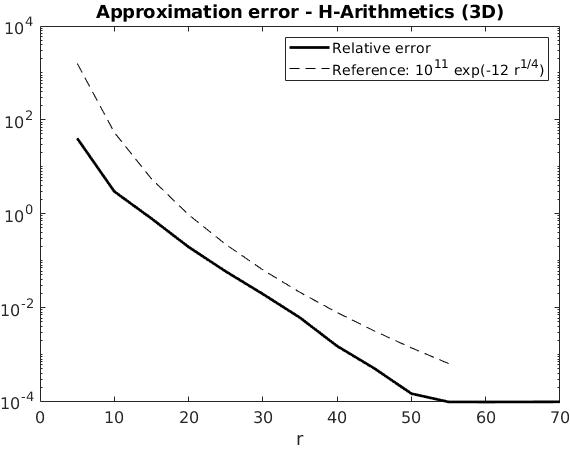}
\caption{Experiment using $\mathcal{H}$-arithmetics to approximate the inverse system matrix, left: 2D thin-plate splines, right: 3D Bessel potential.}
\label{Figure_H_arithmetic}
\end{center}
\end{figure}


In \cref{Figure_H_arithmetic}, we compute an approximate inverse using $\mathcal{H}$-arithmetics as described above for the case of thin-plate splines in space dimension $d=2$ (uniform point distribution in unit square, $N=10000$ $k=k_{\min}=2$, augmented Lagrangian approach with $\gamma=1$) and the Bessel potential in space dimension $d=3$ (uniform point distribution in unit cube, $N=4096$ $k=2$, $k_{\min}=0$).
In contrast to the previous examples performed in \texttt{MATLAB}, we do not compute the exact inverse matrix and perform an SVD to compute an upper bound of the absolute error, but rather use the error measure $\norm[2]{\mvemph{I}-(\mvemph{L}_{\mathcal{H}}\mvemph{U}_{\mathcal{H}})^{-1}\mvemph{A}}$, which is an upper bound for the \emph{relative error}.

Once again, we observe exponential convergence as predicted by our main result \cref{Main_result}. However, we mention that the error flattens out earlier than machine precision due to our method of computation, as the approximation of the initial stiffness matrix using interpolation determines the achievable accuracy.  

\bigskip 

{\bf Acknowledgements.}  NA was funded by the Austrian Science Fund (FWF) Project P 28367 and JMM was
supported by the Austrian Science Fund (FWF) by the special research program Taming complexity in
PDE systems (Grant SFB F65).
\bibliographystyle{amsalpha}
\bibliography{./References}

\end{document}